\documentclass[12pt,letterpaper,reqno]{amsart}
\usepackage{fullpage}
\usepackage{amsmath,amsthm,amssymb,amscd}
\usepackage{enumerate}
\usepackage{enumitem}
\usepackage{array}
\usepackage{float}
\usepackage{bbm}
\usepackage{bm}
\usepackage{stmaryrd}
\usepackage{mathrsfs}
\usepackage{comment}
\usepackage{mathtools}

\usepackage{hyperref}
\hypersetup{colorlinks=true,linkcolor=blue,citecolor=blue,urlcolor=blue}

\theoremstyle{plain}
\newtheorem{theorem}{Theorem}[section]
\newtheorem{proposition}[theorem]{Proposition}
\newtheorem{lemma}[theorem]{Lemma}
\newtheorem{corollary}[theorem]{Corollary}
\newtheorem{conjecture}[theorem]{Conjecture}

\mathtoolsset{showonlyrefs=true}
\numberwithin{equation}{section}
\numberwithin{figure}{section}
\numberwithin{table}{section}

\let\subsectiontemp\subsection
\renewcommand{\subsection}[1]{ 
    \subsectiontemp{#1} \hfill\vspace{0.5\linespacing} 
}

\allowdisplaybreaks[1]

\newcommand{\lrabs}[1]{\!\left\lvert #1 \right\lvert}
\newcommand{\lrp}[1]{\!\left(#1\right)}
\newcommand{\lrb}[1]{\!\left[#1\right]}

\newcommand{\ZZ}{\mathbb{Z}}

\newcommand{\new}{{\rm new}}
\newcommand{\nw}{{\rm new}}
\newcommand{\Tr}{\operatorname{Tr}}
\newcommand{\sgn}{\operatorname{sgn}}

\author[T. Nelson]{Timothy Nelson}
\address[T. Nelson]{School of Science, Technology and Health, Gordon College, Wenham, MA}

\author[E. Ross]{Erick Ross}
\address[E. Ross]{School of Mathematical and Statistical Sciences, Clemson University, Clemson, SC}
\email{erickjohnross@gmail.com}

\author[M. Wassercug]{Maya Wassercug}
\address[M. Wassercug]{Department of Mathematics, Northwestern University, Evanston, IL}

\author[H. Xue]{Hui Xue}
\address[H. Xue]{School of Mathematical and Statistical Sciences, Clemson University, Clemson, SC}
\email{huixue@clemson.edu}

\subjclass[2020]{Primary 11F25; Secondary 11F72 and 11F11.}
\keywords{Hecke operator; Hecke polynomial; Eichler-Selberg trace formula; Atkin-Lehner sign pattern.}

\title{Asymptotics of Hecke polynomial coefficients \\ on the Atkin-Lehner eigenspaces}

\begin{document}

\begin{abstract}
    Let $S_k^\sigma(N)$ denote the space of cusp forms of level $N$, weight $k$, and Atkin-Lehner sign pattern $\sigma$, and $S_k^{\mathrm{new},\sigma}$ denote its new subspace. In this paper, we study the asymptotic behavior of the coefficients of the $m$-th Hecke polynomial over $S_k^\sigma(N)$ and $S_k^{\mathrm{new},\sigma}(N)$. In particular, we show that in certain settings, all but finitely many of these coefficients take a particular sign. We also study settings in which the coefficients do not tend to any particular sign.
\end{abstract}

\maketitle

\tableofcontents


\section{Introduction}
For $N \ge 1$ and $k \ge 2$ even, let $S_k(N)$ denote the space of cusp forms of weight $k$ and level $N$. For $m \ge 1$ coprime to $N$, let $ T_m'(N,k) = m^{-(k-1)/2} \,T_m(N,k)$ denote the normalized $m$-th Hecke operator over $S_k(N)$, and write the characteristic polynomial of $T_m'(N,k)$ as 
\begin{align}
    T'_m(N,k)(x) = \sum_{0 \le r \le d} c_r(m,N,k)x^{d-r} \qquad \text{where } d=\dim S_k(N).
\end{align}
For simplicity, we will often just write $T_m'(N,k)$ as $T_m'$ and $c_r(m,N,k)$ as $c_r$. Additionally, we note that the coefficients $c^\nw_r(m,N,k)$ can be defined in the same way over the newspace $S_k^\nw(N)$. 

These coefficients $c_r(m,N,k)$ encode important information about $S_k(N)$. For example, the dimension of $S_k(N)$ is given by $-c_1(1,N,k)$, the determinant of $T_m'$ over $S_k(N)$ is given by $(-1)^d c_d(m,N,k)$, and the trace of $T_m'$ over $S_k(N)$ is given by $-c_1(m,N,k)$ (which has historically been especially important in the theory of modular forms). 
One important aspect of studying the trace is to determine whether or not it vanishes. For example, Lehmer's conjecture asserts that $c_1(m,1,12) \ne 0$ for all $m \ge 1$. More generally, Rouse \cite{rouse} studied the vanishing and non-vanishing behavior of $c_1(m,N,k)$, and obtained several interesting results for $N$ and $k$ large. He also posed the generalized Lehmer conjecture \cite[Conjecture 1.5]{rouse}: that $c_1(m,N,k) \ne 0$ for $m$ not a square and $k = 12$ or $k \ge 16$. Rouse himself proved this conjecture for $m=2$. The case when $m=3$ and $N=1$ was established in \cite{chiriac-williams}. More recently, the second coefficient has been studied extensively (including by the second and fourth authors); \cite{CLAYTON2024186}, \cite{Ross-Xue-second}, and \cite{cason-jim-medlock-ross-vilardi-xue} all studied the non-vanishing of $c_2(m,N,k)$, and \cite{CLAYTON-non-rep} studied the non-repetition of $c_2(m,N,k)$. For the case of general coefficients, \cite{ross-xue} determined the asymptotic behavior of $c_r(m,N,k)$. In particular, for even $r$, this resolved \cite[Conjecture 5.1]{CLAYTON2024186} (which predicts that $c_r(m,1,k) \ne 0$) in all but finitely many cases.

In this paper, we extend the above works to study the Hecke polynomial coefficients over the Atkin-Lehner sign-pattern subspaces $S_k^\sigma(N)$ and $S_k^{\nw,\sigma}(N)$. We first recall the definition of these subspaces in the following paragraph.

For $Q \parallel N$, let $W_Q$ denote the $Q$-th Atkin–Lehner involution on $S_k(N)$. The $W_Q$ involutions simultaneously diagonalize $S_k(N)$ with eigenvalues $\pm 1$. 
So define a \textit{sign pattern} $\sigma : \{ Q \parallel N\} \to \{\pm 1\}$ to be a multiplicative function on the exact divisors of $N$. Equivalently, $\sigma$ can be thought of as a character on the abelian group generated by the $W_Q$.
Then $S_k(N)$ can be decomposed into a direct sum of eigenspaces
\begin{align}
    S_k(N) = \bigoplus_\sigma S_k^\sigma(N), \qquad  S_k^\sigma(N) := \{f \in S_k(N) : W_{Q}f = \sigma(Q)f \text{ for each } Q \mid\mid N\}.
\end{align}
Here, the direct sum runs over all sign patterns $\sigma$ for $N$. Note that these sign pattern spaces $S_k^\sigma(N)$ are a finer decomposition of the more well-known Fricke sign spaces $S_k^\pm(N)$ (which come from only specifying the sign of $W_N$). Finally, we note that one can also define the sign pattern newspaces $S_k^{\nw,\sigma}(N)$ in an identical way.

We now outline the main results of this paper. In the following discussion, we will focus on the full space $S_k^\sigma(N)$. However, as discussed in Section \ref{sec:extending-to-newspace}, all of our main results can be extended to the newspace $S_k^{\nw,\sigma}(N)$ as well. 

Following the technique developed in \cite{ross-xue}, we first determine the asymptotic behavior of the $r$-th coefficient $c_r(m, N, k,\sigma)$ as $N+k \to \infty$ when $m$ is a square.
\begin{theorem} \label{thm:main-thm-square}
    Fix an integer $r \geq 0$ and a square $m \geq 1$. Then for $N$ coprime to $m$, sign patterns $\sigma$ for $N$, and $k \geq 2$ even, we have that
    $$c_r(m, N, k,\sigma) = \frac{(-1)^r}{r!} \left( \frac{1}{\sqrt{m}} \frac{k - 1}{12} \frac{\psi(N)}{2^{\omega(N)}} \right)^r + O_m\left(k^{r - 1}N^{r-1/2+\varepsilon}\right).$$
\end{theorem}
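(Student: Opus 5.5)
The plan is to follow \cite{ross-xue} and express $c_r$ through power-sum traces via Newton's identities. Write $p_j = \Tr\big((T_m')^j\big)$ on $S_k^\sigma(N)$. Then
\[
c_r = \sum_{\lambda \vdash r} \prod_i \frac{(-1)^{\ell_i}\, p_{i}^{\ell_i}}{\ell_i!\, i^{\ell_i}},
\]
where $\ell_i$ is the number of parts of $\lambda$ equal to $i$. The term $\lambda = (1^r)$ gives $\frac{(-1)^r}{r!}\,p_1^r$, so the theorem follows once we establish two estimates: $p_1$ equals the stated main term up to an error of $O_m(N^{1/2+\varepsilon})$, and every $p_j$ is $O_m(kN^{1+\varepsilon})$. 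Indeed, every other partition has at most $r-1$ parts, so its contribution is $O(k^{r-1}N^{r-1+\varepsilon})$. Expanding $p_1^r$ about its main term $M \asymp kN^{1+\varepsilon}$ gives an error of $O(M^{r-1}kN^{1/2+\varepsilon}) = O(k^{r}N^{r-1/2+\varepsilon})$. To get the claimed $k^{r-1}$ we need the error in $p_1$ to be $O(N^{1/2+\varepsilon})$ with no factor of $k$, or at least $O(k^0)$ times a power of $N$. We also need the $p_j$ bound.

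First we express traces on the sign-pattern space. The projector onto $S_k^\sigma(N)$ is $2^{-\omega(N)}\sum_{Q\parallel N}\sigma(Q)W_Q$, so
\[
p_j = 2^{-\omega(N)}\sum_{Q \parallel N} \sigma(Q)\,\Tr\big(T_m'^{\,j} W_Q\big).
\]
Because $m$ is coprime to $N$, the operator $T_m^j$ is a linear combination of $T_n$ with $n \mid m^j$. Moreover $n=m^j$ appears with coefficient $1$, and the square divisors $n$ with $\sqrt n$ coprime to $N$ are also present. We then use the Eichler--Selberg trace formula and its generalization for $\Tr(T_nW_Q)$ (Yamauchi, Skoruppa--Zagier), which I assume is stated earlier in the paper.

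For $Q=1$ the identity term contributes $\frac{k-1}{12}\psi(N)\,n^{k/2-1}$ when $n$ is a square. After normalization by $n^{-(k-1)/2}$ this becomes $\frac{k-1}{12}\psi(N)\,n^{-1/2}$. For $j=1$ the $Q=1$ term yields the main term $\frac{1}{\sqrt m}\frac{k-1}{12}\frac{\psi(N)}{2^{\omega(N)}}$. The remaining terms of $\Tr(T_n)$ need to be bounded. The elliptic (class number) terms are $O_n(N^{\varepsilon})$ times bounded normalized Gegenbauer factors $|p_k(t,n)|\,n^{-(k-1)/2} \le 1$, and the number of local solutions is $O(N^{\varepsilon}\cdot 2^{\omega(N)})$. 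The hyperbolic and parabolic terms are bounded by $\sum_{d\mid n}\sum_{c\mid N}\varphi(\gcd(c,N/c))$, which is $O_n(N^{1/2+\varepsilon})$. All of these are independent of $k$ up to $O(1)$.

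For $Q>1$ the trace of $T_nW_Q$ has no identity term, since $W_Q$ is not scalar. Its formula is a sum over $t$ with $t^2-4nQ<0$, weighted by class numbers $H$ of discriminants of size $O_n(Q)$. This gives $O_n(Q^{1/2+\varepsilon})\le O_n(N^{1/2+\varepsilon})$, again uniformly in $k$ after normalization. There is also a possible $k$-independent correction term. Summing over the $2^{\omega(N)}=O(N^\varepsilon)$ choices of $Q$ preserves these bounds. Hence $p_1 = \text{main} + O_m(N^{1/2+\varepsilon})$. The same argument gives $p_j = O_m(kN^{1+\varepsilon})$, or more simply the trivial bound $|p_j| \le \dim S_k(N)\cdot d(m)^j$ from Deligne.

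The main obstacle is obtaining a bound on $\Tr(T_nW_Q)$ that is uniform in $k$, $N$ and $Q$, with the required $N^{1/2+\varepsilon}$ saving. This requires a careful count of the local terms: the number of solutions modulo $N$ in the trace formula for $W_Q$, and the class number bound $H(D)\ll |D|^{1/2}\log|D|$ applied with $|D|\ll nQ$. I would first write out the Skoruppa--Zagier formula and bound each term crudely, checking that the solution counts grow only like $N^{\varepsilon}$.
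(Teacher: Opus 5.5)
Your argument is correct and is essentially the paper's: the closed-form partition expansion of $c_r$ is equivalent to the paper's strong induction on the Newton--Girard recursion, and the two inputs you isolate (a $k$-uniform $O_m(N^{1/2+\varepsilon})$ error in $p_1$ from the projector $2^{-\omega(N)}\sum_{Q\parallel N}\sigma(Q)W_Q$ and the Skoruppa--Zagier formula, and $p_j\ll_m kN^{1+\varepsilon}$ from Deligne's bound) are exactly Lemma~\ref{lem:pj-bound-square}, which rests on Proposition~\ref{prop:final-trace-formula}. One caution for carrying out that trace estimate: in Skoruppa--Zagier form, each building block $s'_{k,N'}(m,(Q,N'))$ contains a $k$-growing $\Delta=0$ term $\frac{1}{\sqrt m}\frac{k-1}{12}\frac{N'}{(Q,N')}$ even when $Q>1$, so bounding term by term would fail and you need the exact cancellation $\sum_{N'\mid N,\ N/N'\text{ squarefree}}\mu\bigl(\frac{Q}{(Q,N')}\bigr)\frac{N'}{(Q,N')}=\psi(N)\mathbbm{1}_{Q=1}$; moreover, the $N^{1/2+\varepsilon}$ size of the class-number terms comes from the constraint $Q'\mid s$, not just from the size of the discriminants, since for each $Q'\mid Q$ this constraint leaves only the $s=0$ term $H_{N/Q}(-4mQ')\ll_m\sqrt{N}\log N$ plus $O(\sqrt{m/Q'})$ nonzero $s$ of total size $O_m(1)$.
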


In the case when $m$ is not a square, we determine the asymptotic behavior of $c_r=c_r(m,N,k,\sigma)$ as $k \to \infty$. 
\begin{theorem} \label{thm:main-thm-nonsquare}
    Fix an integer $r \geq 0$, a non-square $m\geq 0$, and $N\geq 1$ coprime to $m$. Then for sign patterns $\sigma$ for $N$ and $k \geq 2$ even, we have that
    \begin{align*}
        c_{2r} &= \frac{(-1)^r}{(2r)!!} \left(\frac{\sigma_1(m)}{m} \frac{k-1}{12} \frac{\psi(N)}{2^{\omega(N)}}
        \right)^r + O_{m,N}\left(k^{r-1}\right) \quad \text{ and} \\
        c_{2r+1} &= c_1 \cdot \frac{(-1)^r}{(2r)!!} \left(\frac{\sigma_1(m)}{m} \frac{k-1}{12} \frac{\psi(N)}{2^{\omega(N)}}\right)^r + O_{m,N}\left(k^{r-1}\right).
    \end{align*}
\end{theorem}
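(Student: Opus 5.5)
The plan follows the Newton-identity method of \cite{ross-xue}. Write $p_j = \Tr(T_m'^{\,j} \mid S_k^\sigma(N))$ for the power sums of the eigenvalues of $T_m'$ on $S_k^\sigma(N)$. Newton's identities give
\begin{align}
    c_r = \frac{(-1)^r}{r!} \sum_{\substack{\lambda \vdash r}} \operatorname{sgn}(\lambda)\, \frac{r!}{z_\lambda} \prod_{i} p_{\lambda_i},
\end{align}
that is, $c_r$ is the degree-$r$ polynomial in $p_1,\dots,p_r$ coming from $\exp\bigl(-\sum_j p_j x^j/j\bigr)$. So everything reduces to the sizes of the $p_j$. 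By Hecke multiplicativity, $T_m'^{\,j}$ is a linear combination $\sum_{d} a_{j,d}\, T'_{m^j/d^2}$ with bounded integer coefficients. Hence $p_j$ is a bounded combination of traces $\Tr(T_n' \mid S_k^\sigma(N))$ with $n \mid m^j$.

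The key input is the trace of $T_n' \circ W_Q$, averaged over sign patterns:
\begin{align}
    \Tr(T_n' \mid S_k^\sigma(N)) = 2^{-\omega(N)} \sum_{Q \parallel N} \sigma(Q)\, \Tr(T_n' W_Q \mid S_k(N)).
\end{align}
We use the Eichler--Selberg trace formula for $Q=1$, and Yamauchi/Skoruppa--Zagier type formulas for $Q>1$. The dominant term of each formula is then identified. For $Q=1$ and $n$ a square, the identity-type term gives $\frac{k-1}{12}\psi(N)\, n^{-1/2}$. For $n$ a nonsquare this term is absent. The $Q>1$ terms are class-number sums that do not grow with $k$; after normalization they are $O_{n,N}(1)$, and for fixed $n$ and uniformly in $N$ they are $O(N^{1/2+\varepsilon})$. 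Hence, with $M := \frac{k-1}{12}\frac{\psi(N)}{2^{\omega(N)}}$,
\begin{align}
    \Tr(T_n' \mid S_k^\sigma(N)) = \delta_{n=\square}\, n^{-1/2} M + O_{n}\bigl(N^{1/2+\varepsilon}\bigr),
\end{align}
where the error is $O_{n,N}(1)$ when $N$ is fixed.

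For $m$ a nonsquare, $m^j/d^2$ is a square only when $j$ is even. So $p_j = O_{m,N}(1)$ for odd $j$, while $p_2$ picks up the main term from $T_{m^2}' $ and from each $T_{m^2/d^2}'$ with $d \mid m$. The identity $T_m'^{\,2} = \sum_{d\mid m} T'_{m^2/d^2}$ (valid since $(m,N)=1$) gives
\begin{align}
    p_2 = \sum_{d\mid m} \frac{d}{m}\, M + O(1) = \frac{\sigma_1(m)}{m} M + O(1).
\end{align}
The $p_{2i}$ with $i\ge 2$ are also $O(k)$, and $p_1 = -c_1 = O(1)$. In the Newton expansion of $c_{2r}$, the only monomial of degree $r$ in $k$ is $p_2^r$. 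Its coefficient is $\frac{(-1)^{r}}{2^r r!}$, up to the sign convention, and $2^r r! = (2r)!!$. Every other monomial has $k$-degree at most $r-1$, which gives the first formula. For $c_{2r+1}$, the top-degree monomials are $p_1 p_2^r$; their coefficient from $\exp$ is $-\frac{(-1)^r}{2^r r!}$, and multiplying by $p_1=-c_1$ gives the stated form. The case $m$ a square works the same way, except that now $p_1 \sim M/\sqrt m$ dominates. Only $p_1^r$ attains total degree $r$ in $kN$; the other terms lose at least a factor $kN^{1/2-\varepsilon}$, which is what gives the error in Theorem~\ref{thm:main-thm-square}.

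The main obstacle is the uniform bound on the $W_Q$-twisted traces, especially the $N$-uniformity needed for the square case. This requires careful use of the explicit Atkin--Lehner trace formula, including local counts of solutions to the relevant quadratic congruences modulo $N/Q$ and class number bounds $H(\cdot)\ll D^{1/2+\varepsilon}$. The cross terms in the averaging over $\sigma$ must then be checked not to exceed $N^{1/2+\varepsilon}$. Once that bound is in hand, the combinatorics is identical to \cite{ross-xue}.
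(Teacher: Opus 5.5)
Your proposal is correct and is essentially the paper's argument. It feeds the same estimates into the Newton--Girard identities: $p_1,p_3=O_{m,N}(1)$, $p_2=\frac{\sigma_1(m)}{m}\frac{k-1}{12}\frac{\psi(N)}{2^{\omega(N)}}+O_{m,N}(1)$ and $p_j=O_{m,N}(k)$, obtained from Proposition~\ref{prop:final-trace-formula} and the Hecke composition formula. The only difference is bookkeeping: you read off the unique top-$k$-degree monomials $p_2^r$ and $p_1p_2^r$ from the closed-form expansion $\exp(-\sum_j p_jx^j/j)$, whereas the paper runs a strong induction on the recursion $c_r=-\frac{1}{r}\sum_{j} c_{r-j}p_j$.
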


Note that in particular, these two results determine the asymptotic sign of $c_r(m,N,k,\sigma)$ in the following settings.

\begin{corollary} \label{cor:coef-sign-square}
    Fix an integer $r \geq 0$ and a square $m \geq 1$. Then $c_r(m, N, k, \sigma)$ has sign $(-1)^r$ for all but finitely many triples $(N, k,\sigma)$.
\end{corollary}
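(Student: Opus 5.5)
This follows directly from Theorem \ref{thm:main-thm-square}. The plan is to show that the main term dominates the error term whenever $N+k$ is large, and then to count the finitely many remaining triples.

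\textbf{Case $r=0$.} Here $c_0=1$, which has sign $(-1)^0$ for every triple, so there is nothing to prove. Assume from now on that $r\ge 1$.

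\textbf{Lower bound for the main term.} Write
$$M(N,k)=\frac{1}{\sqrt m}\,\frac{k-1}{12}\,\frac{\psi(N)}{2^{\omega(N)}}.$$
By Theorem \ref{thm:main-thm-square},
$$c_r=\frac{(-1)^r}{r!}M^r+E, \qquad |E|\le C_{m,r,\varepsilon}\,k^{r-1}N^{r-1/2+\varepsilon}.$$
Since $\psi(N)\ge N$ and $2^{\omega(N)}=d(N)^{\,\text{(on squarefree part)}}\le d(N)\ll_\varepsilon N^{\varepsilon}$, we get $\psi(N)/2^{\omega(N)}\gg_\varepsilon N^{1-\varepsilon}$. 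Also $k-1\ge k/2$ for $k\ge 2$. Hence
$$M^r\gg_{m,r,\varepsilon} k^r N^{r-r\varepsilon}.$$

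\textbf{Comparing with the error.} The ratio satisfies
$$\frac{|E|}{M^r/r!}\ll_{m,r,\varepsilon}\frac{k^{r-1}N^{r-1/2+\varepsilon}}{k^rN^{r-r\varepsilon}}=k^{-1}N^{-1/2+(r+1)\varepsilon}.$$
Choose $\varepsilon=1/(4(r+1))$. The ratio is then $\ll k^{-1}N^{-1/4}$. This quantity is below $1$ unless both $k$ and $N$ are bounded by an explicit constant depending only on $m$ and $r$. Indeed, since $k\ge 2$ and $N\ge 1$, we have $k^{-1}N^{-1/4}\le \min(k^{-1},N^{-1/4})$, so largeness of either $k$ or $N$ suffices. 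Outside that bounded box, $c_r\neq 0$ and $c_r$ has the sign of its main term, namely $(-1)^r$.

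\textbf{Counting the exceptions.} The exceptional pairs $(N,k)$ form a finite set. For each $N$ there are only $2^{\omega(N)}$ sign patterns $\sigma$, since $\sigma$ is determined by its values on the prime-power exact divisors of $N$. So the number of exceptional triples $(N,k,\sigma)$ is finite.

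\textbf{Main obstacle.} The only subtle point is that the error term is uniform in $\sigma$ while the main term carries the $2^{\omega(N)}$ loss. We therefore need the divisor-type bound $2^{\omega(N)}\ll N^{\varepsilon}$ to absorb this loss into the spare $N^{1/2}$ saving. That saving comes from $N$ being large, and it is still available even when $k$ stays small.
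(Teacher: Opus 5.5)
Your proposal is correct and is the same argument as the paper's, which deduces the corollary immediately from Theorem~\ref{thm:main-thm-square} and the bound $\psi(N)/2^{\omega(N)} \gg N^{1-\varepsilon}$; you obtain that bound from $\psi(N)\ge N$ and $2^{\omega(N)}\le \sigma_0(N)\ll N^{\varepsilon}$. You make explicit what the paper leaves implicit: the choice of $\varepsilon$, the bounded box of exceptional pairs $(N,k)$, and the fact that each $N$ has only $2^{\omega(N)}$ sign patterns.
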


\begin{corollary} \label{cor:even-coef-sign-nonsquare}
    Fix an integer $r \geq 0$, a non-square $m\geq 1$, and $N\geq 1$ coprime to $m$. Then for each sign pattern $\sigma$ for $N$, $c_{2r}(m,N,k,\sigma)$ has sign $(-1)^r$ for all but finitely many values of $k$.
\end{corollary}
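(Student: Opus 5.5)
This follows directly from Theorem \ref{thm:main-thm-nonsquare}, since $m$, $N$ and $r$ are all fixed. Set
$$A := \frac{\sigma_1(m)}{m}\cdot\frac{\psi(N)}{12\cdot 2^{\omega(N)}} > 0 .$$
This constant depends only on $m$ and $N$, and it is strictly positive because $\sigma_1(m)\ge 1$ and $\psi(N)\ge 1$. By the first display of Theorem \ref{thm:main-thm-nonsquare},
$$c_{2r}(m,N,k,\sigma) = \frac{(-1)^r}{(2r)!!}\,A^r (k-1)^r + E(k), \qquad |E(k)| \le C\,k^{r-1}$$
for all even $k\ge 2$ and all sign patterns $\sigma$ for $N$. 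Here $C$ depends only on $m$, $N$ and $r$. It is uniform in $\sigma$, because Theorem \ref{thm:main-thm-nonsquare} states the error as $O_{m,N}$ with no dependence on $\sigma$.

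The main term has exact sign $(-1)^r$. Its absolute value is $A^r(k-1)^r/(2r)!! \ge A^r (k/2)^r/(2r)!!$ for $k\ge 2$. The sign of $c_{2r}$ is therefore $(-1)^r$ as soon as
$$\frac{A^r k^r}{2^r (2r)!!} > C k^{r-1}, \qquad\text{i.e.}\qquad k > \frac{2^r (2r)!!\, C}{A^r}.$$
This excludes only finitely many even $k$, for each $\sigma$ and in fact uniformly over all $\sigma$. That is the claim.

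The case $r=0$ needs separate treatment, because the error bound $O(k^{-1})$ is degenerate there. It is trivial anyway: $c_0=1$ is the leading coefficient of the characteristic polynomial, and it has sign $(-1)^0=+1$.

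There is also an edge case: $S_k^\sigma(N)$ could have dimension $d<2r$, in which case $c_{2r}$ is zero by convention. But $d=-c_1(1,N,k,\sigma)$, and by Theorem \ref{thm:main-thm-square} with $m=1$ it grows linearly in $k$. So this happens for only finitely many $k$, and the asymptotic argument above already covers it.

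There is no real obstacle here. The only point needing care is that $A>0$ and that the implied constant in the error term does not depend on $k$.
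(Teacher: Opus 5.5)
Your proposal is correct and matches the paper's argument. The paper reads the corollary off Theorem \ref{thm:main-thm-nonsquare} exactly as you do: for fixed $r\ge 1$, $m$, $N$, the main term $\frac{(-1)^r}{(2r)!!}A^r(k-1)^r$ has sign $(-1)^r$ and dominates the $O_{r,m,N}(k^{r-1})$ error for large $k$, while $r=0$ is trivial. Your side remarks are valid but inessential. Uniformity in $\sigma$ is automatic because a fixed $N$ has only $2^{\omega(N)}$ sign patterns, and the finitely many $k$ with $\dim S_k^\sigma(N)<2r$ are harmless.
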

We would like to point out here that all of our work is effective. In fact, in Propositions \ref{prop:second-coef-sign-square} and \ref{prop:second-coef-sign-nonsquare}, we will write down an explicit bound for how large $N$ and $k$ need to be in order for the $c_2(m,N,k,\sigma)$ sign predictions of Corollaries \ref{cor:coef-sign-square} and \ref{cor:even-coef-sign-nonsquare} to hold.

Now, in light of Theorem \ref{thm:main-thm-square} and \cite[Corollary 4.2]{ross-xue}, it is natural to guess that the coefficient $c_{2r}(m,N,k,\sigma)$ should have sign $(-1)^r$ asymptotically in $N$ as well as in $k$ (which would follow from an improvement of Theorem \ref{thm:main-thm-nonsquare} to also cover asymptotic behavior in $N$). Surprisingly, however, this turns out to not be true; we construct two infinite families of pairs $(N,\sigma)$ such that the corresponding second Hecke coefficients have opposing signs. This is in stark contrast to the behavior of the Hecke coefficients over the non-sign pattern spaces $S_k(N)$ and $S_k^\nw(N)$; compare with
\cite[Proposition 4.10]{CLAYTON2024186},
\cite[Theorem 1.2]{Ross-Xue-second}, \cite[Theorem 1.1]{cason-jim-medlock-ross-vilardi-xue}, and \cite[Corollary 4.2]{ross-xue}.
\begin{theorem} \label{thm:counterexample}
    Fix a non-square $m\geq1$, and an even integer $k\geq 2$. Then there exists an infinite family of pairs $(N,\sigma)$ such that $\sgn c_{2}(m,N,k,\sigma) = +1$, and an infinite family of pairs $(N,\sigma)$ such that $\sgn c_2(m,N,k,\sigma) = -1$. 
\end{theorem}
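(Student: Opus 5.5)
The plan is to reduce $c_2$ to traces and exhibit a level family in which the two competing terms have the same order of magnitude, with a ratio controlled by a class number that we can make large or small at will.

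\textbf{Step 1: write $c_2$ in terms of traces.} By Newton's identity,
$$c_2 = \tfrac12\big(c_1^2 - \Tr(T_m'^2)\big),$$
where all traces are over $S_k^\sigma(N)$. From the Hecke relations, $T_m'^2 = \sum_{d\mid m} T'_{m^2/d^2}$, so every index occurring in $\Tr(T_m'^2)$ is a square. Applying the square case, as in the $r=1$ analysis behind Theorem \ref{thm:main-thm-square}, gives
$$\Tr(T_m'^2) = \frac{\sigma_1(m)}{m}\frac{k-1}{12}\frac{\psi(N)}{2^{\omega(N)}} + O_m\big(kN^{1/2+\varepsilon}\big),$$
which is of exact order $N/2^{\omega(N)}$ for fixed $k$.

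\textbf{Step 2: isolate the dominant part of $c_1$.} Write the sign-pattern projector as $2^{-\omega(N)}\sum_{Q\parallel N}\sigma(Q)W_Q$. Then
$$c_1 = -2^{-\omega(N)}\sum_{Q\parallel N}\sigma(Q)\Tr\big(T_m' W_Q\big).$$
I would restrict to prime levels $N=p\nmid m$ with $p>4m$, where there are only two terms, $Q=1$ and $Q=p$.

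For $Q=1$ and non-square $m$, the Eichler--Selberg formula gives $\Tr(T_m')=O_{m,k}(1)$ uniformly in $p$. The only $p$-dependence is through local factors, which are bounded because the class numbers involve discriminants $t^2-4m$ independent of $p$.

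For $Q=p$, I would use the Yamauchi / Skoruppa--Zagier trace formula for $T_mW_p$. When $p>4m$, only $t=0$ survives in the sum over $t\equiv 0 \pmod p$ with $t^2<4mp$. Up to an explicit sign and a bounded factor depending on $m,k$, this gives
$$\Tr(T_m'W_p) \approx c_{m,k}\,H(4mp),$$
plus $O_{m,k}(1)$. Hence
$$c_1^2 = \tfrac14 c_{m,k}^2\,H(4mp)^2 + O\big(H(4mp)\big).$$
Both $c_1^2$ and $\Tr(T_m'^2)$ are then of order $p$, since $H(4mp)\asymp \sqrt{p}\,L(1,\chi_{-4mp})$ up to bounded genus factors. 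Consequently
$$\sgn c_2 = \sgn\big(A_{m,k}\,L(1,\chi_{-4mp})^2 - B_{m,k}\big) + o(1)$$
for explicit positive constants $A_{m,k},B_{m,k}$. The choice of $\sigma(p)$ does not matter here, since $c_1$ is squared.

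\textbf{Step 3: produce the two families.} It remains to find infinitely many primes $p$ with $L(1,\chi_{-4mp})$ larger than $\sqrt{B/A}+\delta$, and infinitely many with it smaller than $\sqrt{B/A}-\delta$.

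For the first family, fix a large finite set of small primes $q\nmid 2m$. Using Dirichlet's theorem and CRT, impose congruence conditions on $p$ so that $\chi_{-4mp}(q)=+1$ for all these $q$. For the second family, impose conditions so that $\chi_{-4mp}(q)=-1$ for all of them.

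The Euler product is then dominated by these prescribed small primes. To control the tail uniformly, I would use either the P\'olya--Vinogradov / Burgess bound on $\sum_{n>x}\chi(n)/n$, or the fact that $\prod_{q\le y}(1-\chi(q)/q)^{-1}$ approximates $L(1,\chi)$ for all but a density-zero set of moduli. The latter suffices because we only need infinitely many primes. With enough primes $q$, the product can be pushed arbitrarily large (like $\log y$) or arbitrarily small (like $1/\log y$).

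\textbf{Main obstacle.} I expect Step 3 to be the main obstacle: making the tail control of $L(1,\chi_{-4mp})$ rigorous along a congruence-restricted family of primes, and doing it effectively. A secondary difficulty is pinning down the exact constants $c_{m,k}$ in the $W_p$ trace formula, including the sign and normalization of $P_k(0,m)=(-m)^{(k-2)/2}$ and the genus factors relating $H(4mp)$ to $h(-4mp)$. For the argument itself, however, only the nonvanishing of $c_{m,k}$ is needed.
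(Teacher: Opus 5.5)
Your Steps 1 and 2 are correct and match the paper's reduction. For $N=p$, the paper obtains $c_2=\frac{1}{32m}H(4mp)^2-\frac{\sigma_1(m)}{m}\frac{k-1}{48}(p+1)+O_m(p^{1/2+\varepsilon})$, with your $c_{m,k}=(-1)^{k/2}/(2\sqrt m)$.

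The gap is in Step 3, at the sentence ``the Euler product is then dominated by these prescribed small primes.'' Prescribing $\chi(q)$ for a fixed finite set of primes $q\le y$ says nothing about $\chi(\ell)$ for $\ell>y$, and neither tool you cite controls those values.
\begin{itemize}
\item P\'olya--Vinogradov and Burgess only reduce $L(1,\chi)$ to a partial sum $\sum_{n\le p^{\theta}}\chi(n)/n$ with fixed $\theta>0$. That sum involves every prime up to $p^{\theta}$.
\item The ``almost all characters'' approximation (Lemma \ref{lem:estimate-L-1-chi}) needs the Euler product up to $(\log Q)^{14}$, a length that grows with $p$. The unprescribed segment $y<\ell\le(\log Q)^{14}$ can a priori contribute a factor as small as $\asymp \log y/\log\log p$, or as large as its reciprocal. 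For fixed $y$ and $p\to\infty$ this swamps the factor $\log y$ you gained.
\end{itemize}
Knowing that the exceptional set has density zero does not help by itself. Nothing in your argument prevents every non-exceptional $p$ in your residue class from having an unfavourable middle segment.

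The missing idea in the paper is to let the prescribed range grow with the family while keeping $p$ small.
\begin{itemize}
\item For each $x$, impose $\chi_{\Delta p}(\ell)=\epsilon$ for all $\ell\le x$ with $\ell\nmid 4m$. Here $-4m=\Delta F^2$, and the condition is imposed via residue classes modulo $M_x=4m\prod_{\ell\le x,\,\ell\nmid 4m}\ell$, so that $\log M_x\sim x$.
\item Take the least prime in each admissible class. Linnik's theorem (Heath-Brown) gives $p\le M_x^6$, hence $\log\log p\le \log x+O(1)$.
\item By Mertens, the unprescribed range then costs only $\prod_{x<\ell\le(\log Q_x)^{14}}(1-1/\ell)\asymp \log x/\log\log Q_x\asymp 1$.
\item The number of admissible classes, $\prod(\ell-1)/2=e^{(1+o(1))x}$, exceeds the exceptional-set size $Q_x^{2/13}=e^{(12/13+o(1))x}$. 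So some $p_x$ is non-exceptional, which gives $L(1,\chi_{\Delta p_x})^{\pm1}\gg_m\log x\gg\log\log p_x$.
\end{itemize}

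Alternatively, you could keep your fixed-$y$ setup and add a variance argument. Siegel--Walfisz applies to moduli $M\ell\ell'$ up to a power of $\log X$. Using it, show that $S_p=\sum_{y<\ell\le(\log X)^{14}}\chi(\ell)/\ell$ has mean $o(1)$ and mean square $O(1/y)+o(1)$ over primes $p\le X$ in your fixed class. Then for most such $p$ the middle product is $\asymp 1$, and the exceptional set of size $X^{2/13}$ is negligible. Either argument is needed; as written, Step 3 does not produce the two families.
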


The key technical input that makes the results of this work possible is an explicit trace estimate for $\Tr_{S_k^\sigma(N)}T'_m$ and $\Tr_{S_k^{\new,\sigma}(N)}T'_m$, given in Proposition \ref{prop:final-trace-formula}.
The proof of this Proposition has been relegated to Appendix \ref{sec:trace-calculation-appendix} due to its length.

We would like to briefly mention a few reasons why we are interested to extend the study of the Hecke polynomial coefficients to the Atkin-Lehner eigenspaces in particular. The Hecke coefficients have been observed to possess many nice properties in the global space $S_k(N)$. It is interesting to determine which of these properties also hold at a finer level in the $S_k^\sigma(N)$, and which properties only appear globally. In fact, the sign pattern spaces are, in some sense, the finest possible natural decomposition on which one can study such questions; the $S_k^{\nw,\sigma}(N)$ seem to be the finest possible subspaces of $S_k^\nw(N)$ which respect the Hecke operators and Galois conjugation \cite[Section 5]{chow-ghitza}. 

We are also interested to extend this study to the sign pattern spaces because it involves an explicit trace formula for $T_m'$ over $S_k^\sigma(N)$, namely Proposition \ref{prop:final-trace-formula}. We believe that this trace formula will be useful to others as well; such a result has many different applications beyond just determining the behavior of the Hecke polynomial coefficients. For example, such an explicit trace formula is needed to prove effective equidistribution of Hecke eigenvalues along the lines of \cite{Murty-Sinha} for $S_k^\sigma(N)$. This will be carried out in the forthcoming paper \cite{kumar-mondal-ross-xue}. 
Very recently, Padurariu-Park-Voight \cite{Padurariu-Park-Voight} computed the complete list of pairs $(N,\sigma)$ (with $N$ squarefree) such that $\dim S_2^{\nw,\sigma}(N) = 0$. Our explicit trace formula would also be useful to extend this classification to general weight $k$.

Finally, we note an open problem related to the results of this paper. We conjecture that the second Hecke coefficients are always non-vanishing over $S_k^\sigma(N)$ and $S_k^{\nw,\sigma}(N)$ for weight $k \ge 8$. 
\begin{conjecture} \label{conj:nonvanishing}
    Let $m \ge 1$, $N \ge 1$ be coprime to $m$, and $\sigma$ be a sign pattern for $N$.
    Then $c_2(m,N,k,\sigma) \ne 0$ for all even integers $k \ge 8$ with $\dim S_k^\sigma(N) \ge 2$, and $c_2^\nw(m,N,k,\sigma) \ne 0$ for all even integers $k \ge 8$ with $\dim S_k^{\nw,\sigma}(N) \ge 2$.
\end{conjecture}
We have verified this conjecture for all $8 \le k \le 40$, $1 \le m \le 50$, $1 \le N \le 300$, and sign patterns $\sigma$ for $N$; see \cite{ross-code} for the code.

We now give an outline of the paper. 
In Section \ref{sec:newton-girard-identities}, we use the Newton–Girard identities to relate the coefficients $c_r(m,N,k,\sigma)$ to the eigenvalues of $T'_m$ over $S_k^\sigma(N)$.
In Sections \ref{sec:main-theorem-square} and \ref{sec:main-theorem-nonsquare}, we use the trace formula estimate from Appendix \ref{sec:trace-calculation-appendix} to determine the asymptotic behavior of $c_r(m,N,k,\sigma)$, proving Theorems \ref{thm:main-thm-square} and \ref{thm:main-thm-nonsquare}, respectively.
Next, in Section \ref{sec:counterexample} we prove Theorem \ref{thm:counterexample} by constructing two infinite families of pairs $(N,\sigma)$ with second Hecke coefficients of opposing signs.
In Section \ref{sec:extending-to-newspace}, we extend all of our main results to the newspace, and provide explicit estimates for $c_r^\new(m,N,k,\sigma)$.
Lastly, in Section \ref{sec:sign-of-second-coeff} we prove Propositions \ref{prop:second-coef-sign-square} and \ref{prop:second-coef-sign-nonsquare}, which give explicit bounds related to our sign prediction for $c_2(m,N,k,\sigma)$. 
We also include three appendices: Appendix \ref{sec:trace-calculation-appendix} computes an explicit trace formula for $T_m'$ over $S_k^\sigma(N)$, Appendix \ref{sec:trace-formula-prime} specializes this trace formula to the case of prime level, and Appendix \ref{sec:class_number_lemma} estimates the asymptotic behavior of a certain family of class numbers.


\section{Newton-Girard identities} \label{sec:newton-girard-identities}

In this section, we use the Newton-Girard identities to relate the coefficients $c_r=c_r(m,N,k,\sigma)$ to the traces of certain Hecke operators.

Let $d=\dim S_k^\sigma(N)$, and $\lambda_1,\dots,\lambda_d$ be the eigenvalues of $T_m'$ over $S_k^\sigma(N)$. Note that $c_r$ can be expressed in terms of the $r$-th elementary symmetric polynomial in these eigenvalues, i.e.
\[c_r=(-1)^r\sum_{1\leq i_1<\cdots <i_r\leq d}\lambda_{i_1}\lambda_{i_2}\cdots \lambda_{i_r} .\]
Let $p_r$ denote the $r$-th moment of these eigenvalues,
\begin{equation} \label{pj-definition}
    p_r  :=  \sum_{i=1}^d \lambda_i^r.
\end{equation}

Importantly, $p_r$ can be expressed in terms of the traces of Hecke operators. For instance, in the case of the second coefficient, we have that 
\begin{equation}\label{eqn:second-coefficient-formula}  
  c_2 = \sum_{1\leq i<j\leq d}\lambda_{i}\lambda_{j} = \frac{1}{2}\Big(p_1^2 - p_2\Big) 
    = \frac{1}{2}\left(\left(\Tr_{S_k^\sigma(N)} T'_m\right)^2 - \Tr_{S_k^\sigma(N)} T'^{\,2}_m\right).
\end{equation}
In fact, this can be generalized to the following.

\begin{lemma}[Newton-Girard Identities] \label{lem:newton-girard-id}
    Let $c_r$ and $p_r$ be defined as above. Then for $r\geq1$, 
    $$c_r = \frac{-1}{r} \sum_{j=1}^r c_{r-j}p_j.$$
\end{lemma}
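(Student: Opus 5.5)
The plan is to prove the identity by a generating-function argument applied to the characteristic polynomial of $T_m'$ on $S_k^\sigma(N)$. Nothing about modular forms enters here: the statement holds for any finite list of complex numbers $\lambda_1,\dots,\lambda_d$ with $c_r$ defined as $(-1)^r e_r(\lambda_1,\dots,\lambda_d)$ (with $c_0=1$ and $c_r=0$ for $r>d$) and $p_r=\sum_i\lambda_i^r$.

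First, I form the reversed polynomial
\[
E(t) := \sum_{r\ge 0} c_r t^r = \prod_{i=1}^d (1-\lambda_i t).
\]
This equality follows by expanding the product and comparing with the definition of $c_r$ as signed elementary symmetric polynomials; equivalently, $E(t)=t^d\,T'_m(N,k,\sigma)(1/t)$.

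Next, I take the logarithmic derivative as formal power series in $t$:
\[
t\,\frac{E'(t)}{E(t)} = \sum_{i=1}^d \frac{-\lambda_i t}{1-\lambda_i t} = -\sum_{i=1}^d\sum_{j\ge1}\lambda_i^j t^j = -\sum_{j\ge 1} p_j t^j .
\]
The geometric expansion is legitimate in $\mathbb{C}[[t]]$ because each factor $1-\lambda_i t$ has constant term $1$ and is therefore invertible. Multiplying through by $E(t)$ gives
\[
t E'(t) = -E(t)\sum_{j\ge1}p_j t^j .
\]

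Finally, I compare the coefficients of $t^r$ for $r\ge1$. On the left the coefficient is $r c_r$. On the right it is $-\sum_{j=1}^r c_{r-j}p_j$. Dividing by $r$ gives the lemma.

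The only point needing care is the bookkeeping when $r>d$: there $c_r=0$, and the identity still holds with $c_{r-j}=0$ whenever $r-j>d$. Since this is automatic in the power-series formulation, I do not expect any real obstacle. An alternative route is induction via $\prod_i(x-\lambda_i)$ and summing $\lambda_i^{r-d}\,T'_m(\lambda_i)=0$ over $i$, but that requires separate treatment of the cases $r\le d$ and $r>d$, so I would use the generating-function argument.
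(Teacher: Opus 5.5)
Your proof is correct and complete. The identity $\prod_{i=1}^d(1-\lambda_i t)=\sum_{r\ge0} c_r t^r$, the logarithmic derivative $tE'(t)/E(t)=-\sum_{j\ge1}p_j t^j$ in $\mathbb{C}[[t]]$, and comparison of the $t^r$-coefficients give exactly the stated recursion.

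The paper does not prove this lemma; it quotes it as the classical Newton--Girard identity, so your generating-function derivation is the standard argument it relies on. Your convention $c_r=0$ for $r>d$ agrees with the paper's definition of $c_r$ as a signed elementary symmetric sum, which is empty when $r>d$. This matters, because the paper applies the recursion for fixed $r$ without checking that $\dim S_k^\sigma(N)\ge r$.
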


These identities give a recursive formula for the coefficients $c_r$ in terms of the moments $p_r$. And the moments $p_r$ can be calculated via the explicit trace formula for $T_m'$, computed in Appendix \ref{sec:trace-calculation-appendix}.



\section{Asymptotics of coefficients for square \texorpdfstring{$m$}{m}} \label{sec:main-theorem-square}

In the following two sections we utilize the trace formula estimate Proposition  \ref{prop:final-trace-formula} to show Theorems \ref{thm:main-thm-square} and
\ref{thm:main-thm-nonsquare}. The argument for both sections follows the same strategy laid out in \cite{ross-xue}. In this section, we address the case where $m$ is a square, and in the next section, we will address the case where $m$ is not a square. 

As usual, in the following, $\sigma_t(n):=\sum_{d\mid n}d^{\,t}$,
$\omega(n)$ denotes the number of prime divisors of $n$,
and $\psi(n) := n\prod_{p \mid n} \lrp{1 + \frac{1}{p}}$ denotes the Dedekind psi function. Note that $\sigma_0$ and $\sigma_1$ are not to be confused with the sign pattern $\sigma$.
Additionally, we will use the notation $f(N) = O(N^\varepsilon)$ to mean that $f(N) = O_\varepsilon(N^\varepsilon)$ for all $\varepsilon > 0$.

First, we give the following estimate for the $p_j$, as defined in Section \ref{sec:newton-girard-identities}. 
\begin{lemma} \label{lem:pj-bound-square}
    Fix an integer $r \geq 1$ and a square $m \geq 1$. Then for $N$ coprime to $m$, sign patterns $\sigma$ for $N$, and $k \geq 2$ even,
    \begin{align*}
        p_1 &= \frac{1}{\sqrt{m}} \frac{k - 1}{12} \frac{\psi(N)}{2^{\omega(N)}} + O_{r,m}\left(N^{1/2 + \varepsilon}\right), \text{ and} \\
        p_j &= O_{r,m}\left(kN\right) \qquad\qquad\,\,\,\, \text{ for all } 1 \leq j \leq r.
    \end{align*}
\end{lemma}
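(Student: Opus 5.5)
We reduce everything to the trace estimate of Proposition \ref{prop:final-trace-formula}. The key observation is that $p_j = \Tr_{S_k^\sigma(N)} T_m'^{\,j}$. Since the normalized Hecke operators satisfy $T'_a T'_b = \sum_{d \mid (a,b)} T'_{ab/d^2}$ for $a,b$ coprime to $N$, we can expand $T_m'^{\,j}$ as
\[
T_m'^{\,j} = \sum_{n} a_{j}(n)\, T'_{n},
\]
a finite integer combination. Here each $n$ divides $m^j$, is coprime to $N$, and satisfies $n \le m^j$. The coefficients $a_j(n) \ge 0$ depend only on $j$ and $m$, not on $N$, $k$ or $\sigma$. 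So each $p_j$ is a sum of boundedly many (in terms of $r,m$) traces $\Tr_{S_k^\sigma(N)} T'_n$, with weights bounded in terms of $r,m$. All implied constants may therefore depend on $r$ and $m$.

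Next we insert the trace estimate. We expect Proposition \ref{prop:final-trace-formula} to have the following shape:
\[
\Tr_{S_k^\sigma(N)} T'_n = \delta_{n=\square}\,\frac{1}{\sqrt n}\frac{k-1}{12}\frac{\psi(N)}{2^{\omega(N)}} + (\text{error}).
\]
The main term comes from the identity contribution, divided by $2^{\omega(N)}$ because the sign-pattern projector averages over the $2^{\omega(N)}$ Atkin--Lehner operators. The error term collects
\begin{itemize}
\item the elliptic/class-number terms, which are $O_n(N^{1/2+\varepsilon})$ after dividing by $2^{\omega(N)}$;
\item the hyperbolic and cusp terms, which are $O_n(N^{1/2+\varepsilon})$ via $\sigma_0(N)$-type divisor bounds;
\item the contributions of the nontrivial $W_Q$, whose traces involve class numbers $H(4Q n - t^2)$-type sums of size $O_n(k\,Q^{1/2+\varepsilon})$ or similar.
\end{itemize}

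For $p_1$ (that is, $j=1$, $n=m$ a square), the main term is exactly the displayed one. The real issue is whether the error is $O(N^{1/2+\varepsilon})$ uniformly in $k$, as the lemma states. The $W_Q$-twisted terms typically carry a factor depending on $k$ only through bounded quantities such as $\rho^{k-1}$ with $|\rho| = 1$ after normalization. So they should be $O(N^{1/2+\varepsilon})$ uniformly in $k$, and we would read this uniformity off from Proposition \ref{prop:final-trace-formula}.

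For the crude bound $p_j = O(kN)$, we apply the same expansion. Each main term is $\ll k\psi(N) \ll kN$, since $\psi(N) \ll N\log\log N$ is absorbed once we note $\psi(N)/2^{\omega(N)} \ll N$. Each error term is $O(N^{1/2+\varepsilon}) = O(N)$. Alternatively, we can argue without the trace formula: by Deligne, $|\lambda_i| \le \sigma_0(m)$, so
\[
|p_j| \le \sigma_0(m)^j \dim S_k^\sigma(N) \le \sigma_0(m)^j \dim S_k(N) \ll_{r,m} kN \log\log N.
\]
To obtain the clean $kN$ bound, the dimension formula for $S_k^\sigma(N)$ (the case $n=1$ of the trace estimate) gives $\dim S_k^\sigma(N) \ll k\psi(N)/2^{\omega(N)} + N^{1/2+\varepsilon} \ll kN$, because $\psi(N)/2^{\omega(N)} \le \prod_{p^e\|N} p^e(1+1/p)/2 \le N$. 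This Deligne-based route is the cleanest, and we would use it for the second statement.

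The main obstacle is verifying that the error term in Proposition \ref{prop:final-trace-formula} for $\Tr T'_m$, with $m$ a square, is $O_m(N^{1/2+\varepsilon})$ with no growth in $k$. This requires checking, inside the appendix formula, that every non-identity contribution (elliptic, hyperbolic, parabolic, and all $W_Q$-twisted pieces) is bounded independently of $k$ after the normalization by $m^{-(k-1)/2}$. The delicate pieces are the $W_Q$ terms with large $Q$, where we need class number sums of size $Q^{1/2+\varepsilon}$ summed over $Q \parallel N$ and divided by $2^{\omega(N)}$, and these should still give $N^{1/2+\varepsilon}$.
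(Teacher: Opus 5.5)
Your proposal is correct and follows the paper's proof: the $p_1$ estimate is read off directly from Proposition \ref{prop:final-trace-formula}. That proposition's explicit error bound, $48.99\, m^2\sigma_0(m)\sqrt{N}\log(4N)\sigma_0(N)^2$, has no $k$-dependence, so the ``obstacle'' you flag is already resolved there. The bound $p_j = O_{r,m}(kN)$ is obtained exactly as in your chosen Deligne route, from $\dim S_k^\sigma(N) = \Tr_{S_k^\sigma(N)} T'_1 = O(kN)$ (using $\psi(N)/2^{\omega(N)} \le N$) together with $|\lambda_i| \le \sigma_0(m)$, so your Hecke-algebra expansion of $T_m'^{\,j}$ and your speculation about the trace formula's internal shape are not needed.
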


\begin{proof}
    The first claim follows from Proposition  \ref{prop:final-trace-formula}.
    For the second claim, observe that by Proposition  \ref{prop:final-trace-formula} and the fact that $\frac{\psi(N)}{2^{\omega(N)}} \le N$, we have
    $$d  :=  \dim S_k^\sigma (N) = \operatorname{Tr}_{S_k^\sigma(N)}T'_1 = O_{r,m}\left(kN\right).$$
    By Deligne's bound $|\lambda_i|\leq \sigma_0(m)$, this then implies that
    $$|p_j| = \lrabs{\sum_{i = 1}^d \lambda_i^j} \leq \sum_{i = 1}^d \sigma_0(m)^j \le \sigma_0(m)^j \cdot d  =  O_{r,m}\left(kN\right),$$
    completing the proof.
\end{proof}

We now determine the asymptotic behavior of $c_r(m,N,k,\sigma)$ as $N+k\to \infty$.

{
\renewcommand{\thetheorem}{\ref{thm:main-thm-square}}
\begin{theorem}
    Fix an integer $r \geq 0$ and a square $m \geq 1$. Then for $N$ coprime to $m$, sign patterns $\sigma$ for $N$, and $k \geq 2$ even, we have that
    $$c_r(m, N, k,\sigma) = \frac{(-1)^r}{r!} \left( \frac{1}{\sqrt{m}} \frac{k - 1}{12} \frac{\psi(N)}{2^{\omega(N)}} \right)^r + O_{r,m}\left(k^{r - 1}N^{r-1/2+\varepsilon}\right).$$
\end{theorem}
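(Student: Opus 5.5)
We argue by strong induction on $r$, using the Newton--Girard recursion of Lemma \ref{lem:newton-girard-id} together with the moment estimates of Lemma \ref{lem:pj-bound-square}. Throughout, write
\[
X := \frac{1}{\sqrt{m}} \frac{k - 1}{12} \frac{\psi(N)}{2^{\omega(N)}},
\qquad\text{so that}\qquad X = O_m(kN).
\]
The base case $r=0$ is trivial, since $c_0 = 1$. For $r=1$, we have $c_1 = -p_1$, and the first part of Lemma \ref{lem:pj-bound-square} gives $c_1 = -X + O_m(N^{1/2+\varepsilon})$. This has the required error $O(k^0 N^{1/2+\varepsilon})$. Note that the error here carries no factor of $k$; this will matter below.

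For the inductive step, assume the claim holds for all indices $0 \le s < r$. Newton--Girard gives
\[
c_r = -\frac{1}{r}\Bigl(c_{r-1}p_1 + \sum_{j=2}^{r} c_{r-j}p_j\Bigr).
\]
\begin{itemize}
\item \emph{Main term.} Write $c_{r-1} = \frac{(-1)^{r-1}}{(r-1)!}X^{r-1} + E_{r-1}$ and $p_1 = X + O(N^{1/2+\varepsilon})$. The product of the main terms contributes $-\frac{1}{r}\cdot\frac{(-1)^{r-1}}{(r-1)!}X^r = \frac{(-1)^r}{r!}X^r$.
\item \emph{Cross terms from $j=1$.} The term $X^{r-1}\cdot O(N^{1/2+\varepsilon})$ is $O(k^{r-1}N^{r-1/2+\varepsilon})$. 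The term $E_{r-1}\cdot X$ is $O(k^{r-2}N^{r-3/2+\varepsilon}\cdot kN) = O(k^{r-1}N^{r-1/2+\varepsilon})$. The term $E_{r-1}\cdot N^{1/2+\varepsilon}$ is smaller still.
\item \emph{Terms with $j\ge 2$.} By induction, $c_{r-j} = O((kN)^{r-j})$, and Lemma \ref{lem:pj-bound-square} gives $p_j = O(kN)$. Hence each such term is $O((kN)^{r-j+1})$, which is at most $O(k^{r-1}N^{r-1})$ because $j \ge 2$. This lies inside the allowed error, since $N^{r-1} \le N^{r-1/2}$.
\end{itemize}
Summing these contributions completes the induction. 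All implied constants depend only on $r$ and $m$, since the sum over $j$ has boundedly many terms.

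The main obstacle is the bookkeeping of $k$-powers in the error. The estimate $p_j = O(kN)$ only saves powers of $k$ and $N$ together, so we must check that no error term picks up a factor $k^r$. The $j=1$ terms are safe because the error in $p_1$ carries no $k$. The $j\ge 2$ terms lose a full factor $kN$ relative to the main term, and that loss is enough. Everything else rests on Proposition \ref{prop:final-trace-formula} through Lemma \ref{lem:pj-bound-square}, which we take as given.
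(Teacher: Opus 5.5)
Your proof is correct and follows the paper's argument essentially verbatim: strong induction on $r$ via the Newton--Girard recursion, with the main term coming from $c_{r-1}p_1$ and the $j\ge 2$ terms bounded by $O(k^{r-2}N^{r-2})\cdot O(kN)$ using Lemma \ref{lem:pj-bound-square}. Your separate $r=1$ case and your remark that the error in $p_1$ carries no power of $k$ only make explicit what the paper's computation uses implicitly.
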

\addtocounter{theorem}{-1}
}

\begin{proof}
    We proceed via strong induction on $r$. The base case of $r = 0$ is immediate since $c_0 = 1$.
    
    For $r \geq 1$, we have by Lemma \ref{lem:newton-girard-id} that
    \begin{equation} \label{eqn:square-m-c_r-summation}
        c_r = \frac{-1}{r} \sum_{j=1}^r c_{r-j}p_j = \frac{-1}{r} \left[ c_{r - 1}p_1 + \sum_{j=2}^r c_{r-j}p_j \right].
    \end{equation}
    Then by the induction hypothesis,
    \begin{align*}
        c_{r-1} &= \frac{(-1)^{r-1}}{(r-1)!} \left( \frac{1}{\sqrt{m}} \frac{k - 1}{12} \frac{\psi(N)}{2^{\omega(N)}} \right)^{r-1} + O_{r,m}\!\left(k^{r-2}N^{r - 3/2 + \varepsilon}\right), \\
        c_{r-j} &= O_{r,m}\!\left(k^{r-2}N^{r-2}\right) \qquad  \text{ for } 2 \leq j \leq r,
    \end{align*}
    and by Lemma \ref{lem:pj-bound-square},
    \begin{align*}
        p_1 &= \frac{1}{\sqrt{m}} \frac{k - 1}{12} \frac{\psi(N)}{2^{\omega(N)}} + O_{r,m}\!\left(N^{1/2+\varepsilon}\right), \\
        p_j &= O_{r,m}\left(kN\right) \qquad \text{ for } 2 \leq j \leq r.
    \end{align*}
    
    Applying these estimates to \eqref{eqn:square-m-c_r-summation}, we obtain
    \begin{align*}
        c_r &= \frac{-1}{r} \left[ c_{r - 1}p_1 + \sum_{j=2}^r c_{r-j}p_j \right] \\
        &= \frac{-1}{r} \left[ \left( \frac{(-1)^{r-1}}{(r - 1)!} \left( \frac{1}{\sqrt{m}} \frac{k - 1}{12} \frac{\psi(N)}{2^{\omega(N)}} \right)^{r-1} + O_{r,m}\left(k^{r-2}N^{r-3/2+\varepsilon}\right) \right) \right. \\ 
        &\qquad\quad\ \left. \times \left( \frac{1}{\sqrt{m}} \frac{k - 1}{12} \frac{\psi(N)}{2^{\omega(N)}} + O_{r,m}\left(N^{1/2 + \varepsilon}\right) \right) + \sum_{j=2}^r O_{r,m}\left(k^{r-2}N^{r-2}\right) \cdot O_{r,m}\left(kN\right) \right] \\
        &= \frac{-1}{r} \left[ \frac{(-1)^{r-1}}{(r - 1)!} \left( \frac{1}{\sqrt{m}} \frac{k - 1}{12} \frac{\psi(N)}{2^{\omega(N)}} \right)^r + O_{r,m}\left(k^{r-1}N^{r-1/2+\varepsilon}\right) \right] \\
        &= \frac{(-1)^r}{r!} \left( \frac{1}{\sqrt{m}} \frac{k - 1}{12} \frac{\psi(N)}{2^{\omega(N)}} \right)^r + O_{r,m}\left(k^{r-1}N^{r-1/2+\varepsilon}\right),
    \end{align*}
    as desired.
\end{proof}

This theorem allows us to determine the sign of $c_r(m, N, k,\sigma)$ in all but finitely many cases.

{
\renewcommand{\thetheorem}{\ref{cor:coef-sign-square}}
\begin{corollary}
    Fix an integer $r \geq 0$ and a square $m \geq 1$. Then $c_r(m, N, k, \sigma)$ has sign $(-1)^r$ for all but finitely many triples $(N, k,\sigma)$.
\end{corollary}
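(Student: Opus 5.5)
We deduce the corollary directly from Theorem \ref{thm:main-thm-square} by comparing the size of the main term with the size of the error term. The case $r=0$ is trivial since $c_0=1$, so fix $r\ge1$ and a square $m$. Write
\[
M(N,k) := \frac{1}{\sqrt{m}}\frac{k-1}{12}\frac{\psi(N)}{2^{\omega(N)}} .
\]
Theorem \ref{thm:main-thm-square} gives
\[
c_r=\frac{(-1)^r}{r!}M^r+E, \qquad |E|\le C_{r,m,\varepsilon}\,k^{r-1}N^{r-1/2+\varepsilon},
\]
and the constant is uniform in $\sigma$. It therefore suffices to show that $M^r/r!>|E|$ for all but finitely many pairs $(N,k)$. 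Since $N$ has only $2^{\omega(N)}$ sign patterns, finitely many pairs $(N,k)$ give only finitely many triples $(N,k,\sigma)$.

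The key step is a lower bound for $M$. We have $\psi(N)\ge N$, $k-1\ge k/2$ for $k\ge2$, and $2^{\omega(N)}=O_\varepsilon(N^{\varepsilon})$. Hence
\[
M^r\gg_{r,m,\varepsilon} k^rN^{r-r\varepsilon}.
\]
Apply the theorem with $\varepsilon$ replaced by some $\varepsilon'$, and choose $\varepsilon'$ and $\varepsilon$ small enough (say both at most $1/(8r)$) that $r-r\varepsilon>r-1/2+\varepsilon'+1/8$. Then
\[
\frac{|E|}{M^r/r!}\ll_{r,m} \frac{k^{r-1}N^{r-1/2+\varepsilon'}}{k^rN^{r-r\varepsilon}}\le \frac{1}{k\,N^{1/8}} .
\]
So there is a constant $B=B(r,m)$ such that $c_r$ has sign $(-1)^r$ whenever $kN^{1/8}>B$. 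Only finitely many pairs $(N,k)$ with $k\ge2$ and $N\ge1$ satisfy $kN^{1/8}\le B$, which proves the corollary.

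The only delicate point is the main obstacle: the main term carries the factor $2^{-\omega(N)}$, while the error term only saves $N^{1/2-\varepsilon}$. We must check that $2^{\omega(N)}$ does not eat that saving, and the divisor-type bound $2^{\omega(N)}\le\sigma_0(N)=O(N^\varepsilon)$ handles this. The argument gains decay in both $k$ and $N$, which is exactly what finiteness over pairs $(N,k)$ requires; the condition that $N$ be coprime to $m$ only restricts the set of triples further.
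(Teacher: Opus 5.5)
Your proof is correct and follows the paper's route. The paper deduces the corollary directly from Theorem \ref{thm:main-thm-square} together with the bound $\psi(N)/2^{\omega(N)} \gg N^{1-\varepsilon}$, which is exactly your combination of $\psi(N)\ge N$ with $2^{\omega(N)}\le\sigma_0(N)=O(N^{\varepsilon})$; you additionally spell out the $\varepsilon$-bookkeeping and the finiteness count ($kN^{1/8}\le B$ with at most $2^{\omega(N)}$ sign patterns per $N$), which the paper leaves implicit.
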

\addtocounter{theorem}{-1}
}

This corollary follows immediately from the fact that $\frac{\psi(N)}{2^{\omega(N)}} \gg N^{1-\varepsilon}$ \cite[Theorem 2.10]{montgomery-vaughn}.


\section{Asymptotics of coefficients for non-square \texorpdfstring{$m$}{m}} \label{sec:main-theorem-nonsquare}

In this section, we address the case when $m$ is not a square. We will only give asymptotics for $c_r(m,N,k,\sigma)$ as $k \to \infty$ (as opposed to $N+k\to\infty$ from the previous section). In fact, this is necessary; in Section \ref{sec:counterexample}, we will show that the corresponding result does not hold asymptotically in $N$.

 First, we give the following estimates on $p_j$. 
\begin{lemma} \label{lem:pj-bound-nonsquare}
    Fix an integer $r \geq 0$, a non-square $m \geq 1$, and an integer $N\geq 1$ coprime to $m$. Then for sign patterns $\sigma$ for $N$ and $k\geq 2$ even, we have that
    \begin{align*}
        p_1 &= -c_1 = O_{r,m,N}\!\left(1\right), \\
        p_2 &= \frac{\sigma_1(m)}{m} \frac{k-1}{12} \frac{\psi(N)}{2^{\omega(N)}} + O_{r,m,N}\!\left(1\right), \\
        p_3 &= O_{r,m,N}\!\left(1\right), \\
        p_j &= O_{r,m,N}\!\left(k\right) \quad \text{ for all } 1 \leq j \leq r.
    \end{align*}
\end{lemma}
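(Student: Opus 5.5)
We express each moment $p_j$ as a trace of a Hecke operator and then read off its size from the trace estimate in Proposition \ref{prop:final-trace-formula}. The Hecke relations for $T_m$ with $(m,N)=1$ expand $T_m'^{\,j}$ as a linear combination $\sum_{n} a_{j}(n)\, T'_{n}$. Here $n$ runs over divisors of $m^j$ with $m^j/n$ a square, and the integer coefficients $a_j(n)$ depend only on $j$ and $m$ (for example $T_m'^{\,2}=\sum_{d\mid m} T'_{m^2/d^2}$). These relations commute with the Atkin–Lehner involutions $W_Q$, since $(m,N)=1$, so they hold on each $S_k^\sigma(N)$. Hence
\[
p_j=\sum_n a_j(n)\,\Tr_{S_k^\sigma(N)}T'_n .
\]

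Next we apply Proposition \ref{prop:final-trace-formula} to each $\Tr_{S_k^\sigma(N)}T'_n$ with $N$ fixed. The main term $\frac{1}{\sqrt n}\frac{k-1}{12}\frac{\psi(N)}{2^{\omega(N)}}$ appears only when $n$ is a square. All remaining contributions (class number terms, hyperbolic terms, the $W_Q$-twisted terms) should be $O_{n,N}(1)$ as $k\to\infty$, because $N$ and $n$ are fixed. Their $k$-dependence enters only through Gegenbauer-type factors $\frac{\rho^{k-1}-\bar\rho^{k-1}}{\rho-\bar\rho}$ normalized by $n^{(k-1)/2}$, and these are bounded when $|\rho|=\sqrt n$. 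This gives the following consequences.

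For $j=1$ and $j=3$: $m$ is not a square, and $m^j/n$ is a square, so no $n$ occurring is a square. Hence $p_1,p_3=O_{m,N}(1)$, and $p_1=-c_1$ by definition.

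For $j=2$: the square values of $n$ are exactly $n=m^2/d^2$ with $d\mid m$, each with coefficient $1$. Summing their main terms gives
\[
\sum_{d\mid m}\frac{d}{m}\cdot\frac{k-1}{12}\frac{\psi(N)}{2^{\omega(N)}}=\frac{\sigma_1(m)}{m}\frac{k-1}{12}\frac{\psi(N)}{2^{\omega(N)}},
\]
plus $O_{m,N}(1)$.

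For general $1\le j\le r$: the Deligne argument from Lemma \ref{lem:pj-bound-square} gives $|p_j|\le\sigma_0(m)^j d$. Since $d=\Tr T_1'=O_N(k)$, this yields $O_{r,m,N}(k)$.

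The main obstacle is checking that every non-identity term of Proposition \ref{prop:final-trace-formula} is uniformly bounded in $k$ for fixed $(n,N)$. In particular we must rule out a contribution proportional to $k$ from non-square $n$. We expect the proposition's stated error term to make this immediate, and will use it directly if its error is $O_{n,N}(1)$ or $O(n^{\varepsilon}N^{1/2+\varepsilon})$. Otherwise we fall back on bounding the elliptic and hyperbolic sums term by term, using $|\rho^{k-1}-\bar\rho^{k-1}|\le 2n^{(k-1)/2}$.
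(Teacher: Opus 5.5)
Your proposal is correct and follows essentially the same route as the paper. You expand $T_m'^{\,j}$ via the Hecke composition formula, apply Proposition \ref{prop:final-trace-formula} termwise (only square indices contribute a main term, giving $\sigma_1(m)/m$ for $j=2$ and nothing for $j=1,3$), and use Deligne's bound with $d=O_N(k)$ for general $j$. Your concern about $k$-dependence is settled directly by that proposition, whose error bound $48.99\, m^2\sigma_0(m)\sqrt{N}\log(4N)\sigma_0(N)^2$ does not involve $k$, so the Gegenbauer-type fallback is unnecessary.
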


\begin{proof}
    The first claim follows immediately from Proposition  \ref{prop:final-trace-formula}. For the second claim, note that $p_2 = \operatorname{Tr}_{S_k^\sigma(N)}{T'^{\,2}_{m}}$. Then by the Hecke operator composition formula \cite[Theorem 10.2.9]{cohen2017modular} and Proposition  \ref{prop:final-trace-formula},
    \begin{align*}
        p_2 = \operatorname{Tr}_{S_k^\sigma(N)}{T'^{\,2}_{m}} = \sum_{d|m} \operatorname{Tr}_{S_k^\sigma(N)}T'_{m^2/d^2} &= \sum_{d|m}\lrp {\frac{d}{m} \frac{k-1}{12} \frac{\psi(N)}{2^{\omega(N)}} + E_{k,\sigma}\lrp{\frac{m^2}{d^2},N}} \\
        &= \frac{\sigma_1(m)}{m} \frac{k-1}{12} \frac{\psi(N)}{2^{\omega(N)}} +\sum_{d\mid m} E_{k,\sigma}\lrp{\frac{m^2}{d^2},N}. \\
        &= \frac{\sigma_1(m)}{m} \frac{k-1}{12} \frac{\psi(N)}{2^{\omega(N)}} + O_{r,m,N}\!\left(1\right).
    \end{align*}
    For the third claim, we have by the Hecke operator composition formula and Proposition  \ref{prop:final-trace-formula},
    \begin{align*}
        p_3 = \operatorname{Tr}_{S_k^\sigma(N)}T_{m}'^{\,3} = \operatorname{Tr}_{S_k^\sigma(N)} \sum_{d|m} T'_{m^2/d^2} T'_m &= \sum_{d|m} \sum_{\delta | (m^2/d^2,m)} \operatorname{Tr}_{S_k^\sigma(N)} T'_{m^3/d^2\delta^2} \\
        &= \sum_{d|m} \sum_{\delta | (m^2/d^2,m)} E_{k,\sigma}\lrp{\frac{m^3}{d^2\delta^2},N} \\
        &= \sum_{d|m} \sum_{\delta | (m^2/d^2,m)} O_{r,m,N}\!\left(1\right) \\
        &= O_{r,m,N}\!\left(1\right).
    \end{align*}
    Lastly, the fourth claim follows from an identical argument as in Lemma \ref{lem:pj-bound-square}.
\end{proof}

For fixed $m$, $r$, and $N$, we now determine the asymptotic behavior of $c_r(m, N, k,\sigma)$ as $ k \to \infty$.

{
\renewcommand{\thetheorem}{\ref{thm:main-thm-nonsquare}}
\begin{theorem}
    Fix an integer $r \geq 0$, a non-square $m\geq 1$, and an integer $N\geq 1$ coprime to $m$. Then for sign patterns $\sigma$ for $N$ and $k \geq 2$ even, we have that
    \begin{align*}
        c_{2r} &= \frac{(-1)^r}{(2r)!!} \left(\frac{\sigma_1(m)}{m} \frac{k-1}{12} \frac{\psi(N)}{2^{\omega(N)}}
        \right)^r + O_{r,m,N}\!\left(k^{r-1}\right) \quad \text{ and} \\
        c_{2r+1} &= c_1 \cdot \frac{(-1)^r}{(2r)!!} \left(\frac{\sigma_1(m)}{m} \frac{k-1}{12} \frac{\psi(N)}{2^{\omega(N)}}\right)^r + O_{r,m,N}\!\left(k^{r-1}\right)\!.
    \end{align*}
\end{theorem}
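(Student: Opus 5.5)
We argue by strong induction on $r$, proving both statements (for $c_{2r}$ and for $c_{2r+1}$) together. Throughout we use the Newton--Girard recursion of Lemma \ref{lem:newton-girard-id} and the moment estimates of Lemma \ref{lem:pj-bound-nonsquare}. Write $A := \frac{\sigma_1(m)}{m}\frac{k-1}{12}\frac{\psi(N)}{2^{\omega(N)}}$, so that $p_2 = A + O(1)$ and $A \asymp_{m,N} k$. Every implied constant will depend only on $r,m,N$.

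The inductive hypothesis we carry is that $c_{2s} = \frac{(-1)^s}{(2s)!!}A^s + O(k^{s-1})$ and $c_{2s+1} = c_1\frac{(-1)^s}{(2s)!!}A^s + O(k^{s-1})$ for all $s<r$. Since $c_1=-p_1=O(1)$, this immediately gives the crude bounds $c_{2s}=O(k^s)$ and $c_{2s+1}=O(k^s)$. In other words, $c_n = O(k^{\lfloor n/2\rfloor})$. The base cases are $c_0=1$, and $c_1$ itself, which trivially satisfies its formula with error $O(k^{-1})\subseteq O(1)$; if needed we read the error for $r=0$ simply as $O(1)$.

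For the even step, write
\[
c_{2r} = -\tfrac{1}{2r}\Big[c_{2r-1}p_1 + c_{2r-2}p_2 + c_{2r-3}p_3 + \sum_{j\ge4} c_{2r-j}p_j\Big].
\]
\begin{itemize}
\item The main term is $c_{2r-2}p_2 = \frac{(-1)^{r-1}}{(2r-2)!!}A^r + O(k^{r-1})$.
\item The terms $c_{2r-1}p_1$ and $c_{2r-3}p_3$ are $O(k^{r-1})\cdot O(1)$, because $p_1$ and $p_3$ are $O(1)$.
\item For $j\ge4$, we have $c_{2r-j}p_j = O(k^{\lfloor(2r-j)/2\rfloor})\cdot O(k) = O(k^{r-1})$, since $\lfloor (2r-j)/2\rfloor \le r-2$.
\end{itemize}
Dividing by $-2r$ and using $(2r)!! = 2r\,(2r-2)!!$ gives the claimed formula for $c_{2r}$.

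For the odd step, similarly write
\[
c_{2r+1} = -\tfrac{1}{2r+1}\Big[c_{2r}p_1 + c_{2r-1}p_2 + c_{2r-2}p_3 + \sum_{j\ge4} c_{2r+1-j}p_j\Big].
\]
\begin{itemize}
\item Using the freshly proven formula for $c_{2r}$ and $p_1=-c_1$, the first term is $-c_1\frac{(-1)^r}{(2r)!!}A^r + O(k^{r-1})$.
\item Using the hypothesis on $c_{2r-1}$, the second term is $c_1\frac{(-1)^{r-1}}{(2r-2)!!}A^r + O(k^{r-1})$; the correction from $p_2 - A = O(1)$ is absorbed, since $c_{2r-1}=O(k^{r-1})$.
\item The third term is $O(k^{r-1})$, because $p_3 = O(1)$.
\item For $j\ge4$, the terms are $O(k^{\lfloor (2r+1-j)/2\rfloor + 1}) = O(k^{r-1})$.
\end{itemize}
Summing the two main contributions gives $c_1\frac{(-1)^{r-1}A^r}{(2r)!!}(1+2r)$. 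Multiplying by $-\frac{1}{2r+1}$ yields exactly $c_1\frac{(-1)^r}{(2r)!!}A^r$, as claimed.

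The only delicate point is this bookkeeping of which terms are of top order $k^r$. The $j=1$ and $j=2$ terms both contribute at top order in the odd case, and it must be checked that their coefficients combine to $2r+1$. The vanishing-order input $p_1,p_3=O(1)$ from Lemma \ref{lem:pj-bound-nonsquare} is what prevents further top-order contributions. Since $p_j=O(k)$ is only used for $j\ge4$, no finer estimates for higher moments are required.
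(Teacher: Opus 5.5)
Your proposal is correct and follows the paper's proof essentially step for step: strong induction via Newton--Girard, the moment bounds of Lemma \ref{lem:pj-bound-nonsquare}, and in the odd step the same combination of the $j=1$ and $j=2$ top-order terms into the factor $2r+1$. The only cosmetic difference is in the even step, where you split off $j=3$ and use $p_3=O(1)$; the paper keeps $j=3$ with the terms $j\ge 3$ and uses $p_3=O(k)$, which also suffices because $c_{2r-3}=O(k^{r-2})$.
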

\addtocounter{theorem}{-1}
}

\begin{proof}
    We proceed via strong induction on $r$. The base case of $r = 0$ is immediate since $c_0 = 1$ and $c_1 = c_1$.
    For $r \geq 1$, we have from Lemma \ref{lem:newton-girard-id} that
    \begin{equation}\label{eqn:even-coeff-formula}
        c_{2r} = \frac{-1}{2r} \sum_{j=1}^{2r} c_{2r-j}p_j = \frac{-1}{2r} \left[ c_{2r-1}p_1 + c_{2r-2}p_2 + \sum_{j=3}^{2r} c_{2r-j}p_j \right].
    \end{equation}
    By the induction hypotheses,
    \begin{align*}
        c_{2r-1} &= O_{r,m,N}\left(k^{r-1}\right), \\
        c_{2r-2} &= \frac{(-1)^{r-1}}{(2r-2)!!}\lrp{\frac{\sigma_1(m)}{m} \frac{k-1}{12} \frac{\psi(N)}{2^{\omega(N)}}}^{r-1}
        + O_{r,m,N}\left(k^{r-2}\right), \\
        c_{2r-j} &= O_{r,m,N}\!\left(k^{r-2}\right) \qquad \text{ for } 3 \leq j \leq 2r,
    \end{align*}
    and by Lemma \ref{lem:pj-bound-nonsquare},
    \begin{align*}
        p_1 &= O_{r,m,N}\!\left(1\right), \\
        p_2 &= \frac{\sigma_1(m)}{m} \frac{k-1}{12} \frac{\psi(N)}{2^{\omega(N)}} + O_{r,m,N}\!\left(1\right), \\
        p_j &= O_{r,m,N}\!\left(k\right) \qquad \text{ for } 3 \leq j \leq 2r.
    \end{align*}
    Applying these estimates to \eqref{eqn:even-coeff-formula}, we obtain
    \begin{align*}
        c_{2r} &= \frac{-1}{2r}\left[ c_{2r-1}p_1 + c_{2r-2}p_2 + \sum_{j=3}^{2r} c_{2r-j}p_j \right] \\
        &= \frac{-1}{2r} \left[ O_{r,m,N}\!\left(k^{r-1}\right) \cdot O_{r,m,N}\!\left(1\right) + \left( \frac{(-1)^{r-1}}{(2r-2)!!} \left(\frac{\sigma_1(m)}{m} \frac{k-1}{12} \frac{\psi(N)}{2^{\omega(N)}}\right)^{r-1} + O_{r,m,N}\!\left(k^{r-2}\right) \right) \right. \\ 
        &\qquad\quad\ \left. \times \left(\frac{\sigma_1(m)}{m} \frac{k-1}{12} \frac{\psi(N)}{2^{\omega(N)}}+ O_{r,m,N}\!\left(1\right) \right) + \sum_{j=3}^{2r} O_{r,m,N}\!\left(k^{r-2}\right) \cdot O_{r,m,N}\!\left(k\right) \right] \\
        &= \frac{-1}{2r} \left[ O_{r,m,N}\!\left(k^{r-1}\right) + \frac{(-1)^{r-1}}{(2r-2)!!} \left(\frac{\sigma_1(m)}{m} \frac{k-1}{12} \frac{\psi(N)}{2^{\omega(N)}}\right)^r + O_{r,m,N}\!\left(k^{r-1}\right) + O_{r,m,N}\!\left(k^{r-1}\right) \right] \\
        &= \frac{(-1)^r}{(2r)!!} \left(\frac{\sigma_1(m)}{m} \frac{k-1}{12} \frac{\psi(N)}{2^{\omega(N)}}\right)^r + O_{r,m,N}\!\left(k^{r-1}\right),
    \end{align*}
    confirming the first claim of the inductive step.
    For the second claim of the inductive step, Lemma \ref{lem:newton-girard-id} yields that
    \begin{equation}\label{eqn:odd-coeff-formula}
        c_{2r+1} = \frac{-1}{2r+1} \sum_{j=1}^{2r+1} c_{2r+1-j}p_j = \frac{-1}{2r+1} \left[c_{2r}p_1 + c_{2r-1}p_2 + c_{2r-2}p_3 + \sum_{j=4}^{2r+1} c_{2r+1-j}p_j \right].
    \end{equation}
    Now by the induction hypothesis and the proof for $c_{2r}$, 
    \begin{align*}
        c_{2r} &= \frac{(-1)^r}{(2r)!!} \left(\frac{\sigma_1(m)}{m} \frac{k-1}{12} \frac{\psi(N)}{2^{\omega(N)}}\right)^r + O_{r,m,N}\!\left(k^{r-1}\right), \\
        c_{2r-1} &= c_1 \cdot \frac{(-1)^{r-1}}{(2r-2)!!}\left(\frac{\sigma_1(m)}{m} \frac{k-1}{12} \frac{\psi(N)}{2^{\omega(N)}}\right)^{r-1} + O_{r,m,N}\!\left(k^{r-2}\right), \\
        c_{2r-2} &= O_{r,m,N}\!\left(k^{r-1}\right), \\
        c_{2r+1-j} &= O_{r,m,N}\!\left(k^{r-2}\right) \qquad \text{ for } 4 \leq j \leq 2r + 1,
    \end{align*}
    and by Lemma \ref{lem:pj-bound-nonsquare},
    \begin{align*}
        p_1 &= -c_1, \\
        p_2 &= \frac{\sigma_1(m)}{m} \frac{k-1}{12} \frac{\psi(N)}{2^{\omega(N)}} + O_{r,m,N}(1), \\
        p_3 &= O_{r,m,N}(1), \\
        p_j &= O_{r,m,N}(k) \qquad\text{ for } 4 \leq j \leq 2r+1.
    \end{align*}
    Applying these estimates to \eqref{eqn:odd-coeff-formula}, we obtain
    \begin{align*}
        c_{2r+1} 
        &= \frac{-1}{2r+1} \left[ c_{2r}p_1 + c_{2r-1}p_2 + c_{2r-2}p_3 + \sum_{j=4}^{2r+1} c_{2r+1-j}p_j \right] \\
        &= \frac{-1}{2r+1} \Bigg[ \left( \frac{(-1)^r}{(2r)!!} \left(\frac{\sigma_1(m)}{m} \frac{k-1}{12} \frac{\psi(N)}{2^{\omega(N)}}\right)^r + O_{r,m,N}\!\left(k^{r-1}\right) \right) \cdot (-c_1)  \\ 
        &\qquad\qquad\quad  + \left( c_1 \cdot \frac{(-1)^{r-1}}{(2r-2)!!} \left( \frac{\sigma_1(m)}{m} \frac{k-1}{12} \frac{\psi(N)}{2^{\omega(N)}}\right)^{r-1} + O_{r,m,N}\!\left(k^{r-2}\right) \right) \\
        &\qquad \qquad\quad  \times \left( \frac{\sigma_1(m)}{m} \frac{k-1}{12} \frac{\psi(N)}{2^{\omega(N)}} + O_{r,m,N}\!\left(1\right) \right) + O_{r,m,N}\!\left(k^{r-1}\right) \cdot O_{r,m,N}\!\left(1\right)  \\
        &\qquad\qquad\quad  + \sum_{j=4}^{2r+1} O_{r,m,N}\!\left(k^{r-2}\right) \cdot O_{r,m,N}\!\left(k\right) \Bigg] \\
        &= \frac{-1}{2r+1} \left[ -c_1 \cdot \frac{(-1)^r}{(2r)!!} \left(\frac{\sigma_1(m)}{m} \frac{k-1}{12} \frac{\psi(N)}{2^{\omega(N)}}\right)^r + O_{r,m,N}\!\left(k^{r-1}\right)  \right. \\
        &\qquad \qquad\quad+ c_1 \cdot \frac{(-1)^{r-1}}{(2r-2)!!}\left(\frac{\sigma_1(m)}{m} \frac{k-1}{12} \frac{\psi(N)}{2^{\omega(N)}}\right)^r + O_{r,m,M}\!\left(k^{r-1}\right)  \\
        &\qquad\qquad\quad+ O_{r,m,N}\!\left(k^{r-1}\right) + O_{r,m,N}\!\left(k^{r-1}\right) \bigg{]} \\
        &= c_1 \cdot \frac{1}{2r+1} \left( \frac{(-1)^r}{(2r)!!} - \frac{(-1)^{r-1}}{(2r-2)!!} \right) \cdot \left(\frac{\sigma_1(m)}{m} \frac{k-1}{12} \frac{\psi(N)}{2^{\omega(N)}}\right)^r + O_{r,m,N}\!\left(k^{r-1}\right) \\
        &= c_1 \cdot \frac{(-1)^r}{(2r)!!} \left(\frac{\sigma_1(m)}{m} \frac{k-1}{12} \frac{\psi(N)}{2^{\omega(N)}}\right)^r + O_{r,m,N}\!\left(k^{r-1}\right),
    \end{align*}
    confirming the second claim of the inductive step.
    This completes the proof.
\end{proof}

This theorem allows us to determine the sign of $c_{2r}(m, N, k,\sigma)$ in all but finitely many cases.
{
\renewcommand{\thetheorem}{\ref{cor:even-coef-sign-nonsquare}}
\begin{corollary} 
    Fix an integer $r \geq 0$, a non-square $m\geq 1$, and $N\geq 1$ coprime to $m$. Then for each sign pattern $\sigma$ for $N$, $c_{2r}(m,N,k,\sigma)$ has sign $(-1)^r$ for all but finitely many values of $k$.
\end{corollary}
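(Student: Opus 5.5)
This corollary should follow directly from Theorem \ref{thm:main-thm-nonsquare}, much as Corollary \ref{cor:coef-sign-square} followed from Theorem \ref{thm:main-thm-square}. Fix $r$, the non-square $m$, $N$ coprime to $m$, and a sign pattern $\sigma$ for $N$. Since there are only finitely many sign patterns for a fixed $N$ (at most $2^{\omega(N)}$), we may treat $\sigma$ as fixed; the implied constant in Theorem \ref{thm:main-thm-nonsquare} depends only on $r,m,N$, so it does not matter which $\sigma$ we pick.

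The case $r=0$ is trivial because $c_0=1$. For $r\ge 1$, set
\[
A := \frac{\sigma_1(m)}{m}\cdot\frac{1}{12}\cdot\frac{\psi(N)}{2^{\omega(N)}} .
\]
This is a strictly positive constant depending only on $m$ and $N$. Theorem \ref{thm:main-thm-nonsquare} then gives
\[
c_{2r}(m,N,k,\sigma)=\frac{(-1)^r}{(2r)!!}A^r(k-1)^r+E, \qquad |E|\le C_{r,m,N}\,k^{r-1}.
\]

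The main term has exact sign $(-1)^r$ and absolute value $\frac{A^r}{(2r)!!}(k-1)^r$. For $k\ge 2$ we have $(k-1)^r\ge (k/2)^r$, so the main term is at least $\frac{A^r}{2^r(2r)!!}k^r$ in absolute value. It therefore dominates $|E|$ as soon as
\[
k>\frac{2^r(2r)!!\,C_{r,m,N}}{A^r}.
\]
For every even $k$ above this threshold, $c_{2r}$ has sign $(-1)^r$, so only finitely many $k$ can fail.

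There is no genuine obstacle here; the one point to check is that $A>0$, which is immediate since $\sigma_1(m)\ge 1$ and $\psi(N)\ge 1$. If an explicit threshold is wanted, one would need an explicit value of $C_{r,m,N}$, which requires tracing the constants through Lemma \ref{lem:pj-bound-nonsquare} and Proposition \ref{prop:final-trace-formula}. That is not needed for the qualitative statement.
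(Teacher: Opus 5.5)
Your proof is correct and matches the paper, which deduces this corollary immediately from Theorem~\ref{thm:main-thm-nonsquare}: the main term is $(-1)^r$ times a positive constant depending only on $r,m,N$ times $(k-1)^r$, while the error is $O_{r,m,N}(k^{r-1})$ uniformly in $\sigma$. Your threshold $k > 2^r(2r)!!\,C_{r,m,N}/A^r$ simply makes the ``all but finitely many'' explicit.
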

\addtocounter{theorem}{-1}
}

In contrast to the even-indexed coefficients, the behavior of the odd-indexed coefficients is determined by the behavior of the trace $- c_1$. Hence in order to obtain an analogous result for the odd-indexed coefficients $c_{2r+1}$, one would need to bound $\Tr_{S^\sigma_k(N)}T_m'$ away from $0$.


\section{Infinite families of coefficients with opposing signs} \label{sec:counterexample}
In Corollary \ref{cor:even-coef-sign-nonsquare}, we showed that $c_{2r}$ has sign $(-1)^r$ asymptotically as $k \to \infty$. In this section, we show that the analogous result does not hold asymptotically in $N$. 
{
\renewcommand{\thetheorem}{\ref{thm:counterexample}}
\begin{theorem} 
    Fix a non-square $m\geq1$, and an even integer $k\geq 2$. Then there exists an infinite family of pairs $(N,\sigma)$ such that $\sgn c_{2}(m,N,k,\sigma) = +1$, and an infinite family of pairs $(N,\sigma)$ such that $\sgn c_2(m,N,k,\sigma) = -1$. 
\end{theorem}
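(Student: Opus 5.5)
We would start from formula \eqref{eqn:second-coefficient-formula}, $c_2 = \frac12\big(p_1^2 - p_2\big)$, and use Proposition \ref{prop:final-trace-formula} to compute both terms. As in Lemma \ref{lem:pj-bound-nonsquare}, $p_2 = \sum_{d\mid m}\Tr_{S_k^\sigma(N)} T'_{m^2/d^2}$. Since $m^2/d^2$ is a square, each of these traces contributes a main term of size $\frac{d}{m}\frac{k-1}{12}\frac{\psi(N)}{2^{\omega(N)}}$, so $p_2 \asymp k\,\psi(N)/2^{\omega(N)}$, up to error terms of size $O(N^{1/2+\varepsilon})$. Consequently the sign of $c_2$ is decided by whether $p_1^2 = (\Tr_{S_k^\sigma(N)}T'_m)^2$ exceeds $p_2$ or not.

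For $m$ non-square there is no identity main term in $p_1$. What remains is a class number sum over discriminants $t^2-4m$, weighted by local factors. In the sign-pattern setting these factors come from the Atkin–Lehner contributions, which bring in terms like $\sigma(Q)$ times class numbers of orders of discriminant $-4Q$ (or $-Q$). The plan is to make $p_1$ large for the $+1$ family by taking $N=p$ prime and $\sigma(p)$ chosen so that the $W_p$ contribution, of size roughly $h(-4p)\approx p^{1/2}$, is counted with the favourable sign. The right specialization is the prime-level trace formula of Appendix \ref{sec:trace-formula-prime}. The key point is that for suitable $p$ the term in $\Tr T'_m$ involving $W_p$ has the shape $\frac{\sigma(p)}{2}\cdot(\text{count of }t\text{ with }t^2-4m\text{ related to }p)$. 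More precisely, the Fricke-twisted trace $\Tr(T_m W_N)$ is a sum of class numbers $H(t^2 \cdot\ldots - 4mN)$ of size $\asymp \sqrt{mN}$ times $L$-values.

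So the two families would be produced as follows. First, choose primes $N=p$ in arithmetic progressions so that the quadratic characters at the relevant discriminants behave uniformly; then $\Tr(T'_m W_p)$ is of size $N^{1/2}$ times a bounded $L$-value, and hence of exact order $N^{1/2}L(1,\chi)$. Since $\Tr_{S^\sigma}T'_m = \frac12\big(\Tr T'_m + \sigma(p)\Tr T'_mW_p\big)$, while $\Tr T'_m = O(N^{\varepsilon})$ for non-square $m$, we get $p_1^2 \asymp N\,L(1,\chi)^2$ against $p_2 \asymp \frac{\sigma_1(m)}{m}\frac{k-1}{24}N$. Both are of order $N$, so the sign depends on the constant. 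The appendix on class numbers (Appendix \ref{sec:class_number_lemma}) should give the asymptotics of the relevant family of class numbers. Using Dirichlet/Siegel-type control, we would pick progressions of primes for which $L(1,\chi_{-4mp})$-type values are large (many small primes split, giving $p_1^2>p_2$ and $c_2>0$). For the other family we would instead make them small (many small primes inert), or pick $N$ with many prime factors, where $p_2\sim N/2^{\omega(N)}$ but the twisted trace is suppressed, giving $c_2<0$.

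The main obstacle is this $c_2>0$ family. We need the $L$-value factor to be large enough to beat the explicit constant $\frac{\sigma_1(m)}{m}\frac{k-1}{24}$, which grows with the fixed $k$. To handle it, we would use a Chinese-remainder choice of $p$ forcing $\chi(q)=+1$ for all primes $q\le Y$ with $Y$ large (depending on $m,k$). This makes the Euler product, and hence the class number ratio $h(-4mp)/\sqrt{p}$, arbitrarily large. Dirichlet's theorem then supplies infinitely many such primes, and the remaining error terms are $O(N^{1/2+\varepsilon})$ smaller.
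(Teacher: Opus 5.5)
\textbf{The reduction matches the paper, but the extreme-value step has a genuine gap.} At prime level you obtain, as the paper does,
$c_2=\frac{1}{32m}H(4mp)^2-\frac{\sigma_1(m)}{m}\frac{k-1}{48}(p+1)+O_m(p^{1/2+\varepsilon})$.
Since $H(4mp)\asymp_m\sqrt p\,L(1,\chi_{\Delta p})$ for $p\equiv 1 \pmod{4m}$, both families reduce to finding infinitely many primes with $L(1,\chi_{\Delta p})$ above, respectively below, a constant depending on $m$ and $k$. The gap is in how you produce those primes.

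Forcing $\chi_{\Delta p}(q)=+1$ (or $-1$) for all primes $q\le Y$ with $Y$ \emph{fixed}, and then invoking Dirichlet's theorem, controls only finitely many Euler factors. The Euler product at $s=1$ converges only conditionally, and non-uniformly in the conductor, so this gives no information about $L(1,\chi_{\Delta p})$ itself. Even granting the Granville--Soundararajan approximation $L(1,\chi)\sim\prod_{q\le(\log Q)^{14}}(1-\chi(q)/q)^{-1}$ (Lemma \ref{lem:estimate-L-1-chi}), the uncontrolled primes in $(Y,(\log p)^{14}]$ could a priori shrink the product by a factor of order $\log Y/\log\log p$, or inflate it by the reciprocal. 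That factor is unbounded as $p\to\infty$ along your fixed progression. So the claim that this makes the Euler product, and hence $h(-4mp)/\sqrt p$, arbitrarily large does not follow. The same defect affects your ``many small primes inert'' construction for the $c_2<0$ family. Siegel-type control does not help either, since it only gives $L(1,\chi)\gg_\varepsilon q^{-\varepsilon}$.

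\textbf{The missing idea, supplied in Appendix \ref{sec:class_number_lemma}, is to let the forced range grow with $p$ while keeping $p$ only polynomially large in the modulus.} The construction runs as follows.
\begin{enumerate}
\item Prescribe $\chi_{\Delta p}(\ell)=\epsilon$ for \emph{all} primes $\ell\le x$ with $\ell\nmid 4m$.
\item In each of the $\#P_x=\prod(\ell-1)/2=M_x^{1-o(1)}$ admissible classes modulo $M_x$, take the least prime. By Linnik's theorem it is at most $M_x^6$, so all conductors are at most $Q_x=|\Delta|M_x^6$ and $\log\log Q_x\asymp\log x$.
\item Since $\#P_x$ exceeds the exceptional set of size $Q_x^{2/13}=M_x^{12/13+o(1)}$, some $p_x$ has $L(1,\chi_{\Delta p_x})$ approximated by the short product.
\item By Mertens, the tail over $(x,(\log Q_x)^{14}]$ then contributes only a bounded factor, giving $L(1,\chi_{\Delta p_x})^{\epsilon}\gg_m\log x\gg\log\log p_x$.
\end{enumerate}
Without this input, or a distribution theorem for $L(1,\chi_{\Delta p})$ over primes, your argument establishes neither family.

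Your alternative for the negative family, taking $N$ with many prime factors, is also unsupported. The available bound $|E_{k,\sigma}(m,N)|\le 48.99\, m^2\sigma_0(m)\sqrt N\log(4N)\sigma_0(N)^2$ is not reduced by $2^{\omega(N)}$. Its square therefore swamps $p_2\asymp N/2^{\omega(N)}$, and that route would need a much finer analysis.
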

\addtocounter{theorem}{-1}
}

\begin{proof}
    We restrict to the case where $N=p$ is prime. Then by Proposition \ref{prop:prime-trace-formula-appendix} and the fact that the Hurwitz class number $H(D)=O(D^{1/2+\varepsilon})$, we have 
    \begin{align*}
        \Tr_{S_k^\pm(p)}T_m'&=\pm \frac{(-1)^{k/2}}{4\sqrt{m}}H(4mp)+O_m(1), \\
        \Tr_{S_k^\pm(p)}T_m'^{\,2}
        &=\sum_{d\mid m}\Tr_{S_k^\pm(p)}T_{m^2/d^2}' \\
        &= \sum_{d\mid m} \lrp{\frac{d}{m}\frac{k-1}{24}(p+1)\pm \frac{(-1)^{k/2}d}{4m}H\lrp{4\frac{m^2}{d^2}p}+O_m(1)} \\
        &=\frac{\sigma_1(m)}{m}\frac{k-1}{24}(p+1)+O_m(p^{1/2+\varepsilon}).
    \end{align*}
    Then by \eqref{eqn:second-coefficient-formula}, 
    \begin{align*}
        c_2
        &= \frac{1}{2}\left(\left(\Tr_{S_k^\pm(N)} T'_m\right)^2 - \Tr_{S_k^\pm(N)} T'^{\,2}_m\right) \\
        &=\frac{1}{32m}H(4mp)^2-\frac{\sigma_1(m)}{m}\frac{k-1}{48}(p+1)+O_m(p^{1/2+\varepsilon}).
    \end{align*}
    Observe that this estimate for $c_2$ has two main terms; one proportional to $H(4mp)^2$, and one proportional to $-(p+1)$. 
    By Proposition \ref{prop:counterexample-prop}, there exists an infinite family of primes $p$ such that $H(4mp)^2 \gg_{m} p \lrp{\log \log p}^{2}$, so $c_2$ has sign $+1$ for sufficiently large $p$ in this family. 
    Similarly, there exists another infinite family of primes $p$ such that $H(4mp)^2 \ll_{m} p\,\lrp{\log \log p}^{-2}$, so $c_2$ has sign $-1$ for sufficiently large $p$ in this other family.
\end{proof}


\section{An extension to the newspace} \label{sec:extending-to-newspace}
We now extend our results to the Hecke polynomial over the newspace $S_{k}^{\new, \sigma}(N)$. Let $T_m'^{\,\new}$ denote the restriction of $T'_m$ to  $S_{k}^{\new, \sigma}(N)$, and recall that $c_r^\new (m,N,k,\sigma)$ denotes the $r$-th coefficient of the characteristic polynomial of $T_m'^{\,\new}$, given by 
\[T_m'^{\,\new}(N,k,\sigma)(x) = \sum_{r=0}^d c_r^\new(m,N,k,\sigma)x^{d-r},\]
where $d:=\dim S_k^{\new, \sigma}(N)$.

In this section, we only state our results for admissible sign patterns, where an admissible sign pattern is defined to be any sign pattern $\sigma$ for $N$ except for the cases where $4 \parallel N$ and $\sigma(4)=+1$. This condition is not restrictive; in fact, if $\sigma$ is an inadmissible sign pattern, there are no newforms with sign pattern $\sigma$ \cite{atkin-lehner}. In contrast, \cite{ross-vanlidth-wolf-xue} found that all admissible sign patterns are close to being equally proportioned.

Given an admissible sign pattern $\sigma$, we want to determine the behavior of $c_r^\new(m,N,k,\sigma)$ as $N+k\to \infty$. Although $c_r^\new (m,N,k,\sigma)$ is not technically defined when $\dim S_k^{\new,\sigma}(N)<r$, there are only finitely many such pairs $(N,k)$ (by \cite[Theorem 1.1]{ross-vanlidth-wolf-xue} or Proposition  \ref{prop:final-trace-formula} at $m=1$). Note that this property only holds because we have restricted to the case of admissible sign patterns.

One can then extend all of the main results of this paper (namely Theorems \ref{thm:main-thm-square} and \ref{thm:main-thm-nonsquare}, Corollaries \ref{cor:coef-sign-square} and \ref{cor:even-coef-sign-nonsquare}, and Theorem \ref{thm:counterexample}) to the newspace. We omit the proofs of these results since they all follow by identical arguments as before.

We now state the analogous results. In the following, $\psi^\new(N)$ denotes 
the multiplicative function defined in \eqref{eq:psi-new-def}.

\begin{theorem}[{cf. Theorem \ref{thm:main-thm-square}}] \label{thm:main-thm-square-newspace}
    Fix an integer $r\geq0$ and a square $m\geq 1$. Then for $N\geq 1$ coprime to $m$,  admissible sign patterns $\sigma$ for $N$, and $k\geq2$ even, we have that
    \begin{multline*}
      c_r^{\new} (m,N,k,\sigma)=\frac{(-1)^r}{r!}\lrp{\frac{1}{\sqrt{m}}\frac{k-1}{12}\frac{\psi^\new(N)}{2^{\omega(N)}}
        \prod_{p^{\ell}||N}\lrp{1+\sigma(p^\ell)\frac{-\mathbbm{1}_{\ell=2}}{p^2-p-1} }
        }^r \\
        +O_{r,m}(k^{r-1}N^{r-1/2+\varepsilon}) .
    \end{multline*}
\end{theorem}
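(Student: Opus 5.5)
The argument copies the proof of Theorem \ref{thm:main-thm-square}, with the newspace trace estimate from Proposition \ref{prop:final-trace-formula} replacing the full-space one. Write
\[
A^\new(N,k,\sigma) := \frac{1}{\sqrt{m}}\frac{k-1}{12}\frac{\psi^\new(N)}{2^{\omega(N)}}\prod_{p^\ell\parallel N}\lrp{1+\sigma(p^\ell)\frac{-\mathbbm{1}_{\ell=2}}{p^2-p-1}}.
\]
Let $\lambda_1,\dots,\lambda_d$ be the eigenvalues of $T_m'^{\,\new}$ on $S_k^{\new,\sigma}(N)$, and let $p_j^\new=\sum_i\lambda_i^j$ be their moments. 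The Newton--Girard identities (Lemma \ref{lem:newton-girard-id}) hold verbatim for the $c_r^\new$ and $p_j^\new$, since they are purely algebraic.

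First I establish the analogue of Lemma \ref{lem:pj-bound-square}:
\[
p_1^\new = A^\new(N,k,\sigma)+O_{r,m}(N^{1/2+\varepsilon}),\qquad p_j^\new=O_{r,m}(kN)\quad (1\le j\le r).
\]
The first estimate is the newspace case of Proposition \ref{prop:final-trace-formula} at square $m$. I expect the main term there to be exactly $A^\new$. The factor $\prod(1-\sigma(p^\ell)\mathbbm{1}_{\ell=2}/(p^2-p-1))$ should come from the local computation at primes with $p^2\parallel N$, where the identity contribution to the sign-pattern newspace trace is not evenly split between the two signs. I will take this from the appendix.

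For the second estimate, I bound $d=\dim S_k^{\new,\sigma}(N)$. By the $m=1$ case of the same proposition, $d=O(kN)$, because $\psi^\new(N)\le\psi(N)\ll N^{1+\varepsilon}$ and the product factor is bounded. In fact the crude bound $d\le\dim S_k(N)=O(kN^{1+\varepsilon})$ already suffices after adjusting $\varepsilon$. Newforms in $S_k^{\new,\sigma}(N)$ are Hecke eigenforms satisfying Deligne's bound, so $|\lambda_i|\le\sigma_0(m)$, and therefore $|p_j^\new|\le\sigma_0(m)^j d$.

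Next I run the strong induction on $r$ exactly as before. The base case is $c_0^\new=1$. For the inductive step, $c_r^\new=-\frac1r\bigl[c_{r-1}^\new p_1^\new+\sum_{j\ge2}c_{r-j}^\new p_j^\new\bigr]$. The induction hypothesis gives $c_{r-1}^\new=\frac{(-1)^{r-1}}{(r-1)!}(A^\new)^{r-1}+O(k^{r-2}N^{r-3/2+\varepsilon})$, and for $j\ge2$ it gives $c_{r-j}^\new=O(k^{r-2}N^{r-2+\varepsilon})$, using $A^\new=O(kN)$. The cross terms are then $O(k^{r-1}N^{r-1/2+\varepsilon})$ and the tail terms are $O(k^{r-1}N^{r-1+\varepsilon})$. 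Combining these reproduces the claimed main term $\frac{(-1)^r}{r!}(A^\new)^r$.

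The only genuine input beyond bookkeeping is the newspace trace estimate with error $O_m(N^{1/2+\varepsilon})$ uniformly in $\sigma$ and $k$, which I take from Proposition \ref{prop:final-trace-formula}. The one subtlety is that $c_r^\new$ is undefined when $d<r$. The theorem should be read as asserting the estimate only where $d\ge r$, or with the convention $c_r^\new=0$ for $r>d$; the Newton--Girard recursion then still holds, since the $r$-th elementary symmetric polynomial of $d<r$ variables vanishes. Either reading leaves the argument unchanged.
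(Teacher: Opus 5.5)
Your proposal is correct and is the same argument the paper intends. The paper omits this proof, saying it follows by the identical Newton--Girard strong induction used for Theorem \ref{thm:main-thm-square}, with two inputs: the newspace case of Proposition \ref{prop:final-trace-formula} (main term exactly your $A^\new$, error $O_m(N^{1/2+\varepsilon})$ uniformly in $k$ and $\sigma$), and Deligne's bound together with $\dim S_k^{\new,\sigma}(N)=O(kN)$ to control the higher moments. Your treatment of the case $\dim S_k^{\new,\sigma}(N)<r$ (either restrict to $d\ge r$, or set $c_r^\new=0$, under which the Newton--Girard recursion still holds) is also consistent with the paper's remark that this happens only finitely often for admissible $\sigma$.
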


\begin{theorem}[{cf. Theorem \ref{thm:main-thm-nonsquare}}]
    \label{thm:main-thm-nonsquare-newspace}
    Fix an integer $r\geq 0$, a square $m\geq1$, and an integer $N\geq 1$ coprime to $m$. Then for admissible sign patterns $\sigma$ for $N$ and $k\geq 2$ even, we have that 
    \begin{multline*}
        c_{2r}^\new (m,N,k,\sigma)=\frac{(-1)^r}{(2r)!!}\lrp{\frac{\sigma_1(m)}{m}\frac{k-1}{12}\frac{\psi^\new(N)}{2^{\omega(N)}}
        \prod_{p^{\ell}||N}\lrp{1+\sigma(p^\ell)\frac{-\mathbbm{1}_{\ell=2}}{p^2-p-1} }}^r  \\
        +O_{r,m,N}(k^{r-1}), 
    \end{multline*}
    \begin{multline*}
        c_{2r+1}^\new (m,N,k,\sigma)=c_1^\new 
      \cdot  \frac{(-1)^r}{(2r)!!}\lrp{\frac{\sigma_1(m)}{m}\frac{k-1}{12}\frac{\psi^\new(N)}{2^{\omega(N)}}
        \prod_{p^{\ell}||N}\lrp{1+\sigma(p^\ell)\frac{-\mathbbm{1}_{\ell=2}}{p^2-p-1} }}^r \\
        +O_{r,m,N}(k^{r-1}) .
    \end{multline*}
\end{theorem}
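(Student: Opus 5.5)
The plan is to copy the proof of Theorem \ref{thm:main-thm-nonsquare} almost verbatim, with the newspace trace estimate of Proposition \ref{prop:final-trace-formula} in place of the full-space one. (The statement says ``a square $m$'', but the content, via the factor $\sigma_1(m)/m$ and the odd/even split, is clearly the non-square case; I treat $m$ as non-square, matching Theorem \ref{thm:main-thm-nonsquare}.) Set $d=\dim S_k^{\new,\sigma}(N)$. Let $\lambda_i$ be the eigenvalues of $T_m'$ on $S_k^{\new,\sigma}(N)$ and $p_j^\new=\sum_i\lambda_i^j$. Since $T_m'$ preserves $S_k^{\new,\sigma}(N)$ (Hecke operators commute with the $W_Q$ for $(m,N)=1$ and preserve the newspace), Lemma \ref{lem:newton-girard-id} holds verbatim with $c_r^\new$ and $p_j^\new$. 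Write
\[
A:=\frac{\sigma_1(m)}{m}\frac{k-1}{12}\frac{\psi^\new(N)}{2^{\omega(N)}}\prod_{p^{\ell}\parallel N}\Bigl(1+\sigma(p^\ell)\frac{-\mathbbm{1}_{\ell=2}}{p^2-p-1}\Bigr).
\]

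First I prove the newspace analogue of Lemma \ref{lem:pj-bound-nonsquare}.
\begin{itemize}
\item The newspace part of Proposition \ref{prop:final-trace-formula} should give
\[
\Tr_{S_k^{\new,\sigma}(N)}T'_n=\frac{\mathbbm{1}_{n=\square}}{\sqrt n}\frac{k-1}{12}\frac{\psi^\new(N)}{2^{\omega(N)}}\prod_{p^\ell\parallel N}(\cdots)+E^\new_{k,\sigma}(n,N),
\]
with an error that is $O_{n,N}(1)$ for fixed $n,N$.
\item Since $m$ is non-square, $p_1^\new=-c_1^\new=O_{m,N}(1)$.
\item For $p_2^\new$, expand $T_m'^{\,2}=\sum_{d\mid m}T'_{m^2/d^2}$ by the composition formula. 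This identity holds on the newspace because it is an operator identity. Each $m^2/d^2$ is a square with square root $m/d$, so summing $d/m$ over $d\mid m$ gives $\sigma_1(m)/m$. Hence $p_2^\new=A+O_{m,N}(1)$.
\item For $p_3^\new$, expand $T_m'^{\,3}$ into terms $T'_{m^3/d^2\delta^2}$. Every such index is non-square, since $m^3$ is non-square when $m$ is. So $p_3^\new=O_{m,N}(1)$.
\item For general $j$, Deligne's bound $|\lambda_i|\le\sigma_0(m)$ together with $d=\Tr T'_1=O_N(k)$ gives $p_j^\new=O_{r,m,N}(k)$.
\end{itemize}

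Next I run the same strong induction on $r$ as in the proof of Theorem \ref{thm:main-thm-nonsquare}. The base case is $c_0^\new=1$, and the claimed formula for $c_1^\new$ is trivial. For $c_{2r}^\new$, the Newton--Girard sum is dominated by $c_{2r-2}^\new p_2^\new$. The remaining terms are $O(k^{r-1})$ because $p_1^\new,p_3^\new=O(1)$ and the odd coefficients are $O(k^{r-1})$. The main term is $\frac{-1}{2r}\cdot\frac{(-1)^{r-1}}{(2r-2)!!}A^r$, which equals $\frac{(-1)^r}{(2r)!!}A^r$. For $c_{2r+1}^\new$, the two main contributions are $c_{2r}^\new p_1^\new$ and $c_{2r-1}^\new p_2^\new$. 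They combine as $c_1^\new A^r\frac{1}{2r+1}\bigl(\frac{(-1)^r}{(2r)!!}+\frac{(-1)^r}{(2r-2)!!}\bigr)$. Using $\frac{1}{(2r)!!}(1+2r)=\frac{2r+1}{(2r)!!}$, this is exactly $c_1^\new\frac{(-1)^r}{(2r)!!}A^r$.

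There is one technical point. $c_r^\new$ is only defined when $d\ge r$, but by \cite[Theorem 1.1]{ross-vanlidth-wolf-xue} this fails for only finitely many $k$ once $N$ is fixed and $\sigma$ is admissible. Those $k$ are absorbed into the $O_{r,m,N}$ constant, or one simply sets $c_r^\new=0$ there.

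The main obstacle is the first step: extracting the main term and the $O_{n,N}(1)$ error bound for the newspace trace from Proposition \ref{prop:final-trace-formula}. In particular one must confirm that the newspace main term is the displayed product, including the correction at $\ell=2$ coming from M\"obius-type inversion over oldforms with the sign-pattern twist. Once that is in place, the rest is routine bookkeeping identical to Section \ref{sec:main-theorem-nonsquare}.
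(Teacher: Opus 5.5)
Your proposal is correct and is exactly the argument the paper intends: the paper omits the proof as identical to that of Theorem \ref{thm:main-thm-nonsquare}, run with the newspace half of Proposition \ref{prop:final-trace-formula}. You are also right that the hypothesis should read ``non-square $m$'', since for square $m$ one has $p_1^\new \asymp k$ and the stated formula fails. The step you flag as the main obstacle is not actually one, because Proposition \ref{prop:final-trace-formula} already states the newspace main term with the displayed product and an error bound independent of $k$.
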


\begin{corollary}[{cf. Corollary \ref{cor:coef-sign-square}}]
    Fix an integer $r\geq 0$ and a square $m\geq 1$. Then $c_{r}^\new(m,N,k,\sigma)$ has sign $(-1)^r$ for all but finitely many triples $(N,k,\sigma)$, where $\sigma$ denotes an admissible sign pattern for $N$. 
\end{corollary}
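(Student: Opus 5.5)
The statement to prove is the newspace analogue of Corollary \ref{cor:coef-sign-square}. I will derive it from Theorem \ref{thm:main-thm-square-newspace} in the same way Corollary \ref{cor:coef-sign-square} was derived from Theorem \ref{thm:main-thm-square}. The task is to show that the main term
\[
M := \frac{1}{\sqrt{m}}\frac{k-1}{12}\frac{\psi^\new(N)}{2^{\omega(N)}}\prod_{p^\ell\parallel N}\lrp{1+\sigma(p^\ell)\frac{-\mathbbm{1}_{\ell=2}}{p^2-p-1}}
\]
satisfies $M^r \gg k^r N^{r-\varepsilon'}$ for some small $\varepsilon'$. Then $\frac{(-1)^r}{r!}M^r$ dominates the error $O_{r,m}(k^{r-1}N^{r-1/2+\varepsilon})$ once $N+k$ is large. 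Only finitely many pairs $(N,k)$ remain, and each $N$ has only finitely many sign patterns $\sigma$, so only finitely many triples $(N,k,\sigma)$ are excluded. The case $r=0$ is trivial, and for $k=2$ the factor $k-1=1$ causes no trouble.

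The key steps, in order, are as follows.

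First, I bound the product from below. Each factor with $\ell=2$ is at least $1-\frac{1}{p^2-p-1}$. For $p=2$ this equals $0$, but that case is exactly $4\parallel N$ with $\sigma(4)=+1$, which is excluded by admissibility. For $p\ge 3$ the factor is at least $1-\tfrac15=\tfrac45$. I also need to check the case $p=2$ with $\sigma(4)=-1$: the factor is then $2$. So over all primes the product is $\ge \prod_{p\mid N}\tfrac12 \ge 2^{-\omega(N)} \gg N^{-\varepsilon}$. In fact the sharper bound $\prod_{p\ge3}(1-\frac1{p^2-p-1})>0$ is a convergent product, which gives a constant lower bound.

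Second, I bound $\psi^\new(N)/2^{\omega(N)}$ from below by $N^{1-\varepsilon}$. This is the step I expect to be the main obstacle, because it depends on the precise definition \eqref{eq:psi-new-def} of $\psi^\new$, which I am treating as the usual multiplicative function counting newform dimensions. Its local factors are $p-1$ at $p\parallel N$, $p^2-p-1$ at $p^2\parallel N$, and $p^{\ell-3}(p^2-1)(p-1)$ for $\ell\ge3$. Each local factor is $\ge c\,p^\ell$ with $c\ge 1/4$, except the factor $p-1$ at $p=2$, $\ell=1$, which equals $1$ and so is still $\ge p^\ell/2$. Hence $\psi^\new(N)\ge 4^{-\omega(N)}N\gg N^{1-\varepsilon}$, using $\omega(N)=O(\log N/\log\log N)$ as in \cite[Theorem 2.10]{montgomery-vaughn}.

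Combining the two steps gives $M \gg_m kN^{1-\varepsilon}$. Choosing $\varepsilon$ small relative to $1/2$, the main term has size $\gg k^rN^{r-r\varepsilon}$, which exceeds the error term for all but finitely many $(N,k)$. This yields the claimed sign $(-1)^r$.
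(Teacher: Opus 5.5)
Your proposal is correct and follows the paper's argument: the paper only says this corollary follows from Theorem \ref{thm:main-thm-square-newspace} exactly as Corollary \ref{cor:coef-sign-square} follows from Theorem \ref{thm:main-thm-square}, namely by bounding the main term below by $\gg_m kN^{1-\varepsilon}$ so that its $r$-th power dominates the $O_{r,m}(k^{r-1}N^{r-1/2+\varepsilon})$ error, leaving finitely many $(N,k)$ and hence finitely many triples. You also supply the details the paper leaves implicit, namely the lower bound $\psi^\new(N)\ge 4^{-\omega(N)}N$ and the use of admissibility to keep the product over $p^\ell\parallel N$ bounded away from zero (its only vanishing factor occurs when $4\parallel N$ and $\sigma(4)=+1$).
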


\begin{corollary}[{cf. Corollary \ref{cor:even-coef-sign-nonsquare}}] 
\label{cor:asymptotic-sign-nonsquare-newspace}
    Fix an integer $r\geq 0$, a non-square $m\geq 1$, and an integer $N\geq 1$ coprime to $m$. Then for admissible sign patterns $\sigma$ for $N$, $c_{2r}^\new(m,N,k,\sigma)$ has sign $(-1)^r$ for all but finitely many weights $k$.
\end{corollary}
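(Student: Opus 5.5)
The plan is to deduce the corollary from Theorem \ref{thm:main-thm-nonsquare-newspace}, just as Corollary \ref{cor:even-coef-sign-nonsquare} follows from Theorem \ref{thm:main-thm-nonsquare}. The only real work is to check that the leading constant is strictly positive for every admissible $\sigma$.

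First fix $r$, a non-square $m$, $N$ coprime to $m$, and an admissible $\sigma$. The case $r=0$ is trivial, since $c_0^\new=1$ (for all $k$ with $\dim S_k^{\new,\sigma}(N)\ge 0$). For $r\ge1$, Theorem \ref{thm:main-thm-nonsquare-newspace} (read with $m$ non-square, as intended) gives
\[
c_{2r}^\new(m,N,k,\sigma) = \frac{(-1)^r}{(2r)!!}\,A^r (k-1)^r + O_{r,m,N}(k^{r-1}),
\]
where
\[
A := \frac{\sigma_1(m)}{12m}\frac{\psi^\new(N)}{2^{\omega(N)}}\prod_{p^\ell\parallel N}\Bigl(1-\sigma(p^\ell)\frac{\mathbbm{1}_{\ell=2}}{p^2-p-1}\Bigr)
\]
depends only on $m$, $N$, $\sigma$. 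If $A>0$, the main term has size $\asymp_{r,m,N} k^r$ and sign $(-1)^r$, while the error is $O(k^{r-1})$. Hence the main term dominates, and the sign of $c_{2r}^\new$ is $(-1)^r$ for all $k$ beyond an explicit threshold. Only finitely many $k$ remain, together with the finitely many $k$ for which $\dim S_k^{\new,\sigma}(N)<2r$, where the coefficient is not defined; these are noted in Section \ref{sec:extending-to-newspace}.

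The main obstacle is therefore showing $A>0$. Clearly $\sigma_1(m)/m>0$. For the product, each factor with $\ell\ne2$ equals $1$. For $\ell=2$ the factor is $1\mp 1/(p^2-p-1)$. When $p\ge3$ we have $p^2-p-1\ge5$, so the factor is at least $4/5>0$. When $p=2$ we have $p^2-p-1=1$, so the factor is $1-\sigma(4)$. This vanishes exactly when $\sigma(4)=+1$ with $4\parallel N$, which is precisely the inadmissible case that has been excluded. For admissible $\sigma$ the factor equals $2$.

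It remains to verify $\psi^\new(N)>0$ using its definition \eqref{eq:psi-new-def} as a multiplicative function. I expect its local factors to be of the shape $p-1$, $p^2-p-1$, $p^\ell(1-1/p)(1-1/p^2)$, and so on, which are positive for all primes. The one exception is $p^2-p-1$ at $p=2$, which equals $1$ and is still positive. If the definition instead has a factor vanishing at $4\parallel N$, its product with the $\sigma(4)$ factor would need to be treated jointly. In either case, positivity of the full leading coefficient for admissible $\sigma$ is also consistent with the dimension asymptotic (Proposition \ref{prop:final-trace-formula} at $m=1$, or \cite[Theorem 1.1]{ross-vanlidth-wolf-xue}), which says that $\dim S_k^{\new,\sigma}(N)$ grows linearly in $k$ with exactly this constant up to the factor $\sigma_1(m)/m$. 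That gives a fallback argument for $A>0$.
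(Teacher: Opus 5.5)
Your argument is correct and matches the paper's: the paper says only that the corollary follows from Theorem \ref{thm:main-thm-nonsquare-newspace} (where, as you noticed, ``square'' should read ``non-square'') in the same way Corollary \ref{cor:even-coef-sign-nonsquare} follows from Theorem \ref{thm:main-thm-nonsquare}, and your check that the leading constant is positive exactly for admissible $\sigma$ makes explicit the step the paper leaves implicit. Your hedge about $\psi^\new$ is unnecessary, so no fallback is needed: by \eqref{eq:psi-new-def} its local factors are $p-1$, $p^2-p-1$, and $p^{\ell-3}(p-1)^2(p+1)$ for $\ell=1$, $\ell=2$, and $\ell\ge 3$ respectively, and all of these are positive.
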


\begin{theorem}[{cf. Theorem \ref{thm:counterexample}}]
\label{thm:counterexample-newspace}
    Fix non-square $m\geq1$, and an even integer $k\geq 2$. Then there exists an infinite family of pairs $(N,\sigma)$ such that $\sgn c^\new_{2}(m,N,k,\sigma) = +1$, and an infinite family of pairs $(N,\sigma)$ such that $\sgn c^\new_2(m,N,k,\sigma) = -1$. 
\end{theorem}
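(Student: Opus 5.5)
Following the proof of Theorem \ref{thm:counterexample}, I would restrict to prime level $N=p$ with $p\nmid m$ and $p$ large. For prime level the newspace is what remains of the full space after removing the oldforms coming from level $1$. So the first step is to write down $\Tr_{S_k^{\new,\pm}(p)}T'_n$ by subtracting the old contribution from the formula of Proposition \ref{prop:prime-trace-formula-appendix}. For $f\in S_k(1)$, the two-dimensional oldspace spanned by $f(z)$ and $f(pz)$ is preserved by $W_p$, which swaps the two lines up to scaling. Hence $W_p$ has eigenvalues $+1$ and $-1$ there, and each Fricke sign space receives exactly one copy of each level-one eigenform. Therefore
\[
\Tr_{S_k^{\new,\pm}(p)}T'_n=\Tr_{S_k^{\pm}(p)}T'_n-\Tr_{S_k(1)}T'_n .
\]
For fixed $m$ and $k$, the subtracted term is $O_{m,k}(1)$ uniformly in $p$: it is bounded by $\sigma_0(n)\dim S_k(1)$ by Deligne. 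So the newspace traces satisfy the same estimates as in the proof of Theorem \ref{thm:counterexample}. Alternatively, one could read the same estimates directly off Proposition \ref{prop:final-trace-formula} in its newspace form.

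Next I plug these into \eqref{eqn:second-coefficient-formula} for $c_2^\new$. Using the composition formula $T'^{\,2}_m=\sum_{d\mid m}T'_{m^2/d^2}$, which still holds on the newspace because $T_m$ preserves $S_k^{\new,\pm}(p)$, the computation of Theorem \ref{thm:counterexample} goes through unchanged up to $O_{m,k}(1)$ errors. This gives
\[
c_2^\new(m,p,k,\pm)=\frac{1}{32m}H(4mp)^2-\frac{\sigma_1(m)}{m}\frac{k-1}{48}(p+1)+O_{m,k}(p^{1/2+\varepsilon}).
\]
Here $H(4m^2p/d^2)=O(p^{1/2+\varepsilon})$ absorbs the non-main parts of the second moment. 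The constant $(p+1)/24$ in the main term should agree with the newspace main term of Theorem \ref{thm:main-thm-square-newspace} for $\ell=1$, up to $O(1)$. I would check that consistency as a sanity check.

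Finally, I invoke Proposition \ref{prop:counterexample-prop}. It supplies an infinite family of primes with $H(4mp)^2\gg_m p(\log\log p)^2$, on which the first term dominates and $c_2^\new>0$ for $p$ large. It also supplies an infinite family with $H(4mp)^2\ll_m p(\log\log p)^{-2}$, on which the linear term dominates and $c_2^\new<0$. The sign pattern is $\sigma(p)=\pm1$, and either choice works since the $H(4mp)^2$ term is insensitive to the sign. Both patterns are admissible because $4\nmid p$. The only point needing care is that the old-form correction and the $\dim S_k^{\new,\pm}(p)\ge 2$ requirement, which holds for large $p$, do not interfere with the asymptotics. I expect this to be routine, since $k$ is fixed and all corrections are $O(1)$.
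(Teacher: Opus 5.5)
Your proposal is correct. Its skeleton is exactly what the paper means by ``identical arguments as before'': prime level $N=p$, trace asymptotics with the $H(4mp)$ term isolated, the identity \eqref{eqn:second-coefficient-formula}, and then Proposition \ref{prop:counterexample-prop}.

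The one genuine difference is how you obtain the newspace trace.

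\begin{itemize}
\item The paper uses the newspace half of Proposition \ref{prop:prime-trace-formula-appendix}. That comes from the newspace Skoruppa--Zagier formula in Lemma \ref{lem:initial-trace-formula}, with the $\alpha$ weights, specialized to prime level in Lemma \ref{lem:prime-involution-trace-estimate}.
\item You derive it from the full-space formula via the exact identity $\Tr_{S_k^{\new,\pm}(p)}T'_n=\Tr_{S_k^{\pm}(p)}T'_n-\Tr_{S_k(1)}T'_n$. This rests on two facts: $W_p$ interchanges $f(z)$ and $p^{k/2}f(pz)$ for $f\in S_k(1)$, and $T_n$ commutes with $f\mapsto f(pz)$ when $p\nmid n$.
\end{itemize}

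Your route is more self-contained, since it needs only classical oldform theory at prime level. Its cost is an error of $O_{m,k}(1)$ rather than the paper's $O_m(1)$; this is harmless because $k$ is fixed in the statement. The paper's route has the advantage of fitting uniformly into the general-$N$ newspace machinery, where the sign-pattern decomposition of the oldspace is messier.

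One side remark in your proposal is wrong. You cannot ``read the same estimates directly off Proposition \ref{prop:final-trace-formula}.'' For non-square $m$, that proposition only gives $\Tr_{S_k^{\new,\pm}(p)}T'_m=O(\sqrt{p}\log p)$. Squaring this gives $O(p\log^2 p)$, which does not beat the linear term $\frac{\sigma_1(m)}{m}\frac{k-1}{48}(p+1)$, so it settles neither sign. What makes the argument work is precisely isolating the $H(4mp)$ term, as in Proposition \ref{prop:prime-trace-formula-appendix}.
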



\section{Sign of the second coefficient} \label{sec:sign-of-second-coeff}

In this section, we use the explicit bounds on the error terms of the trace formula Proposition  \ref{prop:final-trace-formula} to determine how large $N$ and $k$ need to be in order for the $c_2(m,N,k,\sigma)$ sign predictions of Corollaries \ref{cor:coef-sign-square} and \ref{cor:even-coef-sign-nonsquare} to hold.

First, we consider the case when $m$ is a square.
\begin{proposition} \label{prop:second-coef-sign-square}
    Let $m$ be a square. Then $c_2(m,N,k,\sigma)$ has sign $+1$ for all $N,k,\sigma$ satisfying
    \begin{equation} \label{eq:second-coef-square-sign-condition}
        \frac{k-1}{12} \frac{\psi(N)}{2^{\omega(N)} \sqrt{N} \log(4N) \sigma_0(N)^2}  > 6.22 \cdot 10^{10}\, m^5 \sigma_0(m)^2.
    \end{equation}
\end{proposition}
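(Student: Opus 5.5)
We start from the identity \eqref{eqn:second-coefficient-formula},
\[
c_2=\tfrac12\Bigl(\bigl(\Tr_{S_k^\sigma(N)}T'_m\bigr)^2-\Tr_{S_k^\sigma(N)}T'^{\,2}_m\Bigr),
\]
and show that the square of the trace dominates. Write $A:=\frac{k-1}{12}\frac{\psi(N)}{2^{\omega(N)}}$. Since $m$ is a square, Proposition \ref{prop:final-trace-formula} gives
\[
\Tr_{S_k^\sigma(N)}T'_m=\frac{A}{\sqrt m}+E_{k,\sigma}(m,N).
\]
The Hecke composition formula gives $T'^{\,2}_m=\sum_{d\mid m}T'_{m^2/d^2}$, and every $m^2/d^2$ is a square. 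So Proposition \ref{prop:final-trace-formula} again yields
\[
\Tr T'^{\,2}_m=\sum_{d\mid m}\Bigl(\frac{d}{m}A+E_{k,\sigma}(m^2/d^2,N)\Bigr)=\frac{\sigma_1(m)}{m}A+\sum_{d\mid m}E_{k,\sigma}(m^2/d^2,N).
\]

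Next, bound the error terms using the explicit form of $E_{k,\sigma}$ in Proposition \ref{prop:final-trace-formula}. Assume, as the shape of \eqref{eq:second-coef-square-sign-condition} suggests, that it gives a bound of the form
\[
|E_{k,\sigma}(n,N)|\le C\, n^{a}\sigma_0(n)\sqrt N\log(4N)\sigma_0(N)^2
\]
for an explicit constant $C$ and a small power $a$. The contributions are the class-number terms, bounded via $H(D)\ll\sqrt D\log D$ with explicit constants, together with the hyperbolic and cusp contributions, bounded via divisor sums. Let $B:=\sqrt N\log(4N)\sigma_0(N)^2$. Substituting $n=m$ and $n=m^2/d^2\le m^2$, and bounding the number of divisors by $\sigma_0(m)$, gives $|E(m,N)|\le C_1 m^{\alpha}\sigma_0(m)B$ and $\sum_{d\mid m}|E(m^2/d^2,N)|\le C_2m^{\beta}\sigma_0(m)^2B$.

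Then expand:
\[
2c_2\ \ge\ \frac{A^2}{m}-\frac{2A}{\sqrt m}|E|-\frac{\sigma_1(m)}{m}A-\sum_{d\mid m}|E|.
\]
Since $\sigma_1(m)\le m\sigma_0(m)$ and $B\ge1$, every subtracted term is at most $A\cdot(\text{const})\,m^{\gamma}\sigma_0(m)^2B$, provided $A\ge B$ (which the hypothesis guarantees). Hence $c_2>0$ as soon as $A/B> K m^{5}\sigma_0(m)^2$. This is exactly the hypothesis \eqref{eq:second-coef-square-sign-condition}, with $K=6.22\cdot10^{10}$ coming out of the accumulated explicit constants, and the power $m^5$ absorbing the factor $\sqrt m$ from multiplying through by $m$ together with the $m^{\alpha}$ and $m^{\beta}$ growth of the error bounds.

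The main obstacle is the bookkeeping of explicit constants and powers of $m$ in the error term of Proposition \ref{prop:final-trace-formula}, evaluated at the arguments $m^2/d^2$ (which can be as large as $m^2$). In particular the error bound must be uniform in $k$, or at worst grow more slowly than $A\propto k-1$. The plan is to take the worst-case uniform bound for all divisors and simply sum over $d$. If that turns out too lossy to reach $m^5$, refine the sum using $\sum_{d\mid m}(m/d)^{2a}\le \sigma_0(m)m^{2a}$.
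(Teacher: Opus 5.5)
Your argument is correct, but it handles $\Tr_{S_k^\sigma(N)}T'^{\,2}_m$ differently from the paper.

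\textbf{The paper's route.} It never evaluates $\Tr_{S_k^\sigma(N)}T'^{\,2}_m$ through the trace formula in the square case. It uses Deligne's bound $|\lambda_i|\le\sigma_0(m)$ to get $\Tr T'^{\,2}_m\le\sigma_0(m)^2\dim S_k^\sigma(N)$, so the trace formula is only needed at $n=1$ and $n=m$. It also keeps $\tfrac12|E_{k,\sigma}(m,N)|^2$ inside the error term. It controls that term with an absolute numerical bound on $\frac{2^{\omega(N)}}{\psi(N)}\sqrt N\log(4N)\sigma_0(N)^2$ and with $\frac{12}{k-1}\le 12$. This $E^2$ term is essentially the whole source of the constant $6.22\cdot 10^{10}$.

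\textbf{Your route.} You use the composition formula $T'^{\,2}_m=\sum_{d\mid m}T'_{m^2/d^2}$, as the paper itself does in the non-square case. This gives the sharper main term $\frac{\sigma_1(m)}{m}A$, at the cost of error terms of size $m^4\sigma_0(m)^2B$. You also drop $E^2\ge 0$ entirely, which is legitimate since you only need a lower bound.

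\textbf{What your route actually gives.} You need not ``assume'' the shape of the error bound: Proposition \ref{prop:final-trace-formula} states $|E_{k,\sigma}(n,N)|\le 48.99\,n^2\sigma_0(n)B$, uniformly in $k$. With this, your three subtracted terms total at most $(3\cdot 48.99+1)\,A\,m^4\sigma_0(m)^2B$ once $A\ge 1$. So you get $c_2>0$ as soon as $A/B>148\,m^5\sigma_0(m)^2$. The stated proposition then follows a fortiori. The trade-off is:
\begin{itemize}
\item the paper's route needs only two trace-formula evaluations plus a black-box Deligne bound;
\item your route avoids Deligne and gives a far better constant.
\end{itemize}

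\textbf{Two small corrections.}
\begin{itemize}
\item The constant does not ``come out'' as $6.22\cdot 10^{10}$. You should conclude with some $K\le 6.22\cdot10^{10}$, not claim equality.
\item ``Bounding the number of divisors by $\sigma_0(m)$'' is not literally valid, because $\sigma_0(m^2/d^2)$ can exceed $\sigma_0(m)$ (e.g.\ $m=4$, $d=1$ gives $\sigma_0(16)=5>3$). The right bound is $\sum_{e\mid m}e^4\sigma_0(e^2)\le m^4\sigma_0(m)^2$, which you can verify prime by prime. In any case this term is only $O(B)$ rather than $O(AB)$, so a cruder $\sigma_0(m)^3$ bound would also suffice under the hypothesis.
\end{itemize}
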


\begin{proof}
    First note that by Deligne's bound, we have that 
    \begin{align*}
        \lrabs{\Tr_{S_k^\sigma(N)} T'^{\,2}_m}
        = \lrabs{\sum_{i=1}^d\lambda_i^2} 
        \le \sum_{i=1}^d\sigma_0(m)^2 
        = \sigma_0(m)^2 d,
    \end{align*}
    where 
    \[d= \Tr_{S_k^\sigma(N)}T_1'=\frac{k-1}{12}\frac{\psi(N)}{2^{\omega(N)}}+E_{k,\sigma}(1,N)\]
    by Proposition  \ref{prop:final-trace-formula}. 
    Recall by \eqref{eqn:second-coefficient-formula} that
    \[c_2=\frac{1}{2}\left(\left(\Tr_{S_k^\sigma(N)} T'_m\right)^2 - \Tr_{S_k^\sigma(N)} T'^{\,2}_m\right).\]
    Hence using Proposition  \ref{prop:final-trace-formula}, 
    \begin{align} \label{eqn:temp-c2-mainterm-F(m,N,k,sigma)}
        c_2=\frac{1}{2}\lrp{\frac{1}{\sqrt{m}}\frac{k-1}{12}\frac{\psi(N)}{2^{\omega(N)}}}^2+F_{\sigma}(m,N,k)
    \end{align}
    where
    \begin{align*}
        |F_{\sigma}(m,N,k)|\leq& \frac{1}{\sqrt{m}}\frac{k-1}{12}\frac{\psi(N)}{2^{\omega(N)}}|E_{k,\sigma}(m,N)|+\frac{1}{2}|E_{k,\sigma}(m,N)|^2 \\
        &+\frac{1}{2}\frac{k-1}{12}\frac{\psi(N)}{2^{\omega(N)}}\sigma_0(m)^2+\frac{1}{2}|E_{k,\sigma}(1,N)|\sigma_0(m)^2.
    \end{align*}
    Recall that by Proposition \ref{prop:final-trace-formula}, 
    $$|E_{k,\sigma}(m,N)| \le C m^2 \sigma_0(m) \sqrt{N} \log(4N) \sigma_0(N)^2$$
    where $C = 48.99$. Hence, we have
    \begin{align} 
        &\hspace{-3mm} |F_{\sigma}(m,N,k)| \\
        \le
        &\, \frac{1}{\sqrt{m}}\frac{k-1}{12}\frac{\psi(N)}{2^{\omega(N)}}(C  m^2 \sigma_0(m) \sqrt{N} \log(4N) \sigma_0(N)^2) \notag \\ 
        & +\frac{1}{2}\left(C m^2 \sigma_0(m) \sqrt{N} \log(4N) \sigma_0(N)^2\right)^2 \notag \\
        & +\frac{1}{2}\frac{k-1}{12}\frac{\psi(N)}{2^{\omega(N)}}\sigma_0(m)^2+\frac{1}{2}\left(C \sqrt{N} \log(4N) \sigma_0(N)^2\right)\sigma_0(m)^2 \notag \\
        =&\  C \, \frac{k-1}{12}  m^{3/2} \sigma_0(m) \frac{\psi(N)}{2^{\omega(N)}} \sqrt{N} \log(4N) \sigma_0(N)^2 \label{eq:error-term-one} \\
        & + \frac{C^2}{2} m^4 \sigma_0(m)^2 N \log(4N)^2 \sigma_0(N)^4  \label{eq:error-term-two} \\
        & + \frac{1}{2} \frac{k-1}{12} \sigma_0(m)^2  \frac{\psi(N)}{2^{\omega(N)}} \label{eq:error-term-three} \\
        & + \frac{C}{2} \sigma_0(m)^2\sqrt{N} \log(4N) \sigma_0(N)^2. \label{eq:error-term-four} \\
        =&     
        \Bigg[
            \frac{C}{m^{5/2} \sigma_0(m)} 
            +
            \frac{C^2}{2} \frac{12}{k-1} \frac{2^{\omega(N)}}{\psi(N)} \sqrt{N} \log(4N) \sigma_0(N)^2 \\
            &\quad+
            \frac{1}{2} \frac{1}{m^4} \frac{1}{\sqrt{N}\log(4N)\sigma_0(N)^2}
            +
            \frac{C}{2} \frac{12}{k-1} \frac{1}{m^4} \frac{2^{\omega(N)}}{\psi(N)}
        \Bigg] \\
        &\,\times \frac{k-1}{12} m^4\sigma_0(m)^2\frac{\psi(N)}{2^{\omega(N)}}\sqrt{N}\log(4N)\sigma_0(N)^2 \\
        \le&\, \lrb{
            C + \frac{C^2}{2} \cdot 12 \cdot 2240745 + \frac{1}{2\log(4)} + \frac{C}{2} \cdot 12
        } 
        \times \frac{k-1}{12} m^4\sigma_0(m)^2\frac{\psi(N)}{2^{\omega(N)}}\sqrt{N}\log(4N)\sigma_0(N)^2 \\
        \le&\, 3.11 \cdot 10^{10} \, \frac{k-1}{12} m^4\sigma_0(m)^2\frac{\psi(N)}{2^{\omega(N)}}\sqrt{N}\log(4N)\sigma_0(N)^2.
    \end{align}
    
    Note that in the second-to-last step, we used the fact that
    \begin{align}
        \frac{2^{\omega(N)}}{\psi(N)} \sqrt{N} \log(4N) \sigma_0(N)^2  \le 2154495.
    \end{align}
    This follows from the technique of \cite{ross-explicit-bounds}.

    Then observe by \eqref{eqn:temp-c2-mainterm-F(m,N,k,sigma)} that the sign of $c_2$ is $+1$ for all $N,k$ such that 
    $$|F_{\sigma}(m,N,k)|<\frac{1}{2}\lrp{\frac{1}{\sqrt{m}}\frac{k-1}{12}\frac{\psi(N)}{2^{\omega(N)}}}^2.$$
    In particular, this occurs for all $N,k$ such that
    $$3.11 \cdot 10^{10} \, \frac{k-1}{12}m^4 \sigma_0(m)^2\frac{\psi(N)}{2^{\omega(N)}}\sqrt{N}\log(4N)\sigma_0(N)^2<\frac{1}{2}\lrp{\frac{1}{\sqrt{m}}\frac{k-1}{12}\frac{\psi(N)}{2^{\omega(N)}}}^2,$$
    or equivalently
    $$\frac{k-1}{12} \frac{\psi(N)}{2^{\omega(N)} \sqrt{N} \log(4N) \sigma_0(N)^2} > 6.22\cdot 10^{10}\, m^5 \sigma_0(m)^2,$$
    as desired.
    \end{proof}

Next, we consider the case when $m$ is not a square.

\begin{proposition} \label{prop:second-coef-sign-nonsquare}
    Let $m$ be a non-square. Then $c_2(m,N,k,\sigma)$ has sign $-1$ for all $N,k,\sigma$ satisfying 
    \[ \frac{k-1}{12}
    \frac{\psi(N)}{2^{\omega(N)}N(\log 4N)^2\sigma_0(N)^4}
    > 2436\cdot \frac{m^5\sigma_0(m)^2}{\sigma_1(m)}. \]
    
\end{proposition}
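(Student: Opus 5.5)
We follow the template of Proposition \ref{prop:second-coef-sign-square}, with the roles of the two traces reversed: for non-square $m$ the trace of $T'_m$ has no main term, while the trace of $T_m'^{\,2}$ carries the main term. Recall from \eqref{eqn:second-coefficient-formula} that $c_2=\frac12\bigl((\Tr T'_m)^2-\Tr T'^{\,2}_m\bigr)$.

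\textbf{Step 1: expand both traces.} Since $m$ is not a square, Proposition \ref{prop:final-trace-formula} gives $\Tr_{S_k^\sigma(N)}T'_m=E_{k,\sigma}(m,N)$. The composition formula gives $\Tr T'^{\,2}_m=\sum_{d\mid m}\Tr T'_{m^2/d^2}$. Each $m^2/d^2$ is a square with $\sqrt{m^2/d^2}=m/d$, so each summand contributes $\frac{d}{m}\frac{k-1}{12}\frac{\psi(N)}{2^{\omega(N)}}+E_{k,\sigma}(m^2/d^2,N)$. Hence
\[c_2=-\frac12\frac{\sigma_1(m)}{m}\frac{k-1}{12}\frac{\psi(N)}{2^{\omega(N)}}+G_\sigma(m,N,k),\]
where
\[|G_\sigma|\le \tfrac12|E_{k,\sigma}(m,N)|^2+\tfrac12\sum_{d\mid m}|E_{k,\sigma}(m^2/d^2,N)|.\]

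\textbf{Step 2: bound $G_\sigma$.} Insert the explicit bound $|E_{k,\sigma}(n,N)|\le C n^2\sigma_0(n)\sqrt N\log(4N)\sigma_0(N)^2$ with $C=48.99$.

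The squared term is at most $\frac{C^2}{2}m^4\sigma_0(m)^2N\log(4N)^2\sigma_0(N)^4$. This term is what drives the shape of the stated condition: it has size $N(\log 4N)^2\sigma_0(N)^4$ and carries the factor $m^5\sigma_0(m)^2/\sigma_1(m)$ once we divide by the main term.

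The sum over $d\mid m$ uses the large arguments $n=m^2/d^2\le m^2$, giving a contribution that grows like $m^4\sigma_0(m^2)$. Since $k$ does not appear in $G_\sigma$, the weight-dependence of the threshold comes entirely from the main term.

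To reach the stated constant, I expect that one needs either a sharper form of Proposition \ref{prop:final-trace-formula} for square arguments or a more careful treatment of the sum. I would try to bound it by a quantity of the shape
\[C' m^4\sigma_0(m)^2 N\log(4N)^2\sigma_0(N)^4,\]
using $\sqrt N\log(4N)\sigma_0(N)^2\le N\log(4N)^2\sigma_0(N)^4/\log 4$ and $\sigma_0(m^2/d^2)\le\sigma_0(m)^2$. The point is to absorb this lower-order piece into the dominant squared term, so that $|G_\sigma|\le K\, m^4\sigma_0(m)^2N\log(4N)^2\sigma_0(N)^4$ with an explicit $K$.

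\textbf{Step 3: conclude.} The sign of $c_2$ is $-1$ whenever $|G_\sigma|<\frac12\frac{\sigma_1(m)}{m}\frac{k-1}{12}\frac{\psi(N)}{2^{\omega(N)}}$. Rearranging, this holds whenever
\[\frac{k-1}{12}\frac{\psi(N)}{2^{\omega(N)}N(\log 4N)^2\sigma_0(N)^4}>2K\frac{m^5\sigma_0(m)^2}{\sigma_1(m)}.\]

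\textbf{Main obstacle.} The hard part is the numerical bookkeeping in Step 2: collecting all the error pieces into a single clean constant, which must come out to $2K\le 2436$. Note that $C^2=2400.02$, so $C^2$ alone already nearly accounts for $2436$. The remaining pieces must therefore be shown to contribute only about $36$. This is why I would keep the crude sum bound from exceeding a small multiple of $\sqrt N$-type terms, and control it using $N\ge1$ and $\log 4>1$.
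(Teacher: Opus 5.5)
Your outline is the paper's proof. You use the same decomposition $c_2=-\frac12\frac{\sigma_1(m)}{m}\frac{k-1}{12}\frac{\psi(N)}{2^{\omega(N)}}+G_\sigma$, the same bound $\frac{C^2}{2}m^4\sigma_0(m)^2N\log(4N)^2\sigma_0(N)^4$ for the squared term, the same conversion $\sqrt N\log(4N)\sigma_0(N)^2\le N\log(4N)^2\sigma_0(N)^4/\log 4$, and the same final rearrangement.

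\textbf{The gap.} You never bound the divisor sum $\sum_{d\mid m}(m/d)^4\sigma_0(m^2/d^2)$, and the tool you name does not give the stated constant. After replacing $\sigma_0(m^2/d^2)$ by $\sigma_0(m)^2$, you are left with $\sigma_4(m)=\sum_{d\mid m}d^4$. This exceeds $m^4$ and can only be bounded uniformly by $\zeta(4)m^4\approx 1.0823\,m^4$. The second piece of $K$ then becomes $C\zeta(4)/(2\log 4)\approx 19.12$, so $2K\approx 2400.02+38.25\approx 2438.3>2436$. Nothing elsewhere can absorb this loss, because your conversion inequality is an equality at $N=1$. So your "about $36$" target is correct, but your route misses it.

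\textbf{The fix.} The paper uses the sharper multiplicative inequality
\[
\sum_{d\mid m}\Bigl(\frac{m}{d}\Bigr)^{4}\sigma_0\Bigl(\frac{m^2}{d^2}\Bigr)=\sum_{d\mid m}d^4\sigma_0(d^2)\le m^4\sigma_0(m)^2 .
\]
It suffices to check prime powers $p^a$ with $a\ge 1$:
\[
\sum_{j=0}^{a}(2j+1)p^{4j}\le \tfrac{16}{15}(2a+1)p^{4a}\le (a+1)^2p^{4a}.
\]
This gives
\[
\tfrac12\sum_{d\mid m}|E_{k,\sigma}(m^2/d^2,N)|\le \tfrac{C}{2}\,m^4\sigma_0(m)^2\sqrt N\log(4N)\sigma_0(N)^2 .
\]
Hence $K=\frac{C^2}{2}+\frac{C}{2\log 4}\approx 1200.01+17.67\le 1218$, and $2K\le 2436$ as stated. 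No sharper version of Proposition \ref{prop:final-trace-formula} for square arguments is needed.
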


\begin{proof}
    First, note that by the Hecke operator composition formula and Proposition  \ref{prop:final-trace-formula}, 
    \begin{align*}
        \Tr_{S^\sigma_k(N)}T_m'^2 &= \sum_{d\mid m}\Tr_{S_k^\sigma (N)}T'_{m^2/d^2}\\
        &=\sum_{d\mid m}\lrp{\frac{d}{m}\frac{k-1}{12}\frac{\psi(N)}{2^{\omega(N)}}+E_{k,\sigma}\lrp{\frac{m^2}{d^2},N}} \\ 
        &=\frac{\sigma_1(m)}{m}\frac{k-1}{12}\frac{\psi(N)}{2^{\omega(N)}}+\sum_{d\mid m}E_{k,\sigma}\lrp{\frac{m^2}{d^2},N}. 
    \end{align*}
    Observe that by Proposition \ref{prop:final-trace-formula},
    \begin{align*}
        \lrabs{\sum_{d\mid m}E_{k,\sigma}\lrp{\frac{m^2}{d^2},N}} 
        &\leq \sum_{d\mid m}C \lrp{\frac{m^2}{d^2}}^2\sigma_0\lrp{\frac{m^2}{d^2}}\sqrt{N}(\log 4N)\sigma_0(N)^2 \\ 
        &\leq C\lrp{\sum_{d\mid m}
        d^4\sigma_0(d^2)}\sqrt{N}(\log 4N)\sigma_0(N)^2 \\ 
        &= C m^4\sigma_0(m)^2 \sqrt{N}(\log 4N)\sigma_0(N)^2, \qquad \text{(by the technique of \cite{ross-explicit-bounds})}
    \end{align*}
    where $C=48.99$
     
    Now, recall that by \eqref{eqn:second-coefficient-formula},
    \[c_2=\frac{1}{2}\left(\left(\Tr_{S_k^\sigma(N)} T'_m\right)^2 - \Tr_{S_k^\sigma(N)} T'^{\,2}_m\right).\]
    Therefore we can write 
    \[c_2=\frac{-1}{2}\frac{\sigma_1(m)}{m}\frac{k-1}{12}\frac{\psi(N)}{2^{\omega(N)}}+G_{\sigma}(m,N,k),\]
    where by Proposition  \ref{prop:final-trace-formula},
    \begin{align} 
      |G_{\sigma}(m,N,k)| &\leq\frac{1}{2} \lrb{ |E_{k,\sigma}(m,N)|^2+
       C m^4\sigma_0(m)^2 \sqrt{N}\log (4N)\sigma_0(N)^2
       } 
      \\
      &\leq \frac{1}{2}\lrb{C^2m^4 \sigma_0(m)^2N\log (4N)^2\sigma_0(N)^4  +C m^4\sigma_0(m)^2 \sqrt{N}\log (4N)\sigma_0(N)^2} 
      \\ 
      &\leq \lrb{\frac{C^2}{2} + \frac{C}{2 \log 4}} m^4 \sigma_0(m)^2 N\log (4N)^2\sigma_0(N)^4 \\
      &\leq 1218\, m^4\sigma_0(m)^2 N\log (4N)^2\sigma_0(N)^4.
      \label{eq:second-coef-error-bound-square-number2}
    \end{align}
    Observe that since the main term of $c_2$ is always negative, $c_2$ will have sign $-1$ for all $k$ satisfying 
    \begin{equation} \label{2nd-coef-condition-nonsquare}
         |G_{\sigma}(m,N,k)|<\frac{1}{2} \frac{\sigma_1(m)}{m} \frac{k-1}{12}\frac{\psi(N)}{2^{\omega(N)}}. 
    \end{equation}
    In particular, by \eqref{eq:second-coef-error-bound-square-number2}, $c_2$ will have sign $-1$ 
    all $k$ such that 
    $$1218\cdot m^4\sigma_0(m)^2N(\log 4N)^2\sigma_0(N)^4 < \frac{1}{2}\frac{\sigma_1(m)}{m} \frac{k-1}{12}\frac{\psi(N)}{2^{\omega(N)}},$$
    or equivalently
    $$\frac{k-1}{12}\frac{\psi(N)}{2^{\omega(N)}N(\log 4N)^2\sigma_0(N)^4} >2436\cdot \frac{m^5\sigma_0(m)^2}{\sigma_1(m)},$$
    as desired.
\end{proof}


\appendix
\section{An explicit trace formula for \texorpdfstring{$T_m'$}{Tm} over \texorpdfstring{$S_k^\sigma(N)$}{Sk\^sigma(N)}}  \label{sec:trace-calculation-appendix}
In this section, we derive estimates for $\Tr_{S_k^\sigma(N)}T_m'$ and $\Tr_{S_k^{\new, \sigma}(N)}T_m'$. We remark that the proof proceeds by a similar strategy as \cite{ross-vanlidth-wolf-xue}. A key difference, however, is that we compute explicit constants for the error bound (needed to make our results effective), whereas \cite{ross-vanlidth-wolf-xue} only gave big-$O$ estimates.

For integers $k\geq 2$, let $p_k(t,m)$ denote the Lucas sequence of the first kind. Namely, $p_k(t,m)$ is the $x^{k-2}$ coefficient in the power series expansion of $(mx^2-tx+1)^{-1}$. Moreover, for integers $N\geq 1$ and $\Delta\leq 0$, define 
\[ H_N(\Delta)=\begin{cases}
    a^2b\lrp{\frac{\Delta/a^2b^2}{N/a^2b}}H(|\Delta/a^2b^2|) &\text{ if } a^2b^2\mid \Delta  \\
    0 &\text{otherwise,}
\end{cases}\]
where $a^2b :=  (N, \Delta)$ with $b$ squarefree, $(\frac{\cdot}{\cdot})$ denotes the Kronecker symbol, and $H(\cdot)$ denotes the Hurwitz-Kronecker class number. In addition, define $B(N)$ to be the greatest integer $d$ such that $d^2\mid N$. We then cite a trace formula for $T_m' \circ W_Q$.

\begin{lemma}[{Skoruppa-Zagier \cite{ZagierSkoruppa1988}, correction by Assaf \cite{assaf2024notetraceformula}}, see also \cite{ross-vanlidth-wolf-xue}] \label{lem:initial-trace-formula}
    For $N \ge 1$ coprime to $m$, $Q \parallel N$, and $k \ge 2$ even, we have
    \begin{align*}
        \Tr_{S_k(N)} T'_m \circ W_Q &= \sum_{\substack{N'\mid N \\ N/N' \text{ squarefree}}} \hspace{-5mm}
        \mu \left( \frac{Q}{(Q, N')} \right) s'_{k, N'}(m, (Q, N')), \\
        \Tr_{S_k^\new(N)} T'_m \circ W_Q &= \sum_{N'\mid N} \alpha \left( \frac{N}{N'} \right) s'_{k, N'}(m, (Q, N')).
    \end{align*}
    Here $\mu$ denotes the Möbius function, $\alpha$ is the multiplicative function defined on prime powers via
    \begin{equation*}
        \alpha \left( p^r \right) = 
        \begin{cases}
            -1 & r = 1 \text{ or } 2 \\
            1 & r = 3 \\
            0 & r \ge 4,
        \end{cases}
    \end{equation*}
    and
    \begin{align}
        s'_{k,N}(m,Q) :=
        -& \frac{1}{2m^{(k-1)/2}} \sum_{Q' \mid Q} \sum_s p_k \left( \frac{s}{\sqrt{Q'}}, m \right) H_{\frac{N}{Q}}\left( s^2 - 4mQ' \right) \label{eq:s-line-1} \\
        -& \frac{1}{2m^{(k-1)/2}} \sum_{m' \mid m} \min \left( m', \frac{m}{m'} \right)^{k-1} \left( B(Q), m + \frac{m}{m'} \right) \left( B \left( \frac{N}{Q} \right), m' - \frac{m}{m'} \right)\quad \label{eq:s-line-2} \\
        +& \frac{1}{m^{(k-1)/2}} \mathbbm{1}_{\substack{k = 2 \\ \frac{N}{Q} = \square}} \sigma_0(Q) \sigma_1(m), \label{eq:s-line-3}
    \end{align}
    with the summation over $s$ ranging over all $s \in \ZZ$ such that $s^2 \le 4mQ'$, $Q' \mid s$, and $\Big( \big( \frac{s}{Q'} \big)^2, \frac{Q}{Q'} \Big)$ is squarefree.
\end{lemma}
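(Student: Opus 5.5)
The plan is to deduce both identities from the Skoruppa--Zagier trace formula \cite{ZagierSkoruppa1988}, with the correction of Assaf \cite{assaf2024notetraceformula} to the correction terms \eqref{eq:s-line-2}--\eqref{eq:s-line-3}. The starting point is their basic trace function $s_{k,N}(m,Q)$, an Eichler--Selberg type expression computed from the action of $T_m\circ W_Q$ on $S_k(N)$.

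\textbf{Step 1 (normalization).} First I would match notation. Normalizing $T_m$ by $m^{-(k-1)/2}$ introduces the factors $m^{-(k-1)/2}$ in front of each of the three lines. Their sum over $Q'\mid Q$ of $p_k(s/\sqrt{Q'},m)\,H_{N/Q}(s^2-4mQ')$ is the elliptic (class number) contribution. The $\min(m',m/m')^{k-1}$ line is the hyperbolic/parabolic contribution, and the $k=2$ line is the Eisenstein correction. The restriction on $s$ (namely $Q'\mid s$ and $((s/Q')^2,Q/Q')$ squarefree) is exactly the parametrization of the relevant conjugacy classes of elements of the Atkin--Lehner coset. I would take this identification, and the resulting definition of $s'_{k,N}(m,Q)$, directly from the cited sources; Assaf's note only changes the gcd arguments in \eqref{eq:s-line-2}.

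\textbf{Step 2 (passing between full space and newspace).} Next I would use the Atkin--Lehner decomposition
\[
S_k(N)=\bigoplus_{M\mid N}\ \bigoplus_{d\mid N/M} f(dz)\text{-copies of } S_k^{\new}(M).
\]
The operator $T_m$ (with $(m,N)=1$) preserves each old class. The operator $W_Q$ acts on the span of the copies $f(p^j z)$, $0\le j\le a$ (where $p^a\parallel N/M$), as follows:
\begin{itemize}
\item if $p\nmid Q$, it acts diagonally, with trace $a+1$ times the level-$M$ factor;
\item if $p\mid Q$, it acts by the reversal $j\mapsto a-j$ times the local sign, so its trace picks up only the middle copy (when $a$ is even) together with the level-$M$ operator $W_{(Q,M)}$.
\end{itemize}
This gives a multiplicative convolution identity
\[
\Tr_{S_k(N)}T'_m\circ W_Q=\sum_{M\mid N}\beta_Q(N/M)\,\Tr_{S_k^\new(M)}T'_m\circ W_{(Q,M)},
\]
with an explicit multiplicative weight $\beta_Q$. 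Skoruppa--Zagier express the newspace trace as a convolution of $s'_{k,N'}$ against a universal multiplicative function. Inverting the convolution locally at each prime then yields the two stated formulas: the weight $\mu(Q/(Q,N'))$ on squarefree quotients $N/N'$ for the full space, and the weight $\alpha$ for the newspace. Since everything is multiplicative, it suffices to verify the identity of Dirichlet series prime by prime. In the two cases $p\nmid Q$ and $p\mid Q$, the series are finite rational functions of $p^{-s}$, and the values $\alpha(p^r)=-1,-1,1,0,\dots$ come out of this local computation.

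\textbf{Main obstacle.} I expect the hardest part to be the local bookkeeping at primes dividing $Q$. There, one has to check that replacing $Q$ by $(Q,N')$ inside $s'_{k,N'}$ is compatible with how $H_{N/Q}$ and $B(Q)$, $B(N/Q)$ depend on the level, so that the elliptic and hyperbolic terms invert consistently. I would handle this by checking the identity for $N=p^a$, $a\le 4$, with $Q\in\{1,p^a\}$ directly, and then invoking multiplicativity. For the precise constants, I would defer to the corrected statement in \cite{assaf2024notetraceformula} and to its restatement in \cite{ross-vanlidth-wolf-xue}.
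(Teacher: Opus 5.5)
The paper gives no proof of this lemma. It simply quotes it from Skoruppa--Zagier with Assaf's correction, as restated by Ross--Vanlidth--Wolf--Xue. Your Step~1 does the same for the definition of $s'_{k,N}(m,Q)$ and for the trace identity itself, so on the substantive content the two coincide.

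What you add is Step~2, showing that the two displayed identities are equivalent, and this is correct. Take $\beta_Q(p^c)=c+1$ for $p\nmid Q$ and $\beta_Q(p^c)=\mathbbm{1}_{2\mid c}$ for $p\mid Q$, and use $((Q,M),N')=(Q,N')$ for $N'\mid M$. Substituting the newspace formula into your oldform identity then gives the local weights
\[
\Big(\sum_{c\ge 0}\beta_Q(p^c)x^c\Big)(1-x)(1-x^2)=\begin{cases}1+x, & p\nmid Q,\\ 1-x, & p\mid Q.\end{cases}
\]
This is exactly $\mu\big(Q/(Q,N')\big)$ on squarefree $N/N'$ and $0$ otherwise. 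Read backwards, it also explains why $\alpha$ does not depend on $Q$. Strictly, this step composes with $\beta_Q$ rather than inverting anything, and it needs one of the two identities as input. So your route gives a consistency check and means only one identity has to be imported, but the formula for $s'_{k,N}$ is still entirely borrowed.

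For the same reason, your ``main obstacle'' is misplaced. Step~2 treats each $s'_{k,N'}(m,(Q,N'))$ as a black box, so no compatibility of $H_{N/Q}$, $B(Q)$, $B(N/Q)$ with the level ever enters. The weight identity holds for all exponents at once by the display above, so there is no need to check $a\le 4$ separately. Only the weights are multiplicative, not the traces, so verifying the trace formula at prime-power levels and then ``invoking multiplicativity'' would not prove it.

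Finally, your claim that Assaf's correction only touches the gcd factors in \eqref{eq:s-line-2} is unsupported and plays no role in your argument, so drop it.
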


We now give an estimate of $s'_{k,N}(m,Q)$ from this trace formula. 


\begin{lemma}[{explicit version of \cite[Lemma 2.2]{ross-vanlidth-wolf-xue}}] \label{lem:s'-estimate}
    Let $m \ge 1$, $N\geq 1$ be coprime to $m$, $Q\parallel N$, and $k\geq 2$ be even. Then 
    \begin{align*}
        s'_{k,N}(m,Q) &= \frac{ \mathbbm{1}_{m=\square}}{\sqrt{m}} 
        \frac{k-1}{12}\frac{N}{Q} 
        -\frac{\mathbbm{1}_{m=\square}}{2}(B(Q),2)B\lrp{\frac{N}{Q}}  \\
        &\quad+ \frac{(-1)^{k/2}}{2\sqrt{m}}
        \sum_{\substack{Q'\mid Q\\ Q/Q' \text{squarefree}}}\hspace{-5mm}
        H_{\frac{N}{Q}}(-4mQ') 
        +E'_{k,Q}(m,N)
    \end{align*}
    where 
    \[|E'_{k,Q}(m,N)| \leq 64.75\cdot m^2\sigma_0(m)\sigma_0(N).\]
\end{lemma}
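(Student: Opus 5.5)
We estimate the three lines \eqref{eq:s-line-1}, \eqref{eq:s-line-2}, \eqref{eq:s-line-3} of $s'_{k,N}(m,Q)$ separately. Each main term in the statement is pulled out explicitly, and everything else is absorbed into $E'_{k,Q}(m,N)$. Throughout, write $p_k(t,m)=\frac{\rho^{k-1}-\bar\rho^{k-1}}{\rho-\bar\rho}$, where $\rho,\bar\rho$ are the roots of $x^2-tx+m$.

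\emph{Line \eqref{eq:s-line-1}.} When $s^2<4mQ'$ the roots satisfy $|\rho|=\sqrt{m}$, so $|p_k(s/\sqrt{Q'},m)|\le (k-1)m^{(k-2)/2}$, and more precisely $p_k/m^{(k-1)/2}=\sin((k-1)\theta)/(\sqrt{m}\sin\theta)$. Split the sum over $s$ into three parts.
\begin{enumerate}
\item The term $s=0$. Here $p_k(0,m)=(-1)^{k/2-1}m^{k/2-1}$, which gives exactly the Hurwitz main term $\frac{(-1)^{k/2}}{2\sqrt m}H_{N/Q}(-4mQ')$. The summation condition (that $(0,Q/Q')$ is squarefree) is what forces $Q/Q'$ to be squarefree.
\item The degenerate terms with $s^2=4mQ'$. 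These occur only when $m$ is a square and $Q'=1$ (since $(m,N)=1$). There $H_{N/Q}(0)$ and $p_k(\pm2\sqrt m,m)=(k-1)m^{(k-2)/2}$ produce the $\frac{k-1}{12}\frac{N}{Q}\frac{1}{\sqrt m}$ term, using $H(0)=-1/12$ together with the level-$N/Q$ normalization of $H_{N/Q}(0)$.
\item The remaining terms, $0<s^2<4mQ'$.
\end{enumerate}

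The third part is the main obstacle. A naive bound uses $|\sin((k-1)\theta)/\sin\theta|\le 1/|\sin\theta|$ with $|\sin\theta|=\sqrt{4mQ'-s^2}/(2\sqrt{mQ'})$, together with an explicit bound on $H_{N/Q}(s^2-4mQ')$. The plan is to bound $|H_{N/Q}(\Delta)|\le a^2b\,H(|\Delta|/a^2b^2)$, then use $H(D)\ll\sqrt D\log D$ explicitly and sum over $s$. This must still be done uniformly in $N$ with only a $\sigma_0(N)$ loss, which is tricky because $Q'$ can be as large as $N$.

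To obtain the uniform bound, exploit the constraint $Q'\mid s$. Write $s=Q's_0$ with $s_0^2\le 4m/Q'$. This leaves only $O(\sqrt{m/Q'})$ values of $s$, and forces $Q'\le 4m$ whenever $s\neq0$. Every class number involved then has argument at most $4mQ'\le16m^2$, so it is bounded by an explicit constant times $m\log m$. The extra factor $a^2b$ in $H_{N/Q}$ is at most $(N/Q,\Delta)$, which divides $\Delta\le 16m^2$ and so is again controlled by $m$. Summing over $Q'\mid Q$ contributes at most $\sigma_0(Q)\le\sigma_0(N)$, and summing over $s$ contributes at most $O(\sqrt m)$ terms. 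Carrying explicit constants through (for example $H(D)\le \frac{\sqrt D}{\pi}(\log D+2)$ or a similar crude explicit estimate) should give a bound of the shape $c\,m^2\sigma_0(m)\sigma_0(N)$; the generous power $m^2$ leaves room for crude estimates.

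\emph{Lines \eqref{eq:s-line-2} and \eqref{eq:s-line-3}.} In line \eqref{eq:s-line-2}, the terms with $m'\ne\sqrt m$ have $\min(m',m/m')^{k-1}/m^{(k-1)/2}\le1$. Each gcd factor is at most $2m$, the first because $B(Q)\mid N$ is coprime to $m$ so $(B(Q),m+m/m')\le m+m/m'$. This gives $O(m^2\sigma_0(m))$, but a factor $B(N/Q)$ could appear when $m'-m/m'=0$, i.e. $m'=\sqrt m$. That term exactly yields the main term $-\frac12(B(Q),2)B(N/Q)$, since $m+\sqrt m=\sqrt m(\sqrt m+1)$ is even and coprime to $B(Q)$ up to the factor $2$. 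Line \eqref{eq:s-line-3} is trivially bounded by $\sigma_0(N)\sigma_1(m)/\sqrt m\le m\sigma_0(N)$ when $k=2$. Adding the explicit constants from all pieces should give the claimed $64.75$.
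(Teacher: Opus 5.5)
Your decomposition is the paper's: you split $s'_{k,N}(m,Q)$ into its three lines and extract the $s=0$ terms, the $\Delta=s^2-4mQ'=0$ terms and the $m'=\sqrt m$ term as main terms. You then bound the remaining $s\neq 0$ terms using $|p_k(s/\sqrt{Q'},m)|\le 2m^{(k-1)/2}\sqrt{Q'/|\Delta|}$, $H(D)\le \sqrt D(\log D+2)/\pi$, and $Q'\mid s$. However, the step producing $-\frac12(B(Q),2)B(N/Q)$ is false as you argue it. Since $(B(Q),m)=1$, one has $(B(Q),m+\sqrt m)=(B(Q),\sqrt m+1)$, and this can contain odd primes. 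For $m=4$, $Q=9$ it equals $3$, not $(B(Q),2)=1$. The resulting discrepancy $\tfrac12\bigl[(B(Q),\sqrt m+1)-(B(Q),2)\bigr]B(N/Q)$ is unbounded in $N$ (it equals $B(N/9)$ in that example), so it cannot be absorbed into $E'_{k,Q}(m,N)$. With the gcd read literally as $(B(Q),m+\frac{m}{m'})$, the lemma would simply be false. The correct Skoruppa--Zagier factor is $\bigl(B(Q),m'+\frac{m}{m'}\bigr)$: the displayed $m+\frac{m}{m'}$ is a misprint, and the paper's proof uses $m'+\frac{m}{m'}$ both for the $m'=\sqrt m$ term and in the bound by $2m$. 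At $m'=\sqrt m$ this gives $(B(Q),2\sqrt m)=(B(Q),2)$ by coprimality.

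Your treatment of the degenerate terms is also incorrect, though the conclusion survives. From $s^2=4mQ'$, $Q'\mid s$ and $(m,Q')=1$ you get that $m$ and $Q'$ are squares with $Q'\mid 4$, so $Q'\in\{1,4\}$, not only $Q'=1$. When $4\mid Q$, the pair $Q'=1$, $s=\pm2\sqrt m$ is excluded by the summation condition, because $(s^2,Q)=(4m,Q)=4$ is not squarefree. In that case it is the pair $Q'=4$, $s=\pm4\sqrt m$ that contributes. In every case exactly $2\cdot\mathbbm{1}_{m=\square}$ terms survive, which is what the paper checks.

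Two smaller points. First, in the $s\ne0$ estimate you must let the factor $\sqrt{Q'/|\Delta|}$ from $p_k$ cancel against the $\sqrt{|\Delta|}/(ab)$ from the class number bound. This leaves $2a\sqrt{Q'}(\log|\Delta|+2)/\pi$ with $a\le\sqrt{|\Delta|}$. Bounding $a^2b$ and $H$ separately by powers of $m$, as your second paragraph suggests, overshoots $m^2$. Done with the cancellation, your sharper cutoff $Q'\le 4m$ gives about $\frac{32}{\pi}m^{3/2}\log(4em)\sigma_0(N)$. This is better than the paper's $\frac{64}{\pi}m^{3/2}\log(8em)\sigma_0(N)$ (which comes from $Q'\le16m$), so $64.75$ is reachable. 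Second, the inequality $\sigma_1(m)/\sqrt m\le m$ fails at $m=2$; use $\sigma_1(m)/\sqrt m\le m^2\sigma_0(m)$ instead.
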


\begin{proof}
    We evaluate the three terms \eqref{eq:s-line-3}, \eqref{eq:s-line-2}, and \eqref{eq:s-line-1} separately.
    
    First, note that $\sigma_0(Q)\leq \sigma_0(N)$ and $k\geq 2$, and so \eqref{eq:s-line-3} is bounded above by
    \[|\eqref{eq:s-line-3} |\leq \frac{\sigma_0(N)\sigma_1(m)}{\sqrt{m}}.\]

    Second, in the summation over $m'\mid m$ in \eqref{eq:s-line-2}, consider the case where $m'=\sqrt{m}$, which will occur only when $m$ is a square. The contribution of this term is exactly 
    \[
    \eqref{eq:s-line-2}_{m'=\sqrt m} =
    -\frac{\mathbbm{1}_{m=\square}}{2m^{(k-1)/2}}(\sqrt{m})^{k-1}\lrp{B(Q), 2\sqrt{m}}\lrp{B\lrp{\frac{N}{Q}},0}=-\frac{\mathbbm{1}_{m=\square}}{2}\lrp{B(Q),2}B\lrp{\frac{N}{Q}},\]
    since $(Q,m)=1$. In the case where $m' \ne \sqrt m$, note that 
    \begin{align*}
        \left(B(Q),m'+\frac{m}{m'}\right)\leq 2m, \qquad \text{ and } \qquad 
        \left( B\left(\frac{N}{Q}\right),m'-\frac{m}{m'}\right)\leq m.
    \end{align*}
    Thus, \eqref{eq:s-line-2} (excluding the $m'=\sqrt{m}$ case), 
    is bounded above by
    \begin{align*}
        |\eqref{eq:s-line-2}_{m'\ne\sqrt m}|
        \le
        \frac{1}{2m^{(k-1)/2}}\sum_{\substack{m'\mid m\\m'\neq \sqrt{m}}}\min\lrp{m',\frac{m}{m'}}^{k-1}(2m)(m)
        \leq \frac{(2m)(m)}{2}\sum_{m'\mid m}1 = m^2\sigma_0(m).
    \end{align*}
    
    Lastly, in the double summation of  \eqref{eq:s-line-1}, denote $\Delta  :=  s^2-4mQ'$, and break into three cases: when $\Delta < 0, s = 0$; when $\Delta < 0, s \ne 0$; and when $\Delta = 0$.

    \textbf{Case 1: $\Delta < 0, s = 0$. \\}
    In this case, we have that $p_{k}(0,m)= (-1)^{(k-2)/2} m^{(k-2)/2}$.   
    Then since the conditions $0\leq 4mQ'$ and $Q'\mid 0$ are always satisfied, the $\Delta<0, s= 0$ terms of \eqref{eq:s-line-1} are given by
    \begin{align}
        \eqref{eq:s-line-1}_{\Delta<0,s=0}
        &=
        -\frac{1}{2m^{(k-1)/2}}\sum_{\substack{Q'\mid Q\\ Q/Q' \text{squarefree}}} \hspace{-5mm}
        p_k(0,m)H_{\frac{N}{Q}}(-4mQ') \\
        &=\frac{(-1)^{k/2}}{2\sqrt{m}}
        \sum_{\substack{Q'\mid Q\\ Q/Q' \text{squarefree}}}\hspace{-5mm}
        H_{\frac{N}{Q}}(-4mQ').
    \end{align}

    \textbf{Case 2: $\Delta < 0, s \ne 0$. \\}
    In this case, letting $a^2b=\left(\frac{N}{Q}, \Delta \right)$, we have by \cite[proof of Lemma 2.2]{ross-vanlidth-wolf-xue}
    In this case, we have by  \cite[Lemma 2.2]{CLAYTON2024186} that 
    \[\left|p_k\left(\frac{s}{\sqrt{Q'}},m\right)\right|
    \leq \frac{2m^{(k-1)/2}}{\sqrt{\left|\frac{s^2}{Q'}-4m\right|}}
    =\frac{2m^{(k-1)/2}}{\sqrt{|\Delta|/Q'}}.\]
    Using
    \cite[Lemma 2.2]{griffin-ono-tsai}, we can also bound the Hurtwitz-Kronecker class number by 
    \[H(D)\leq \frac{\sqrt{D}(\log D+2)}{\pi}.\]
    Therefore, letting $a^2b=\left(\frac{N}{Q}, \Delta \right)$, we have that
    \begin{align}
        \left|H_{\frac{N}{Q}}(\Delta)\right| &\leq \left|a^2b\left(\frac{\Delta/a^2b^2}{N/a^2b}\right)H(|\Delta/a^2b^2|)\right| \notag\\ 
        &\leq \frac{a^2b\sqrt{|\Delta/a^2b^2|}(\log |\frac{\Delta}{a^2b^2}|+2)}{\pi} \notag \\ 
        &\leq \frac{a\sqrt{|\Delta|}(\log |\Delta|+2)}{\pi}.
        \label{eq:final-classno-bound}
    \end{align}

    Moreover, note that the number of terms in the summation over $s$ (in \eqref{eq:s-line-1}) with $s\neq 0$ is less than or equal to $4\sqrt{m/Q'}$ since $Q'\mid s$ and $|s|\leq 2\sqrt{mQ'}$. For $Q'>16m$, there are no such terms, so it suffices to consider $Q'\leq 16m$. Then for such $Q'$, we have by \eqref{eq:final-classno-bound} that 
    \begin{align*}
        \left|p_k\left(\frac{s}{\sqrt{Q'}},m\right)H_{\frac{N}{Q}}(\Delta)\right|
        &\le \frac{2m^{(k-1)/2}}{\sqrt{\Delta/Q'}}\frac{a\sqrt{|\Delta|}(\log |\Delta |+2)}{\pi}\\
        &=\frac{2m^{(k-1)/2}a\sqrt{Q'}(\log |\Delta|+2)}{\pi} \\
        &\le \frac{2m^{(k-1)/2}(8m)\sqrt{Q'}(\log (64m^2)+2)}{\pi} \\ 
        &=\frac{32}{\pi}m^{(k-1)/2}m\log(8em)\sqrt{Q'}
        ,
    \end{align*}
    where the last inequality comes from the fact that $a^2\leq |\Delta|\leq 4mQ'\leq 64m^2$.

    Now note that the number of terms in the summation over $Q' \mid Q$ is equal to $\sigma_0(Q)\leq \sigma_0(N)$. Thus, the  $\Delta<0, s\neq 0$ terms of \eqref{eq:s-line-1} are bounded by 
    \begin{align}
        |\eqref{eq:s-line-1}_{\Delta<0,s\ne 0}|
        &\le
        \frac{1}{2m^{(k-1)/2}} \cdot \sigma_0(N) \cdot 4\sqrt{\frac{m}{Q'}} \cdot \frac{32}{\pi}m^{(k-1)/2}m\log(8em)\sqrt{Q'} \\
        &= \frac{64}{\pi}m^{3/2}\log(8em)\sigma_0(N).
    \end{align}

    \textbf{Case 3: $\Delta = 0$.\\}
    Finally, we consider the $\Delta=0$ terms, which will make up the main term of $s'_{k,N}(m,Q)$. Recall that $(m,Q')=1$, and so $\Delta=s^2-4mQ'=0$ implies that $m$ and $Q'$ are both squares. Additionally, $(Q')^2\mid s^2=4mQ'$ implies that $Q'\mid 4$. Hence we must have $Q'=1$ or $Q'=4$. It is straightforward to verify that the first case occurs precisely when $m$ is a square and $4\nmid Q$ (for $Q'=1$, $s=\pm 2\sqrt{m}$), and the second case occurs precisely when $m$ is a square and $4\mid Q$ (for $Q'=4$, $s=\pm 4\sqrt{m}$). This means that in all cases, the number of $\Delta =0$ terms appearing in 
    \eqref{eq:s-line-1} is exactly $2\mathbbm{1}_{m=\square}$.  

    Moreover, when $\Delta = 0$, we have by \cite[Lemma 2.3]{CLAYTON2024186} that 
    \[p_k\left(\frac{s}{\sqrt{Q'}},m\right)=(k-1)m^{(k-2)/2},\] and 
    \[H_{\frac{N}{Q}}\left(s^2-4mQ'\right)=H_{\frac{N}{Q}}(0)=\frac{-1}{12}\frac{N}{Q}.\]
    Thus, the $\Delta=0$ terms of \eqref{eq:error-term-one} are given by
    \begin{align*}
    \eqref{eq:s-line-1}_{\Delta=0}
    &=
        -\frac{1}{2m^{(k-1)/2}}\sum_{Q'\mid Q}\sum_{\substack{s^2=4mQ' \\ Q' \mid s}}p_k\left(\frac{s}{\sqrt{Q'}},m\right)H_{\frac{N}{Q}}\left(s^2-4mQ'\right)\\
        &=-\frac{1}{2m^{(k-1)/2}}\left(2\mathbbm{1}_{m=\square}\right)\left((k-1)m^{(k-2)/2}\right)\left(\frac{-1}{12}\frac{N}Q{}\right) \\ 
        &=\frac{\mathbbm{1}_{m=\square}}{\sqrt{m}}\frac{k-1}{12}\frac{N}{Q}. 
    \end{align*}
    
    Collecting all of the above terms, we obtain that 
    \begin{align}
        s'_{k,N}(m,Q)
        &= \eqref{eq:s-line-1}_{\Delta=0}+
         \eqref{eq:s-line-2}_{m'=\sqrt m} + 
         \eqref{eq:s-line-1}_{\Delta<0,s=0} + 
        \eqref{eq:s-line-3} + \eqref{eq:s-line-2}_{m'\ne\sqrt m}  + \eqref{eq:s-line-1}_{\Delta<0,s\ne0}  \\
        &=:  \frac{\mathbbm{1}_{m=\square}}{\sqrt{m}}\frac{k-1}{12}\frac{N}{Q} - \frac{\mathbbm{1}_{m=\square}}{2}(B(Q),2)B\lrp{\frac{N}{Q}}  
        +\frac{(-1)^{k/2}}{2\sqrt{m}}
        \sum_{\substack{Q'\mid Q\\ Q/Q' \text{squarefree}}}\hspace{-5mm}H_{\frac{N}{Q}}(-4mQ') \\
        &\qquad + E'_{k,Q}(m,N),
    \end{align} 
    where,
    \begin{align*}
        |E'_{k,Q}(m,N)| &\leq \frac{\sigma_1(m)}{\sqrt{m}}\sigma_0(N)+m^2\sigma_0(m)+\frac{64}{\pi}m^{3/2}\log(8em)\sigma_0(N)  \\
        &\leq m^2\sigma_0(m)\sigma_0(N)+ m^2\sigma_0(m)\sigma_0(N) + \frac{64}{\pi}(3.08\cdot m^2\sigma_0(m))\sigma_0(N)  \\ 
        & \qquad \text{(since $\log(8em)\leq 3.08\cdot \sqrt{m}$)} \\
        &\leq 64.75\cdot m^2\sigma_0(m)\sigma_0(N),
    \end{align*}
    as desired.
\end{proof}

We can then use the above lemma to estimate $\Tr_{S_k(N)}T_m'\circ W_Q$. 

\begin{lemma}[{explicit version of \cite[Proposition 2.3]{ross-vanlidth-wolf-xue}}] \label{lem:involution-trace-estimate} 
    Let $m \ge 1$, $N \geq 1$ be coprime to $m$, $Q\parallel N$, and $k \geq 2$ be even. Then 
    \begin{align*}
      \Tr_{S_k(N)}T'_m \circ W_Q &= \frac{\mathbbm{1}_{m=\square}}{\sqrt{m}}\frac{k-1}{12}\psi(N) \mathbbm{1}_{Q=1} + E''_{k,Q}(m,N), \\ 
      \Tr_{S_k^{\new}(N)}T'_m\circ W_Q&=\frac{\mathbbm{1}_{m=\square}}{\sqrt{m}}\frac{k-1}{12}\psi^{\new }(N)\eta(Q)+ E''^{\,\new}_{k,Q}({m,N}),
    \end{align*}
    where $\psi$, $\psi^\new$, and $\eta$ are the multiplicative functions defined on prime powers via 
    \begin{align}
        \psi(p^r)&  :=  p^r\left(1+\frac{1}{p}\right), \\ 
        \psi^\new (p^r)&  :=  p^r\lrp{1-\frac{1}{p}
        -\frac{1}{p^2}\mathbbm{1}_{r\geq2}+
        \frac{1}{p^3}\mathbbm{1}_{r\geq 3}},  \label{eq:psi-new-def}\\ 
        \eta(p^r)&  :=  \frac{-\mathbbm{1}_{r=2}}{p^2-p-1}.
    \end{align}
    Note in particular that when $N$ is squarefree, $\eta(Q)=\mathbbm{1}_{Q=1}$. Moreover,
    \begin{align*}
        |E''_{k,Q}(m,N)| &\leq  48.99\cdot m^2\sigma_0(m)\sqrt{N}\log (4N)\sigma_0(N)^2,\\
        |E''^{\,\new}_{k,Q}(m,N)| &\leq 48.99\cdot m^2\sigma_0(m)\sqrt{N}\log (4N)\sigma_0(N)^2.
    \end{align*}
\end{lemma}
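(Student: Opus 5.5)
We plan to substitute the estimate of Lemma \ref{lem:s'-estimate} into the two formulas of Lemma \ref{lem:initial-trace-formula}, term by term. For the full space, the sum runs over $N'\mid N$ with $N/N'$ squarefree, with weight $\mu(Q/(Q,N'))$ and argument $s'_{k,N'}(m,(Q,N'))$. Write $Q_1=(Q,N')$; this is an exact divisor of $N'$. Each $s'_{k,N'}(m,Q_1)$ splits into four parts: the main term $\frac{\mathbbm{1}_{m=\square}}{\sqrt m}\frac{k-1}{12}\frac{N'}{Q_1}$, the term $-\frac{\mathbbm{1}_{m=\square}}{2}(B(Q_1),2)B(N'/Q_1)$, the class number sum, and $E'$. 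We handle these four parts separately.

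\textbf{Main term.} The sum $\sum_{N'}\mu(Q/(Q,N'))N'/(Q,N')$ is multiplicative in $N$, so it is enough to compute it at a prime power $p^r\parallel N$. If $p\nmid Q$, then $N'\in\{p^r,p^{r-1}\}$, and the local factor is $p^r+p^{r-1}=\psi(p^r)$. If $p^r\parallel Q$, then for $N'=p^r$ the factor is $\mu(1)\cdot 1=1$. For $N'=p^{r-1}$ we get $\mu(p^r/p^{r-1})\cdot 1=-1$ (or $\mu(p)$ when $r=1$), so the local sum is $0$. Hence the main term is $\psi(N)\mathbbm{1}_{Q=1}$, as claimed. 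For the newspace, the same local computation with $\alpha$ over $N'=p^{r-j}$, $j\le 3$, gives $\psi^\new(p^r)$ when $p\nmid Q$. When $p^r\parallel Q$ it gives $\sum_j\alpha(p^j)\cdot 1$ for the terms with $(Q,N')=N'$, where each term contributes $N'/Q_1=1$. For $r=2$ this is $1-1-1=-1$, and dividing by $\psi^\new(p^2)=p^2-p-1$ gives $\eta$. For $r\ne2$ the alternating sums vanish: for $r=1$ it is $1-1=0$, for $r\ge 4$ it is $1-1-1+1=0$, and $r=3$ also gives $0$. I will verify these small cases directly.

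\textbf{Error terms.} Bound everything else by absolute values. The number of $N'$ is at most $\sigma_0(N)$, and $|\mu|,|\alpha|\le1$. The term $E'$ then contributes at most $64.75\,m^2\sigma_0(m)\sigma_0(N)^2$. The $B$-term is at most $B(N')\le\sqrt N$, giving $\sqrt N\sigma_0(N)$ in total. The class number term is the dominant one. For each $Q'\mid Q_1$ we bound $|H_{N'/Q_1}(-4mQ')|$ by \eqref{eq:final-classno-bound}, namely $a\sqrt{4mQ'}(\log(4mQ')+2)/\pi$. Here $a^2b=(N'/Q_1,4mQ')$ with $(m,N)=1$, so $a^2\mid (N'/Q_1,4)$ and $a\le 2$. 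This gives roughly $\frac{4}{\pi}\sqrt{mN}(\log(4mN)+2)$ for each $Q'$. The number of $Q'$ is at most $\sigma_0(N)$, and including the prefactor $\frac{1}{2\sqrt m}$ the factor $\sqrt m$ cancels. Summing over $N'$ gives $O(\sqrt N\log(4mN)\sigma_0(N)^2)$. Finally, absorb $\log(4mN)+2\le c\, m\log(4N)$ and the $\sqrt N\ge1$, $\log 4N\ge\log 4$ normalizations, so that all pieces fit into the form $C\,m^2\sigma_0(m)\sqrt N\log(4N)\sigma_0(N)^2$.

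\textbf{Main obstacle.} The hardest step is the explicit numerical bookkeeping needed to reach the constant $48.99$. This constant is smaller than the $64.75$ in Lemma \ref{lem:s'-estimate}, so we cannot simply add the bounds. We must use $\log 4\approx1.386$ to absorb the $E'$ contribution, since $\sigma_0(N)^2\le\sqrt N\log(4N)\sigma_0(N)^2/\log 4$. We must also control the class number piece sharply, bounding $a$ and the logarithm carefully. A cleaner route is to exploit that only $Q'$ with $Q/Q'$ squarefree appear, and that $\log(4mQ')+2\le \log(4N)+\log(e^2m)$, where $\log(e^2m)\le c\,m$. The constants $\frac{64.75}{\log 4}$ and the class number constant $\frac{2}{\pi}(1+\ldots)$ would then be tallied to show their sum is at most $48.99$. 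If this naive tally overshoots, I would sharpen the $E'$ estimate by reusing its proof with $\sigma_0(Q_1)$ in place of $\sigma_0(N)$. Summing $\sigma_0(Q_1)$ over $N'$ is still at most $\sigma_0(N)^2$, but it can be grouped multiplicatively to gain constant factors.
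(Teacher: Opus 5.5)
Your proposal is correct and follows the paper's proof. You substitute Lemma \ref{lem:s'-estimate} into Lemma \ref{lem:initial-trace-formula}, bound the $B$-term by $\sqrt N\sigma_0(N)$, the class-number term (with $a\le 2$) by $\frac{2}{\pi}\sqrt N\log(4e^2mN)\sigma_0(N)^2$ and the $E'$-term by $64.75\,m^2\sigma_0(m)\sigma_0(N)^2$, then normalize by $m^2\sigma_0(m)\sqrt N\log(4N)\sigma_0(N)^2$. The only difference is that you verify the main-term identities $\psi(N)\mathbbm{1}_{Q=1}$ and $\psi^\new(N)\eta(Q)$ by a local computation, where the paper cites \cite{ross-vanlidth-wolf-xue}. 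The straightforward tally already closes without your fallback sharpening of $E'$: using $\log(e^2m)\le 2m^2\sigma_0(m)$ gives $\frac{64.75}{\log 4}+\frac{1}{\log 4}+\frac{2}{\pi}+\frac{4}{\pi\log 4}\approx 48.98$.
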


\begin{proof}
    First, we have by \cite[Proposition 2.3]{ross-vanlidth-wolf-xue} that
    \begin{equation} \label{eqn:trace-convolution-main-term}
        \sum_{\substack{N' \mid N \\ N/N' \text{squarefree}}} \hspace{-5mm}\mu\lrp{\frac{Q}{(Q,N')}} \frac{N'}{(Q,N')}=\psi(N)\mathbbm{1}_{Q=1}.
    \end{equation}

     Hence by Lemma \ref{lem:initial-trace-formula} and Lemma \ref{lem:s'-estimate}, we have that 
    \begin{align*}
        &\Tr_{S_k(N)}T_m'\circ W_Q \\
        &= \sum_{\substack{N' \mid N \\ N/N' \text{ squarefree}}} \hspace{-5mm} \mu\left(\frac{Q}{(Q,N')}\right)s'_{k,N'}(m,(Q,N')) \\
        &= \sum_{\substack{N'\mid N \\ N/N' \text{ squarefree}}}\hspace{-5mm}\mu\lrp{\frac{Q}{(Q,N')}}\Bigg[\frac{\mathbbm{1}_{m=\square}}{\sqrt{m}}\frac{k-1}{12}\frac{N'}{(Q,N')} -\frac{\mathbbm{1}_{m=\square}}{2}(B((Q,N')),2)B\lrp{\frac{N'}{(Q,N')}} \\
        &\hspace{52mm}+\frac{(-1)^{k/2}}{2\sqrt{m}}\sum_{\substack{Q'\mid (Q,N')\\(Q,N')/Q' \text{ squarefree}}}\hspace{-5mm}H_{\frac{N'}{(Q,N')}}(-4mQ') +E'_{k,(Q,N')}({m,N'})\Bigg] \\
        &= \frac{\mathbbm{1}_{m=\square}}{\sqrt{m}}\frac{k-1}{12}\psi(N)\mathbbm{1}_{Q=1} + 
        \sum_{\substack{N'\mid N \\ N/N' \text{ squarefree}}}\hspace{-5mm}\mu\lrp{\frac{Q}{(Q,N')}}\Bigg[-\frac{\mathbbm{1}_{m=\square}}{2}(B((Q,N')),2)B\lrp{\frac{N'}{(Q,N')}} \\
        &\hspace{52mm}+\frac{(-1)^{k/2}}{2\sqrt{m}}\sum_{\substack{Q'\mid (Q,N')\\(Q,N')/Q' \text{ squarefree}}}\hspace{-5mm}H_{\frac{N'}{(Q,N')}}(-4mQ') +E'_{k,(Q,N')}({m,N'})\Bigg] \\
        &=: \frac{\mathbbm{1}_{m=\square}}{\sqrt{m}}\frac{k-1}{12}\psi(N)\mathbbm{1}_{Q=1} + E''_{k,Q}(m,N). 
    \end{align*}

    We now bound each of the three terms from $E''_{k,Q}(m,N)$ separately. 
    
    First, observe that 
    \begin{align}\label{eq:term2-square-bound}
         \lrabs{-\frac{\mathbbm{1}_{m=\square}}{2}(B((Q,N')),2)B\lrp{\frac{N'}{(Q,N')}} }\leq \frac{1}{2}(2)\lrp{\sqrt{\frac{N'}{(Q,N')}}}\leq \sqrt{N'} \le \sqrt N,
    \end{align}
    so that
    \begin{align} \label{eq:E''-error-term-1}
        &\lrabs{\sum_{\substack{N'\mid N\\N/N' \text{ squarefree}}}
        \hspace{-5mm} \mu \lrp{\frac{Q}{(Q,N')}}\lrp{-\frac{\mathbbm{1}_{m=\square}}{2}(B((Q,N')),2)B\lrp{\frac{N'}{(Q,N')}} } } \leq \sum_{N'\mid N}\sqrt{N}
        \leq \sqrt{N}\sigma_0(N).\qquad
    \end{align}

    Second, let $a^2b=\lrp{\frac{N'}{(Q,N')}, -4mQ'}$ for $Q'\mid Q$, and recall by \eqref{eq:final-classno-bound} that
    \begin{align*}
        \lrabs{H_{\frac{N'}{(Q,N')}}(-4mQ')}&\leq \frac{a\sqrt{4mQ'}(\log(4mQ')+2)}{\pi}
        \\
        &\leq \frac{4\sqrt{mN}\log(4e^2mN)}{\pi},
    \end{align*}
    using the fact that $a\leq 2$ since $\lrp{N',m}=1$ and $\lrp{\frac{N'}{(Q,N')}, Q'}=1$. Therefore,    
    \begin{align}
        \lrabs{\frac{(-1)^{k/2}}{2\sqrt{m}}
        \sum_{\substack{Q'\mid Q\\Q/Q' \text{ squarefree}}}\hspace{-5mm}H_{\frac{N}{Q}}(-4mQ')} 
        &\leq \frac{1}{2\sqrt{m}}
        \sum_{Q'\mid Q}\frac{ 4\sqrt{mN}\log(4e^2mN)}{\pi} \notag \\ 
        &\leq \frac{2\sqrt{N}\log(4e^2mN)}{\pi}\sigma_0(Q)  \notag \\
        &\leq  \frac{2\sqrt{N}\sigma_0(N)\log(4e^2mN)
        }{\pi},  \label{term3-s=0-bound}
    \end{align}
    so that
    \begin{align}
    &\lrabs{\sum_{\substack{N'\mid N \\ N/N' \text{ squarefree}}}\hspace{-5mm}
    \mu\lrp{\frac{Q}{(Q,N')}}\frac{(-1)^{k/2}}{2\sqrt{m}}
    \hspace{-3mm}\sum_{\substack{Q'\mid (Q,N')\\(Q,N')/Q' \text{ squarefree}}} \hspace{-5mm} H_{\frac{N'}{(Q,N')}}(-4mQ') } \\
    &\leq \sum_{\substack{N'\mid N\\N/N' \text{ squarefree}}}\hspace{-5mm}\frac{2\sqrt{N'}\sigma_0(N')^2\log(4e^2mN')}{\pi} \\
    &\leq  \quad \frac{2}{\pi} \sqrt{N}\sigma_0(N)^2\log(4e^2mN). 
    \label{eq:E''-error-term-2}
    \end{align}

    Third, note that  
    \begin{align} 
        \lrabs{\sum_{\substack{N'\mid N\\ N/N' \text{squarefree}}}\hspace{-5mm}\mu \lrp{\frac{Q}{(Q,N')}}E'_{k,(Q,N')}({m,N')}}
        &\le \sum_{\substack{N'\mid N\\ N/N' \text{squarefree}}} \lrabs{\mu\lrp{\frac{Q}{(Q,N')}}E'_{k,(Q,N')}(m,N')} \\ 
        &\le\sum_{N'\mid N}
        |E'_{k,(Q,N')}(m,N')| \label{eq:E''-error-term-3}
    \end{align}

    Combining these three terms 
    \eqref{eq:E''-error-term-3},
    \eqref{eq:E''-error-term-1}, and \eqref{eq:E''-error-term-2} (and utilizing the bound for $E'_{k,Q}(m,N)$ from Lemma \ref{lem:s'-estimate}), we obtain that
    \begin{align*}
        &|E''_{k,Q}(m,N)| \\
        &\leq \sum_{N'\mid N}|E'_{k,(Q,N')}(m,N')|+
        \sqrt{N}\sigma_0(N)+\frac{2}{\pi}\sqrt{N}\sigma_0(N)^2\log(4e^2mN) \\ 
        &\leq 64.75\cdot m^2\sigma_0(m)\sigma_0(N)^2+
        \sqrt{N}\sigma_0(N)  +\frac{2}{\pi}\sqrt{N}\log(4N)\sigma_0(N)^2 +\frac{2}{\pi}\log(e^2m)\sqrt{N}\sigma_0(N)^2 \\ 
        &= \lrb{64.75\frac{1}{\sqrt{N}\log(4N)} + \frac{1}{m^2\sigma_0(m)}\frac{1}{\log(4N)\sigma_0(N)} + \frac{2}{\pi}\frac{1}{m^2\sigma_0(m)} + \frac{2}{\pi}\frac{\log(e^2m)}{m^2\sigma_0(m)}\frac{1}{\log(4N)}} \\
        &\quad \times m^2\sigma_0(m)\sqrt{N}\log(4N)\sigma_0(N)^2 \\ 
        &\leq 
        \lrb{
            64.75 \cdot \frac{1}{\log 4} +  1 \cdot \frac{1}{\log 4} + \frac{2}{\pi} +
            \frac{2}{\pi} \cdot 2 \cdot \frac{1}{\log 4}
        }
        \cdot  m^2\sigma_0(m)\sqrt{N}\log(4N)\sigma_0(N)^2 \\
        &\leq 48.99\cdot  m^2\sigma_0(m)\sqrt{N}\log(4N)\sigma_0(N)^2,
    \end{align*}
    as desired.

    To show the desired identity for the newspace, apply Lemmas \ref{lem:initial-trace-formula} and \ref{lem:s'-estimate} in a similar manner, and note that all error terms have bounds identical to the ones in the fullspace identity, and we bound both $\mu$ and $\alpha$ above by $|\mu(\cdot)|,|\alpha (\cdot)|\leq 1$. See \cite[Proposition 3.2]{ross-vanlidth-wolf-xue} for the calculation of the main term $\psi^\nw\!(N) \eta(Q)$. 
\end{proof}

Now, 
$\Tr_{S_k^\sigma(N)}T'_m$ and  $\Tr_{S_k^{\new,\sigma}(N)}T'_m$ can be expressed in terms of $\Tr_{S_k(N)}T_m'\circ W_Q$ and $\Tr_{S_k^{\new}(N)}T_m'\circ W_Q$ respectively:

\begin{lemma}[{\cite[Lemma 3.1]{ross-vanlidth-wolf-xue}}] \label{lem:involution-to-sign-trace}
    For $N\geq 1$ and a sign pattern $\sigma$ for $N$, 
    \begin{align*}
        \Tr_{S_k^\sigma(N)}T'_m &= \frac{1}{2^{\omega(N)}}\sum_{Q||N}\sigma(Q)\Tr_{S_k(N)}T'_m\circ W_Q, \\ 
        \Tr_{S_k^{\new, \sigma}(N)}T'_m &= \frac{1}{2^{\omega(N)}}\sum_{Q||N}\sigma(Q)\Tr_{S_k^\new(N)}T'_m\circ W_Q.
    \end{align*}
\end{lemma}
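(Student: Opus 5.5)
The plan is to realize $\Tr_{S_k^\sigma(N)}T'_m$ as the trace over all of $S_k(N)$ of $T'_m$ composed with the projection onto $S_k^\sigma(N)$, and to write that projection explicitly as a character average of the Atkin--Lehner involutions.

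\textbf{Group structure.} First I would record the standard facts about the operators $W_Q$ for $Q \parallel N$ acting on $S_k(N)$ (trivial character, $k$ even).
\begin{enumerate}
\item Each $W_Q$ is an involution.
\item They commute, and $W_Q W_{Q'} = W_{Q * Q'}$ with $Q * Q' = QQ'/(Q,Q')^2$.
\end{enumerate}
Hence $G=\{W_Q : Q\parallel N\}$ is an elementary abelian $2$-group of order $2^{\omega(N)}$. A sign pattern $\sigma$, being multiplicative on exact divisors, satisfies $\sigma(Q*Q')=\sigma(Q)\sigma(Q')$, so it is a character of $G$.

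\textbf{The projection.} Define
\[
P_\sigma := \frac{1}{2^{\omega(N)}}\sum_{Q\parallel N}\sigma(Q)W_Q .
\]
Character orthogonality for the finite abelian group $G$ shows that $P_\sigma$ acts as the identity on $S_k^\sigma(N)$. It kills $S_k^{\sigma'}(N)$ for $\sigma'\neq\sigma$, since there it acts by $2^{-\omega(N)}\sum_Q \sigma(Q)\sigma'(Q) = 0$. So $P_\sigma$ is exactly the projection onto $S_k^\sigma(N)$ along the decomposition $S_k(N)=\bigoplus_{\sigma'} S_k^{\sigma'}(N)$.

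\textbf{Commutation and trace.} Next, since $(m,N)=1$, the Hecke operator $T_m$ commutes with every $W_Q$. Therefore $T'_m$ preserves each $S_k^{\sigma'}(N)$ and commutes with $P_\sigma$. Working in a basis adapted to the sign-pattern decomposition, $T'_m\circ P_\sigma$ is block diagonal: it equals $T'_m$ on the $\sigma$-block and $0$ elsewhere. This gives
\[
\Tr_{S_k^\sigma(N)}T'_m=\Tr_{S_k(N)}T'_m\circ P_\sigma=\frac{1}{2^{\omega(N)}}\sum_{Q\parallel N}\sigma(Q)\Tr_{S_k(N)}T'_m\circ W_Q
\]
by linearity of the trace.

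\textbf{Newspace.} The newspace version follows by the same argument, once we note that $S_k^{\new}(N)$ is stable under both the $W_Q$ and $T_m$. The sign-pattern decomposition then restricts to it, and we run the argument inside $S_k^\new(N)$.

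\textbf{Main obstacle.} The only step needing care is the group-law fact $W_QW_{Q'}=W_{Q*Q'}$ holding exactly, with no scalar, on $S_k(N)$. I would cite Atkin--Lehner for this: for trivial nebentypus the matrices differ by an element of $\Gamma_0(N)$ times a scalar, and with the standard normalization of the weight-$k$ action that scalar acts trivially.
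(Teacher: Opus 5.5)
Your proof is correct, and it is the standard argument. The paper gives no proof of its own here and simply cites \cite[Lemma 3.1]{ross-vanlidth-wolf-xue}; your projection argument is the usual proof of that lemma: take $\frac{1}{2^{\omega(N)}}\sum_{Q\parallel N}\sigma(Q)W_Q$ as the projector onto $S_k^\sigma(N)$ (resp.\ $S_k^{\new,\sigma}(N)$), use that $T_m$ commutes with the $W_Q$ for $(m,N)=1$, and apply linearity of the trace. One cosmetic point: take the character orthogonality over the abstract group of exact divisors $Q\parallel N$ under $Q*Q'$, which has order $2^{\omega(N)}$, rather than over the set of operators $\{W_Q\}$, since $Q\mapsto W_Q$ need not be injective on $S_k(N)$; your computation $2^{-\omega(N)}\sum_{Q\parallel N}\sigma(Q)\sigma'(Q)=0$ already works at that level, so nothing changes.
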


Lemmas \ref{lem:involution-trace-estimate} and \ref{lem:involution-to-sign-trace} then yield the following trace estimate.


\begin{proposition}[{explicit version of \cite[Corollary 3.2]{ross-vanlidth-wolf-xue}}] \label{prop:final-trace-formula}
    For $m \ge 1$, $N \ge 1$ coprime to $m$, sign patterns $\sigma$ for $N$, and $k \ge 2$ even,
    \begin{align*}
        \Tr_{S_k^\sigma(N)}T'_m &= \frac{\mathbbm{1}_{m=\square}}{\sqrt{m}}\frac{k-1}{12}\frac{\psi(N)}{2^{\omega(N)}} +E_{k,\sigma}(m,N), \\
        \Tr_{S_k^{\new,\sigma}(N)}T'_m &= \frac{\mathbbm{1}_{m=\square}}{\sqrt{m}}\frac{k-1}{12}\frac{\psi^\new(N)}{2^{\omega(N)}} \prod_{p^r||N} \left( 1 + \sigma\left(p^r\right) \frac{-\mathbbm{1}_{r=2}}{p^2-p-1} \right) +E^{\new}_{k,\sigma}(m,N),
    \end{align*}
    where 
    \begin{align*}
        |E_{k,\sigma}({m,N})| &\le    48.99\cdot m^2\sigma_0(m)\sqrt{N}\log (4N)\sigma_0(N)^2, \\ 
        |E^\new_{k,\sigma}({m,N})| &\le   48.99\cdot m^2\sigma_0(m)\sqrt{N}\log (4N)\sigma_0(N)^2.
    \end{align*}
\end{proposition}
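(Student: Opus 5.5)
We combine Lemma \ref{lem:involution-to-sign-trace} with Lemma \ref{lem:involution-trace-estimate}. For the full space, substitute the estimate of Lemma \ref{lem:involution-trace-estimate} for each $\Tr_{S_k(N)}T'_m\circ W_Q$ into
\[
\Tr_{S_k^\sigma(N)}T'_m=\frac{1}{2^{\omega(N)}}\sum_{Q\parallel N}\sigma(Q)\Tr_{S_k(N)}T'_m\circ W_Q .
\]
The main terms carry the factor $\mathbbm{1}_{Q=1}$, so only $Q=1$ survives. Since $\sigma(1)=1$ by multiplicativity, the main term is exactly $\frac{\mathbbm{1}_{m=\square}}{\sqrt m}\frac{k-1}{12}\frac{\psi(N)}{2^{\omega(N)}}$. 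The error is
\[
E_{k,\sigma}(m,N)=2^{-\omega(N)}\sum_{Q\parallel N}\sigma(Q)E''_{k,Q}(m,N).
\]
There are exactly $2^{\omega(N)}$ exact divisors $Q\parallel N$, and each satisfies $|E''_{k,Q}(m,N)|\le 48.99\, m^2\sigma_0(m)\sqrt N\log(4N)\sigma_0(N)^2$. The triangle inequality therefore gives the same bound for $|E_{k,\sigma}(m,N)|$, because the average of $2^{\omega(N)}$ terms, each bounded by $B$, is bounded by $B$.

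For the newspace we do the same with the second identities of Lemmas \ref{lem:involution-to-sign-trace} and \ref{lem:involution-trace-estimate}. Here the main term becomes
\[
\frac{\mathbbm{1}_{m=\square}}{\sqrt m}\frac{k-1}{12}\frac{\psi^\new(N)}{2^{\omega(N)}}\sum_{Q\parallel N}\sigma(Q)\eta(Q).
\]
The key step is to factor this sum. Both $\sigma$ and $\eta$ are multiplicative on exact divisors, and an exact divisor $Q$ of $N$ corresponds to choosing, for each $p^r\parallel N$, either $1$ or $p^r$. Hence
\[
\sum_{Q\parallel N}\sigma(Q)\eta(Q)=\prod_{p^r\parallel N}\bigl(1+\sigma(p^r)\eta(p^r)\bigr),
\]
and substituting $\eta(p^r)=-\mathbbm{1}_{r=2}/(p^2-p-1)$ gives the stated product. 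The error term $E^\new_{k,\sigma}$ is bounded exactly as in the full-space case, using the bound on $E''^{\,\new}_{k,Q}$.

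There is no real obstacle, since all the analytic work is contained in Lemma \ref{lem:involution-trace-estimate}. The only points needing care are that averaging over the $2^{\omega(N)}$ divisors does not inflate the constant $48.99$, and that multiplicativity of $\sigma\eta$ justifies the Euler-product factorization.
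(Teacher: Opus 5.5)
Your proposal is correct and matches the paper's proof: substitute the estimates of Lemma \ref{lem:involution-trace-estimate} into Lemma \ref{lem:involution-to-sign-trace}, keep only the $Q=1$ main term, and bound the error as an average of $2^{\omega(N)}$ terms, each at most $48.99\, m^2\sigma_0(m)\sqrt N\log(4N)\sigma_0(N)^2$. Your factorization $\sum_{Q\parallel N}\sigma(Q)\eta(Q)=\prod_{p^r\parallel N}\bigl(1+\sigma(p^r)\eta(p^r)\bigr)$ just writes out the newspace main term, which the paper covers only by saying the argument is ``identical.''
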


\begin{proof}
    With Lemma \ref{lem:involution-to-sign-trace} and Lemma \ref{lem:involution-trace-estimate}, we have that
    \begin{align*}
 \Tr_{S_k^\sigma(N)}T'_m &= \frac{1}{2^{\omega(N)}} \sum_{Q||N} \sigma(Q) \Tr_{S_k(N)}T'_m \circ W_Q \\
        &= \frac{1}{2^{\omega(N)}} \sum_{Q||N} \sigma(Q)\lrp{\frac{\mathbbm{1}_{m=\square}}{\sqrt{m}}\frac{k-1}{12}\psi(N)\mathbbm{1}_{Q=1} +E_{k,Q}''(m,N)} \\
        &= \frac{\mathbbm{1}_{m=\square}}{\sqrt{m}}\frac{k-1}{12}\frac{\psi(N)}{2^{\omega(N)}} +\frac{1}{2^{\omega(N)}}\sum_{Q||N}\sigma(Q)E_{k,Q}''(m,N) \\
        &=: \frac{\mathbbm{1}_{m=\square}}{\sqrt{m}}\frac{k-1}{12}\frac{\psi(N)}{2^{\omega(N)}}
        +
        E_{k,\sigma}(m,N),
    \end{align*}
    where
    \begin{align*}
        |E_{k,\sigma}(m,N)|
        &\leq \frac{1}{2^{\omega(N)}}\sum_{Q\parallel N}\lrabs{E''_{k,Q}(m,N)} \\
        &\leq  \frac{1}{2^{\omega(N)}}\sum_{Q \parallel N} 48.99\cdot  m^2\sigma_0(m)\sqrt{N}\log(4N)\sigma_0(N)^2 \\ 
        &=48.99\cdot  m^2\sigma_0(m)\sqrt{N}\log(4N)\sigma_0(N)^2,
    \end{align*}
    verifying the desired result for the fullspace. The argument for the newspace is identical.
\end{proof}

\section{A specialization of the trace formula to prime level} \label{sec:trace-formula-prime}

Observe that the error term in the above trace formula (Proposition \ref{prop:final-trace-formula}) for $T_m'$ over $S_k^\sigma(N)$ 
is of the form $O_m(N^{1/2+\varepsilon})$. This is in stark constrast to the trace formula for $T_m'$ over $S_k(N)$, which has an error term of size $O(N^{\varepsilon})$ when $m$ is non-square \cite[Lemma 2.2]{ross-xue}.
Moreover, we remark that the exponent of $N$ in our error term $O_m(N^{1/2+\varepsilon})$ cannot be improved to $\frac{1}{2}$. Specifically, when $m$ is non-square and $N=p$ is prime, the trace formula has a term proportional to the class number $H(4mp)$ which is $\gg_m p^{1/2} \log \log p$ for a certain family of primes $p$. We use this fact in Section \ref{sec:counterexample} to show that an analogous result to Corollary \ref{cor:even-coef-sign-nonsquare} does not hold asymptotically in $N$.

To this end, we specialize the trace formula from Proposition \ref{prop:final-trace-formula} to the case where $N=p$ is prime.

\begin{lemma} \label{lem:prime-involution-trace-estimate}
    Let $m \geq 1$, $p$ be a prime not dividing $m$, and $k \geq 2$ be even. Then 
    \begin{align*}
        \Tr_{S_k(p)}T_m'\circ W_1 &= \frac{\mathbbm{1}_{m=\square}}{\sqrt{m}}\frac{k-1}{12}(p+1)+O_m(1), \\
        \Tr_{S_k^\new(p)}T_m'\circ W_1 &=\frac{\mathbbm{1}_{m=\square}}{\sqrt{m}} \frac{k-1}{12} (p-1) +O_m(1),
    \end{align*}
    and
    \begin{align*}
        \Tr_{S_k(p)}T'_m \circ W_p &= \frac{(-1)^{k/2}}{2\sqrt{m}} H(4mp) + O_m(1), \\
        \Tr_{S_k^\new(p)}T'_m \circ W_p &= \frac{(-1)^{k/2}}{2\sqrt{m}} H(4mp) + O_m(1).
    \end{align*}
\end{lemma}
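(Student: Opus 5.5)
The plan is to specialize Lemma \ref{lem:initial-trace-formula} to $N=p$ together with the estimate of Lemma \ref{lem:s'-estimate}. The only difference from Lemma \ref{lem:involution-trace-estimate} is that $N$ is now a fixed prime, so every term not growing with $p$ can be absorbed into $O_m(1)$. Only the main term and the class number term have to be tracked.

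For $Q=1$, the full-space formula sums over $N'\in\{1,p\}$, since $p/N'$ is squarefree in both cases. The $\mu$ factor is always $1$, so the trace is $s'_{k,p}(m,1)+s'_{k,1}(m,1)$. By Lemma \ref{lem:s'-estimate}:
\begin{itemize}
\item the main terms give $\frac{\mathbbm{1}_{m=\square}}{\sqrt m}\frac{k-1}{12}(p+1)$;
\item the $B$-term is $-\frac{\mathbbm{1}_{m=\square}}{2}B(p)=O(1)$, since $B(p)=1$;
\item the error $E'$ is $O_m(1)$, because $\sigma_0(p)=2$.
\end{itemize}
The remaining term is $\frac{(-1)^{k/2}}{2\sqrt m}H_{N/Q}(-4m)$ with $Q'=1$. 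Its level is $N/Q\in\{p,1\}$, and $|H_{p}(-4m)|\le 2H(4m)$ up to bounded gcd factors, so this term is $O_m(1)$. This gives the first formula. For the newspace, $\alpha(p)=-1$ and $\alpha(1)=1$, so the trace is $s'_{k,p}(m,1)-s'_{k,1}(m,1)$. The main terms then give $(p-1)$, and everything else is again $O_m(1)$.

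For $Q=p$ in the full space, the sum again runs over $N'\in\{1,p\}$:
\begin{itemize}
\item for $N'=p$ we have $(Q,N')=p$ and $\mu(1)=1$, giving $s'_{k,p}(m,p)$;
\item for $N'=1$ we have $(Q,N')=1$ and $\mu(p)=-1$, giving $-s'_{k,1}(m,1)=O_m(1)$.
\end{itemize}
In $s'_{k,p}(m,p)$ the level is $N/Q=1$. The main term is $\frac{\mathbbm{1}_{m=\square}}{\sqrt m}\frac{k-1}{12}$, which is $O_m(1)$. The $B$-term is bounded, and $E'=O_m(1)$.

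This leaves the class number sum over $Q'\mid p$ with $p/Q'$ squarefree, i.e.\ $Q'\in\{1,p\}$. The contributions are:
\begin{itemize}
\item $Q'=1$: $H_1(-4m)=H(4m)=O_m(1)$;
\item $Q'=p$: $H_1(-4mp)=H(4mp)$, since $H_1=H$ at level $1$.
\end{itemize}
This produces exactly $\frac{(-1)^{k/2}}{2\sqrt m}H(4mp)$. The newspace case is the same with $\alpha(1)=1$, $\alpha(p)=-1$, since the $N'=1$ term is again $O_m(1)$.

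The main obstacle is confirming that the $s\neq0$ terms of \eqref{eq:s-line-1} contribute $O_m(1)$ and not something growing with $p$. This is exactly where the uniformity of Lemma \ref{lem:s'-estimate} is needed. That lemma bounds these terms by $m^{3/2}\log(8em)\sigma_0(N)$. The bound holds because $Q'\mid s$ and $s^2<4mQ'$ force $Q'\le 16m$, so $Q'=p$ contributes nothing once $p>16m$; the finitely many smaller $p$ are absorbed into the $O_m$ constant. I would check this point carefully, along with the claim that the Kronecker twist in $H_{N/Q}$ is trivial at level $1$.
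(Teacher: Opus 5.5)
Your overall route matches the paper's: specialize Lemma \ref{lem:initial-trace-formula} to $N=p$ and insert Lemma \ref{lem:s'-estimate}. Your $W_1$ computations, giving $p+1$ and $p-1$, are fine.

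The gap is in the $W_p$ case. There you assert that $-s'_{k,1}(m,1)=O_m(1)$, and that the main term $\frac{\mathbbm{1}_{m=\square}}{\sqrt m}\frac{k-1}{12}$ of $s'_{k,p}(m,p)$ is $O_m(1)$. Neither holds. The implied constant in the lemma depends only on $m$, so it must be uniform in $k$. For square $m$, each of these quantities is $\frac{k-1}{12\sqrt m}+O_m(1)$, which is unbounded in $k$. Indeed $s'_{k,1}(m,1)=\Tr_{S_k(1)}T'_m$, which for $m=1$ equals $\dim S_k(1)$.

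This also conflicts with your own $Q=1$ computation. There you kept that same term from $s'_{k,1}(m,1)$ in order to get $p+1$ rather than $p$. The statement distinguishes $(p+1)$ from $(p-1)$ precisely because such terms are not absorbed into $O_m(1)$. As written, your argument only gives $\frac{(-1)^{k/2}}{2\sqrt m}H(4mp)+O_m(k)$. That would suffice for fixed $k$, as in Theorem \ref{thm:counterexample}, but it is weaker than the lemma.

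The missing observation is that the two $k$-growing terms cancel exactly. Both $s'_{k,p}(m,p)$ and $s'_{k,1}(m,1)$ have level $N/Q=1$, so they share the identical main term $\frac{\mathbbm{1}_{m=\square}}{\sqrt m}\frac{k-1}{12}$. They enter with coefficients $\mu(1)=1$ and $\mu(p)=-1$ in the full space, and $\alpha(1)=1$, $\alpha(p)=-1$ in the newspace. This is the $Q=p$ case of \eqref{eqn:trace-convolution-main-term}, whose right-hand side $\psi(N)\mathbbm{1}_{Q=1}$ vanishes.

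The paper does exactly this. It writes $s'_{k,p}(m,p)=\frac{\mathbbm{1}_{m=\square}}{\sqrt m}\frac{k-1}{12}+\frac{(-1)^{k/2}}{2\sqrt m}H(4mp)+O_m(1)$ and $s'_{k,1}(m,1)=\frac{\mathbbm{1}_{m=\square}}{\sqrt m}\frac{k-1}{12}+O_m(1)$, then subtracts.

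With this fix, the rest of your proposal goes through: the $B$-terms, $E'=O_m(\sigma_0(p))$, $H_1=H$, and $H(4m)=O_m(1)$. Your worry about the $s\neq 0$ terms is already settled by the uniform bound in Lemma \ref{lem:s'-estimate}.
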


\begin{proof}
    First, we note that by Lemma \ref{lem:s'-estimate}, 
    \begin{align*}
        s'_{k,1}(m,1) &= \frac{\mathbbm{1}_{m=\square}}{\sqrt{m}} 
        \frac{k-1}{12} - \frac{\mathbbm{1}_{m=\square}}{2} + \frac{(-1)^{k/2}}{2\sqrt{m}}H_{1}(-4m) + E'_{k,1}(m,1) \\ 
        &=\frac{\mathbbm{1}_{m=\square}}{\sqrt{m}} 
        \frac{k-1}{12} +O_m(1),
    \end{align*}
    and 
    \begin{align*}
        s'_{k,p}(m,p) &= \frac{\mathbbm{1}_{m=\square}}{\sqrt{m}} 
        \frac{k-1}{12} - \frac{\mathbbm{1}_{m=\square}}{2} 
        +\frac{(-1)^{k/2}}{2\sqrt{m}}(H_1(-4m)+H_1(-4mp)) + E'_{k,p}(m,p) \\ 
        &=\frac{\mathbbm{1}_{m=\square}}{\sqrt{m}} 
        \frac{k-1}{12} + \frac{(-1)^{k/2}}{2\sqrt{m}}H(4mp)+ O_{m}(1),
    \end{align*}
    since $E'_{k,p}(m,p)=O_m(\sigma_0(p))=O_m(1)$. Hence by Lemma \ref{lem:initial-trace-formula} ,
    \begin{align*}
      \Tr_{S_k(p)}T_m'\circ W_p 
      =& \,\mu\lrp{\frac{p}{(p,1)}}s'_{k,1}(m,(p,1))
         +\mu\lrp{\frac{p}{(p,p)}}s'_{k,p}(m,(p,p))   \\
      =& -s'_{k,1}(m,1)+s'_{k,p}(m,p) \\
      =& \,\frac{(-1)^{k/2}}{2\sqrt{m}}H(4mp) + O_m(1),
    \end{align*}
    as desired.
    
    Additionally, using Lemma \ref{lem:s'-estimate}, 
    \begin{align*}
        s'_{k,p}(m,1) &= \frac{\mathbbm{1}_{m=\square}}{\sqrt{m}} \frac{k-1}{12} p
            - \frac{\mathbbm{1}_{m=\square}}{2}
            + \frac{(-1)^{k/2}}{2\sqrt{m}}H_{p}(-4m) + E'_{k,1}(m,p) \\
            &=\frac{\mathbbm{1}_{m=\square}}{\sqrt{m}} \frac{k-1}{12} p+O_m(1),
    \end{align*}
    so that 
    \begin{align*}
        \Tr_{S_k(p)}T_m'\circ W_1 &= 
         \mu\lrp{\frac{1}{(1,1)}}s'_{k,1}(m,(1,1))+
         \mu\lrp{\frac{1}{(1,p)}}s'_{k,p}(m,(1,p)) \\
         &=s'_{k,1}(m,1)+s'_{k,p}(m,1)  \\
        &=\frac{\mathbbm{1}_{m=\square}}{\sqrt{m}} \frac{k-1}{12} (p+1)
        + O_m(1)
    \end{align*}
    as desired. The proof for the newspace is nearly identical.
\end{proof}

\begin{proposition} \label{prop:prime-trace-formula-appendix}
    Let $m\geq 2$, $p$ be a prime not dividing $m$, and $k\geq 2$ be even. Then 
    \begin{align*}
      \Tr_{S_k^\pm(p)}T_m' &= \frac{\mathbbm{1}_{m=\square}}{\sqrt{m}}\frac{k-1}{24}(p+1) \pm \frac{(-1)^{k/2}}{4\sqrt{m}}H(4mp)+O_m(1), \\ 
      \Tr_{S_k^{\new,\pm}(p)}T_m' &= \frac{\mathbbm{1}_{m=\square}}{\sqrt{m}}\frac{k-1}{24}(p-1) \pm \frac{(-1)^{k/2}}{4\sqrt{m}}H(4mp)+O_m(1). 
    \end{align*}
\end{proposition}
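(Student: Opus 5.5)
This follows directly from Lemma \ref{lem:involution-to-sign-trace} combined with Lemma \ref{lem:prime-involution-trace-estimate}. For $N=p$ prime, the exact divisors of $N$ are $Q=1$ and $Q=p$. We have $\omega(p)=1$, and the sign spaces $S_k^\pm(p)$ are exactly the sign-pattern spaces $S_k^\sigma(p)$ with $\sigma(1)=1$ and $\sigma(p)=\pm1$. Lemma \ref{lem:involution-to-sign-trace} then gives
\[
\Tr_{S_k^\pm(p)}T'_m=\frac12\Bigl(\Tr_{S_k(p)}T'_m\circ W_1 \pm \Tr_{S_k(p)}T'_m\circ W_p\Bigr),
\]
and the same identity holds on the newspace.

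Substituting the four estimates of Lemma \ref{lem:prime-involution-trace-estimate} gives the result. The $W_1$ term contributes $\frac{\mathbbm{1}_{m=\square}}{\sqrt m}\frac{k-1}{24}(p+1)+O_m(1)$, or $(p-1)$ in place of $(p+1)$ for the newspace. The $W_p$ term contributes $\pm\frac{(-1)^{k/2}}{4\sqrt m}H(4mp)+O_m(1)$. Halving an $O_m(1)$ error leaves it $O_m(1)$, and these are exactly the claimed formulas.

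The only step needing care is making sure Lemma \ref{lem:prime-involution-trace-estimate} is correct as stated, since it carries all the content. Its full-space part was derived from Lemma \ref{lem:s'-estimate}. The key points are the following.

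\begin{itemize}
\item $H_1(-4m)=H(4m)=O_m(1)$, because $\left(\tfrac{\cdot}{1}\right)=1$.
\item $H_p(-4m)=O_m(1)$, because $(p,4m)$ is $1$ or a power of $2$ bounded in terms of $m$.
\item The term $H_1(-4mp)=H(4mp)$ is the only one that survives the difference $s'_{k,p}(m,p)-s'_{k,1}(m,1)$.
\item The error terms $E'$ are $O_m(\sigma_0(p))=O_m(1)$.
\end{itemize}

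For the newspace part, I would write out Lemma \ref{lem:initial-trace-formula} with $\alpha(p)=-1$, so that
\[
\Tr_{S_k^\new(p)}T'_m\circ W_Q = s'_{k,p}(m,(Q,p)) - s'_{k,1}(m,1).
\]
Taking $Q=1$ gives main term $\frac{k-1}{12}(p-1)$. Taking $Q=p$ gives the same difference as in the full space, hence the same $H(4mp)$ term. This is the only bookkeeping step; I expect no real obstacle.
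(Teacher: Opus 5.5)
Your proposal is correct and follows the paper's own argument. You specialize Lemma~\ref{lem:involution-to-sign-trace} to $N=p$ (so $Q\in\{1,p\}$ and $2^{\omega(p)}=2$), substitute the estimates of Lemma~\ref{lem:prime-involution-trace-estimate}, and halve the $O_m(1)$ errors; your check of the newspace case via $\alpha(p)=-1$, giving $\Tr_{S_k^\new(p)}T'_m\circ W_Q = s'_{k,p}(m,(Q,p)) - s'_{k,1}(m,1)$, also correctly fills in the step the paper calls ``nearly identical.''
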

\begin{proof}
    By Lemma \ref{lem:involution-to-sign-trace} and Lemma \ref{lem:prime-involution-trace-estimate}, we have
    \begin{align*}
        \Tr_{S_k^\pm(p)}T_m'
        &=\frac{1}{2^{\omega(p)}}\lrp{\Tr_{S_k(p)}T_m'\circ W_1\pm\Tr_{S_k(p)}T_m'\circ W_p} \\
        &=\frac{1}{2}\lrp{\Tr_{S_k(p)}T_m'\circ W_1\pm\Tr_{S_k(p)}T_m'\circ W_p} \\ 
        &=\frac{\mathbbm{1}_{m=\square}}{\sqrt{m}}\frac{k-1}{24}(p+1)\pm \frac{(-1)^{k/2}}{4\sqrt{m}}H(4mp)+O_m(1)
    \end{align*}
    as desired. The proof for the newspace is identical.
\end{proof}


\section{Asymptotic behavior of a family of class numbers} \label{sec:class_number_lemma}

The goal of this appendix is to prove the following proposition. The idea behind this kind of result is standard; for example see \cite{bateman-chowla-erdos}. However, we are not aware of any reference in the literature to the result itself, so we write down the details of the proof.
\begin{proposition}\label{prop:counterexample-prop}
    Fix $m\geq 1$. Then there exists an infinite family of primes $p$ such that 
    \[ H(4mp) \gg_m \sqrt{p} \log\log p,  \]
    and an infinite family of primes $p$ such that
    \[ H(4mp) \ll_m   \frac{\sqrt{p}}{\log \log p}. \]
\end{proposition}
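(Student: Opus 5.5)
The plan is to reduce $H(4mp)$ to a value of a Dirichlet $L$-function at $s=1$, and then choose primes $p$ that fix the Kronecker symbol at every small prime.

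\textbf{Reduction.} For $p \nmid 2m$ (discarding finitely many $p$), write $-4mp = D f^2$, where $D$ is a fundamental discriminant and $f$ is the conductor. Then $f \mid 2m'$ for the largest $m'$ with $m'^2 \mid m$, so $f$ takes only finitely many values depending on $m$. The Hurwitz class number is
\[
H(4mp)=\sum_{d\mid f} \frac{h(Dd^2)}{w(Dd^2)/2}.
\]
Each term is, up to a factor bounded in terms of $m$, equal to $h(D)$, via the standard formula relating $h(Dd^2)$ to $h(D)$. That factor is $d\prod_{\ell\mid d}(1-\chi_D(\ell)/\ell)$, which is positive and bounded above and below in terms of $m$. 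So $H(4mp)\asymp_m h(D)$, and by the class number formula $h(D)\asymp \sqrt{|D|}\,L(1,\chi_D)\asymp_m \sqrt{p}\,L(1,\chi_D)$.

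\textbf{Controlling $L(1,\chi_D)$.} It suffices to find infinitely many $p$ with $L(1,\chi_D)\gg_m \log\log p$, and infinitely many with $L(1,\chi_D)\ll_m 1/\log\log p$. For $\ell\nmid 2mp$, quadratic reciprocity writes $\chi_D(\ell)$ as a character in $p$ modulo $4m\ell$. So by the Chinese remainder theorem, for any $y$ one can impose a congruence on $p$ modulo $M_y=8m\prod_{\ell\le y,\,\ell\nmid 2m}\ell$ forcing $\chi_D(\ell)=+1$ for all such $\ell\le y$, or alternatively $\chi_D(\ell)=-1$ for all of them. Each condition is nonempty because each prime imposes an independent, nontrivial character condition.

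\textbf{Main obstacle.} The hard step is showing that the Euler product truncated at $y$ approximates $L(1,\chi_D)$ for a suitable such $p$, and that $p$ is not too large. I would use Linnik's theorem, which gives a prime $p \le M_y^{L}=e^{O(y)}$ in the desired residue class, so that $\log\log p \le \log y + O(1)$. I would pair this with the standard approximation
\[
L(1,\chi)=\prod_{\ell\le (\log q)^{A}}(1-\chi(\ell)/\ell)^{-1}\,(1+o(1)),
\]
valid for all $\chi$ of conductor $q$ outside a sparse exceptional set. That exceptional set comes from zero-density estimates, as in Granville--Soundararajan or the Bateman--Chowla--Erd\H{o}s method.

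Linnik alone does not avoid the exceptional moduli. So I would instead count primes $p\le x$ in the residue class modulo $M_y$, with $y=c\log x$, using Bombieri--Vinogradov or a large-sieve argument. I would then discard the $o(\pi(x)/\varphi(M_y))$ primes for which the short Euler product fails to approximate $L(1,\chi_D)$, and also handle the primes between $y$ and $(\log p)^A$.

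Mertens' theorem then gives $\prod_{\ell\le y}(1\mp1/\ell)^{\mp1}\asymp (\log y)^{\pm1}\asymp(\log\log p)^{\pm1}$. Taking $x\to\infty$ produces both infinite families. The finitely many primes $\ell\mid 2m$ contribute only a factor bounded in terms of $m$.
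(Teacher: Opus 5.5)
Your reduction $H(4mp)\asymp_m\sqrt p\,L(1,\chi_D)$, the congruence conditions modulo $M_y$, and the Mertens computation all work. The tail $y<\ell\le(\log Q)^{A}$ is also harmless, contributing $\asymp 1$ because $\log Q\asymp y$. The gap is in the step you flag as the main obstacle: finding a prime in the prescribed classes whose character avoids the exceptional set. Your proposed fix does not work. Bombieri--Vinogradov only controls the error on average over moduli $q\le x^{1/2-o(1)}$. For the single modulus $M_y=x^{c+o(1)}$ it gives an error of size $x/(\log x)^A$, which swamps the main term $\pi(x)/\varphi(M_y)\approx x^{1-c}$. The large sieve gives upper bounds, not lower bounds. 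No unconditional asymptotic $\pi(x;M_y,a)\sim\pi(x)/\varphi(M_y)$ is known for one fixed modulus of size $x^{c}$, a possible Siegel zero being exactly the obstruction. So the count from which you planned to discard the exceptional primes is not available.

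The missing idea is that Linnik's theorem does suffice once you count residue classes rather than primes in one class. This is the paper's argument. The conditions $\chi_D(\ell)=\pm1$ for all $\ell\le y$, $\ell\nmid 4m$ (with $p\equiv 1\pmod{4m}$) are met by $\prod_{\ell\le y,\,\ell\nmid 4m}\frac{\ell-1}{2}=e^{(1+o(1))y}$ distinct reduced classes modulo $M_y$. Apply Linnik separately to each class; Heath-Brown's exponent gives $p\le M_y^{6}$ for large $y$. The resulting primes are pairwise distinct, so they give $e^{(1+o(1))y}$ distinct primitive characters $\chi_{\Delta p}$, each of conductor at most $Q\ll_m M_y^{6}$. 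Granville--Soundararajan allow at most $Q^{2/13}=e^{(12/13+o(1))y}$ exceptional characters of conductor $\le Q$, which is fewer. By pigeonhole, at least one of your Linnik primes has $L(1,\chi_{\Delta p})$ approximated by the Euler product up to $(\log Q)^{14}$, and it satisfies $\log\log p\le \log y+O(1)$, as you need. Your counting version could alternatively be rescued by a quantitative form of Linnik's theorem, giving roughly $x/(\varphi(q)\sqrt{q}\log x)$ primes up to $x\ge q^{L}$ in every class, but not by Bombieri--Vinogradov.
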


The following lemma reduces the proof of Proposition \ref{prop:counterexample-prop} to estimating the $L$-values $L(1,\chi)$. This result comes from \cite[second displayed equation of the paper]{williams} (which relates $H(4mp)$ to $h(\Delta p)$) and the Dirichlet class number formula (which relates $h(\Delta p)$ to $L(1, \chi_{\Delta p})$).
\begin{lemma} \label{lem:H-classno_to_L-1-chi}
    Fix $m \ge 1$ and write $-4m = \Delta F^2$, where $\Delta$ is a negative fundamental discriminant. Then for primes $p \equiv 1 \pmod{4m}$, we have
    \begin{align}
        H(4mp) \asymp_m h(\Delta p) \asymp_m \sqrt p\, L(1, \chi_{\Delta p}).
    \end{align}
    Note that the characters $\chi_{\Delta p}$ here are primitive.
\end{lemma}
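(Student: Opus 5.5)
The plan is to prove the two comparisons separately. First I show that $\Delta p$ is a negative fundamental discriminant. Then I compare $H(4mp)$ with $h(\Delta p)$ using the standard decomposition of the Hurwitz class number over orders, and finally I pass from $h(\Delta p)$ to $L(1,\chi_{\Delta p})$ via the Dirichlet class number formula. The implied constants may depend on $m$, hence on $\Delta$ and $F$.

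\textbf{Step 1: $\Delta p$ is fundamental.} Since $p\equiv 1 \pmod{4m}$, we have $p\nmid 4m$, and $\Delta\mid 4m$, so $(p,\Delta)=1$. Also $p\equiv 1\pmod 4$, so $p$ is itself a (positive) fundamental discriminant. The product of two coprime fundamental discriminants is fundamental, so $\Delta p<0$ is a fundamental discriminant. Consequently $\chi_{\Delta p}=\left(\frac{\Delta p}{\cdot}\right)$ is a primitive quadratic character of conductor $|\Delta|p$, which is the final remark of the lemma. Moreover $-4mp=(\Delta p)F^2$, and $p\ge 5$ gives $|\Delta p|>4$, so $w(\Delta p)=2$.

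\textbf{Step 2: $H(4mp)\asymp_m h(\Delta p)$.} I would use $H(n)=\sum_{d}h(-n/d^2)/\big(w(-n/d^2)/2\big)$, where the sum runs over $d$ with $d^2\mid n$ and $-n/d^2\equiv 0,1\pmod 4$. Since $\Delta p$ is fundamental and $-4mp=\Delta p F^2$, the admissible $d$ are exactly $d=F/f$ with $f\mid F$. The corresponding discriminants are $\Delta p f^2$. Here $p$ divides $4mp$ only once, so no $p$ enters $f$.

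The class number formula for orders gives
\[
h(\Delta p f^2)=\frac{h(\Delta p)\, f}{[\mathcal O^\times:\mathcal O_f^\times]}\prod_{q\mid f}\Big(1-\frac{\chi_{\Delta p}(q)}{q}\Big),
\]
and the unit index equals $1$ because $w(\Delta p)=2$. Every term is therefore nonnegative. The $f=1$ term is exactly $h(\Delta p)$, so $H(4mp)\ge h(\Delta p)$. Each term is also at most $h(\Delta p)\, f\prod_{q\mid f}(1+1/q)$, which gives $H(4mp)\le h(\Delta p)\sum_{f\mid F}\psi(f)\ll_m h(\Delta p)$. This recovers the relation from \cite{williams}.

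\textbf{Step 3: $h(\Delta p)\asymp_m \sqrt p\,L(1,\chi_{\Delta p})$.} For a fundamental discriminant $D<-4$, Dirichlet's formula gives $h(D)=\sqrt{|D|}\,L(1,\chi_D)/\pi$. Applying this with $D=\Delta p$ gives $h(\Delta p)=\pi^{-1}\sqrt{|\Delta|}\,\sqrt p\,L(1,\chi_{\Delta p})$, and $\sqrt{|\Delta|}\asymp_m 1$.

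The only delicate point is the bookkeeping in Step 2. It needs care in three places: identifying exactly which $d$ contribute, confirming that $p\nmid f$, and checking that the unit-index factors are trivial, which is why $p\ge5$ matters. Everything else is routine.
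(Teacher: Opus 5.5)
Your proposal is correct and follows the same route as the paper. The paper cites Williams for $H(4mp)\asymp_m h(\Delta p)$ and Dirichlet's class number formula for $h(\Delta p)\asymp_m \sqrt p\,L(1,\chi_{\Delta p})$. Your Step 2 simply writes out the standard decomposition over orders behind that cited relation, and your Step 1 checks that $\Delta p$ is fundamental with $w(\Delta p)=2$, which justifies the primitivity remark.
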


Next, Lemma \ref{lem:estimate-L-1-chi} shows that most values of $L(1,\chi)$ can be approximated by the first few terms of the corresponding Euler product. This result comes from the second half of \cite[Proposition 2.2]{Granville-Sound} at $A=14$. Here, and throughout the rest of this section, a product over $\ell$ means that the product is taken over primes $\ell$.
\begin{lemma}
    \label{lem:estimate-L-1-chi}
    As $Q \to \infty$, we have that
    \begin{align}
        L(1,\chi) \sim \prod_{\ell \le (\log Q)^{14}} \lrp{1 - \frac{\chi(\ell)}{\ell}}^{-1} 
    \end{align}
    for all but at most $Q^{2/13}$ primitive characters of conductor $\le Q$.
\end{lemma}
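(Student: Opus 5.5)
The statement is essentially the second half of \cite[Proposition 2.2]{Granville-Sound} specialized to $A=14$, so the plan is to reduce to that result and check the specialization. For completeness, here is the argument behind it, in the form I would follow. Write $y=(\log Q)^{A}$ with $A=14$. The goal is to show that for a primitive $\chi$ of conductor $q\le Q$,
\[
\log L(1,\chi)=\sum_{\ell\le y}-\log\lrp{1-\frac{\chi(\ell)}{\ell}}+o(1),
\]
provided $L(s,\chi)$ has no zeros in a suitable region near $s=1$. Exponentiating then gives the asymptotic $\sim$ in the statement.

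\textbf{Step 1 (analytic approximation).} Fix a zero-free rectangle
\[
\mathcal R=\lrb{1-\delta,1}\times\lrb{-T,T},
\]
with $\delta$ of size about $1/A$ (comparable to $1-\frac{1}{2A}\cdot$const) and $T$ a power of $\log Q$, such that $L(s,\chi)$ is assumed to have no zeros in $\mathcal R$. Then apply a smoothed Perron formula to
\[
-\frac{L'}{L}(s,\chi)=\sum_n \frac{\Lambda(n)\chi(n)}{n^{s}},
\]
integrated against a weight that cuts off at $y$. Shift the contour to $\Re s=1-\delta/2$. In the zero-free region, the Borel--Carath\'eodory and Hadamard three-circles arguments give $\log L(s,\chi)\ll \log Q$ on the shifted line. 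The shifted integral is then bounded by $y^{-\delta/2}\log Q$, which is $o(1)$ because
\[
y^{-\delta/2}=(\log Q)^{-A\delta/2}
\]
beats $\log Q$ once $A\delta>2$. The prime powers $\ell^j$ with $j\ge2$, and the primes that contribute only partially through the smoothing, contribute $o(1)$ by trivial bounds. This yields the displayed approximation.

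\textbf{Step 2 (counting exceptions).} The characters not covered by Step 1 are those for which some $L(s,\chi)$ with conductor $\le Q$ has a zero in $\mathcal R$. To count them I would invoke a log-free zero-density estimate of large-sieve type (Montgomery's, or the explicit form used by Granville--Soundararajan):
\[
\sum_{q\le Q}\ \sideset{}{^*}\sum_{\chi\bmod q} N(\sigma,T,\chi)\ll (Q^2T)^{c(1-\sigma)}.
\]
Applied at $\sigma=1-\delta$, this bounds the number of bad characters by $Q^{2c\delta+o(1)}$. Choosing $\delta$ so that $2c\delta<2/13$, while keeping $A\delta>2$, forces $A$ to be large. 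This is exactly why the value $A=14$ appears, and why the exponent is $2/13$.

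The main obstacle is this parameter balancing in Step 2. One needs the density constant $c$ explicit enough to make both $A\delta>2$ and $2c\delta<2/13$ hold with $A=14$. Here I would simply rely on the explicit bookkeeping in \cite{Granville-Sound} rather than redo it, and verify only that their choice $A=14$ gives the exponent $2/13$.
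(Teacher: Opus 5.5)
Your actual proof is the same as the paper's: invoke the second half of \cite[Proposition 2.2]{Granville-Sound} with $A=14$. The paper gives no argument beyond that citation. The sketch you attach ``for completeness'', however, is not a valid reconstruction, and it misattributes the constants.

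Take your own constraints. At $A=14$, the condition $A\delta>2$ forces $\delta>1/7$. Then $2c\delta<2/13$ forces $c<7/13$ in a bound $\sum_{q\le Q}\sum^{*}_{\chi} N(\sigma,T,\chi)\ll (Q^2T)^{c(1-\sigma)}$. Nothing of that strength is known for this family; the density hypothesis itself corresponds to $c=2$. So Steps 1--2 as written cannot yield $Q^{2/13}$, and this balancing is not ``why $A=14$ appears.'' Separately, shifting the contour to $\Re s=1-\delta/2$ is wasteful. The Granville--Soundararajan approximation lemma moves only to $\sigma_0+1/\log y$, at the cost of a factor $(\log y)^2$, and so it needs $A\delta>1$ rather than $A\delta>2$.

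The constants have a simpler origin. Granville--Soundararajan state their result for a general fixed parameter $A$, with an exceptional-set exponent that decreases as $A$ grows. Neither $A=14$ nor $2/13$ comes from their bookkeeping; both are choices made in this paper. The only specialization to check is that their exceptional-set bound at $A=14$ is at most $Q^{2/13}$ once $Q$ is large.

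The exponent has to be below $1/6$ because of how the lemma is used next. There the primes in $P_x$ come from Linnik's theorem with $p\le M_x^6$, so $Q_x\approx M_x^{6}$ and $\#P_x=Q_x^{1/6+o(1)}$. This count must exceed the number of exceptional characters, and $2/13<1/6$ guarantees that. I would present the proof as the citation plus this check, and either drop the sketch or clearly label it as heuristic.
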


In light of Lemma \ref{lem:H-classno_to_L-1-chi}, the following lemma then implies the desired result Proposition \ref{prop:counterexample-prop}.
\begin{lemma}
    Fix $m \ge 1$ and write $-4m = \Delta F^2$, as in Lemma \ref{lem:H-classno_to_L-1-chi}. 
    Then there exist infinitely many primes $p \equiv 1 \pmod{4m}$ such that
    \begin{align}
        L(1, \chi_{\Delta p}) \gg_m \log \log p,
    \end{align}
    and infinitely many primes $p \equiv 1 \pmod{4m}$ such that
    \begin{align}
        L(1, \chi_{\Delta p}) \ll_m \frac{1}{\log \log p}.
    \end{align}
\end{lemma}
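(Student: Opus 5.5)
The plan is the standard Chebotarev/Dirichlet construction: force $\chi_{\Delta p}(\ell)$ for all small primes $\ell$ by imposing congruence conditions on $p$, then use Lemma \ref{lem:estimate-L-1-chi} to see that $L(1,\chi_{\Delta p})$ is governed by the truncated Euler product. The basic observation is this. For an odd prime $\ell \nmid 2m$, quadratic reciprocity for the Kronecker symbol gives $\chi_{\Delta p}(\ell) = \chi_\Delta(\ell)\left(\frac{p}{\ell}\right)$. Here we use that $p \equiv 1 \pmod{4}$, so that $\left(\frac{\ell}{p}\right)=\left(\frac{p}{\ell}\right)$. Hence, once $\ell$ is fixed, the value $\chi_{\Delta p}(\ell)$ is determined by the residue of $p$ mod $\ell$. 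For $\ell \mid 2m$ the value is fixed by the condition $p\equiv 1 \pmod{4m}$ (up to finitely many $\ell$, contributing $O_m(1)$ factors).

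Now let $y$ be a parameter and set $M_y = 4m\prod_{\ell \le y,\, \ell\nmid 2m} \ell$. For each odd $\ell \le y$ with $\ell\nmid m$ we choose a residue class $a_\ell$ mod $\ell$: for the large family we take $\left(\frac{a_\ell}{\ell}\right)=\chi_\Delta(\ell)$, so that $\chi_{\Delta p}(\ell)=1$; for the small family we take $\left(\frac{a_\ell}{\ell}\right)=-\chi_\Delta(\ell)$, so that $\chi_{\Delta p}(\ell)=-1$. Combined with $p\equiv1 \pmod{4m}$ via CRT, this gives a class $a$ mod $M_y$. Since $\log M_y \asymp y$ and the prime number theorem in progressions holds uniformly for moduli that are small compared with the range, there are $\gg X/(\phi(M_y)\log X)$ primes $p\le X$ in this class, say for $X = e^{e^{y}}$ or any $X$ with $\log X \ge M_y^{C}$. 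Linnik-type or Siegel–Walfisz uniformity suffices after taking $X$ doubly exponential in $y$. For those $p$ we have $\prod_{\ell\le y}(1-\chi_{\Delta p}(\ell)/\ell)^{-1} \asymp_m \log y$ in the first family and $\asymp_m (\log y)^{-1}$ in the second, by Mertens.

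The main obstacle is controlling the primes $y<\ell\le(\log Q)^{14}$ in the truncated product, together with the exceptional set. I would apply Lemma \ref{lem:estimate-L-1-chi} with $Q = 4mX$. Among our primes $p\le X$ in the class, at most $Q^{2/13}$ correspond to exceptional characters; this is negligible against the count $X/(\phi(M_y)\log X)$ since $\phi(M_y)\le e^{O(y)} = (\log X)^{O(1)}$. For the remaining $p$, $L(1,\chi_{\Delta p})$ is asymptotic to the product over $\ell \le (\log Q)^{14}$. The tail $y<\ell\le (\log Q)^{14}$ is uncontrolled in sign. Rather than bounding it crudely, which costs a factor of order $\log\log Q/\log y$ and is too large, I would choose $y$ comparable to $(\log X)^{14}$ itself, forcing all primes up to the truncation point. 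This requires $M_y\approx e^{(\log X)^{14}}$, far too big for the PNT in progressions. So the fix I would try first is an averaging argument: fix the congruence up to $y=\tfrac{1}{2}\log\log X$-scale and then bound the tail on average. Over $p$ in the progression, $\sum_{y<\ell\le(\log Q)^{14}}\chi_{\Delta p}(\ell)/\ell$ has small mean and variance $\ll \sum_{\ell>y}\ell^{-2}$ by large-sieve/character-sum orthogonality. Hence for a positive proportion of such $p$ the tail product is $\asymp 1$.

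This yields $L(1,\chi_{\Delta p})\gg \log y \gg \log\log p$ (the first family) and $L(1,\chi_{\Delta p})\ll 1/\log\log p$ (the second), with $y\asymp \log\log X$ and $p$ of size about $X$. Letting $X\to\infty$ gives infinitely many such primes in each family.
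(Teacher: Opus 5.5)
\textbf{There is a genuine gap in your final step.} In the averaging regime you are forced to take $y \ll \log\log X$. The mean/variance computation needs primes in the class $a \bmod M_y$ to be equidistributed modulo $\ell_1\ell_2$ for $\ell_1,\ell_2 \le (\log X)^{14}$. Via Siegel--Walfisz this requires $M_y\ell_1\ell_2 \le (\log X)^{O(1)}$, i.e.\ $e^{(1+o(1))y} \le (\log X)^{O(1)}$. The forced part of the Euler product is then $(\log y)^{\pm 1} \asymp (\log\log\log X)^{\pm 1}$. Since $p \asymp X$, this is $(\log\log\log p)^{\pm1}$, so the inequality $\log y \gg \log\log p$ you assert at the end is false.

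Granting the variance argument, which is fine in that range, what you actually prove is $L(1,\chi_{\Delta p}) \gg \log\log\log p$ and $\ll 1/\log\log\log p$. Reaching $\log\log p$ requires fixing $\chi_{\Delta p}(\ell)$ for all $\ell$ up to a fixed power of $\log p$, i.e.\ $\log M_y \asymp \log p$. That is the Linnik range, where there is no asymptotic count of primes in a single progression, and your averaging over $p$ in one class breaks down.

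\textbf{The missing idea is that in the Linnik range the tail takes care of itself.} The loss factor $\log\log Q/\log y$ you computed for the crude Mertens bound is $\asymp 1$ once $\log Q \asymp y$. Then $(\log Q)^{14} \asymp y^{14}$ and $\prod_{y<\ell\le(\log Q)^{14}}(1-1/\ell)^{\pm 1} \asymp 1$.

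What remains is to avoid the $Q^{2/13}$ exceptional characters. The paper does this by counting residue classes rather than primes. With $M_x = 4m\prod_{\ell\le x,\,\ell\nmid 4m}\ell$, it takes the least prime in each of the $\prod(\ell-1)/2 = e^{(1+o(1))x}$ admissible classes. By Linnik's theorem (Heath-Brown's exponent), each such prime is $\le M_x^6$. This gives $e^{(1+o(1))x}$ distinct primitive characters of conductor $\le Q_x = |\Delta| M_x^6$, which exceeds $Q_x^{2/13} = e^{(12/13+o(1))x}$. Hence some $\chi_{\Delta p_x}$ is non-exceptional, and $L(1,\chi_{\Delta p_x})^{\pm 1} \gg_m \log x \gg \log\log p_x$.

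Alternatively, you could keep a single class if you invoke a quantitative form of Linnik's theorem giving many primes $\le M_y^{C}$ in one progression.

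Your weaker $\log\log\log p$ version would still suffice for Theorem \ref{thm:counterexample}, where any unbounded growth (or decay to $0$) of $H(4mp)^2/p$ is enough. It does not, however, prove the lemma as stated.
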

\begin{proof}
    Fix $\epsilon = \pm 1$. Then it suffices to show that for sufficiently large $x$, there exists a prime $p_x \equiv 1 \pmod{4m}$ satisfying
    \begin{align}
        L(1, \chi_{\Delta p_x})^\epsilon \gg_m \log x \gg_m \log \log p_x. \label{eqn:temp-goal-L1chi}
    \end{align}
    
    For large $x$, let 
    \begin{align}
        M_x := 4m \prod_{\ell \le x,\ \ell \nmid 4m} \ell,
    \end{align}
    and note that 
    \begin{align} \label{eqn:temp-growth-log-Mx}
        \log M_x &= \log 4m + \sum_{\ell \le x,\ \ell \nmid 4m} \log \ell \ \sim\   x \qquad \text{(by PNT)}.
    \end{align}
    Then consider the reduced equivalence classes $r \!\pmod{M_x}$ satisfying the congruence conditions
    \begin{align} \label{eqn:residue-equivalences}
        r \equiv 1 \pmod{4m} \qquad \text{and} \qquad \lrp{\frac{r}{\ell}} &= \epsilon \chi_\Delta(\ell) \quad\text{for } \ell \le x, ~ \ell \nmid 4m. 
    \end{align}
    For each of these reduced equivalence classes $r \!\pmod{M_x}$, choose the smallest prime $p$ such that $p \equiv r \pmod{M_x}$, and let $P_x$ denote the set of such primes. Note that the size of $P_x$ (i.e. the number of such reduced residue classes $r \!\pmod{M_x}$) is given by $\# P_x = \prod_{\ell \le x,\ \ell \nmid 4m} \frac{\ell-1}{2}$, since $\chi_\Delta(\ell)= \pm 1$ for each $\ell \nmid 4m$. Moreover, for sufficiently large $x$, we have by Linnik's theorem that $p \le M_x^6$ for each $p \in P_x$ \cite{heath-brown}.
    
    These primes  $p \in P_x$ then yield $\# P_x$ primitive characters of the form $\chi_{\Delta p}$, each of conductor $\mathrm{cond}(\chi_{\Delta p}) = |\Delta|p \le Q_x := |\Delta| M_x^6$. Also, note that $\# P_x > Q_x^{2/13}$ for sufficiently large $x$ since
    \begin{align}
        \log \# P_x &= \log \prod_{\ell \le x,\ \ell \nmid 4m} \frac{\ell-1}{2} =\sum_{\ell \le x,\ \ell \nmid 4m} \lrp{\log \ell+ \log \frac{\ell-1}{2\ell}} \ \sim\  x \qquad \text{(by PNT)},
        \\
        \log Q_x^{2/13} 
        &= \frac{2}{13} \log |\Delta| + \frac{12}{13} \log M_x \ \sim \ \frac{12}{13} x \qquad \text{(by \eqref{eqn:temp-growth-log-Mx})}. \label{eqn:growth-of-Q^2/13}
    \end{align}
    Hence by Lemma \ref{lem:estimate-L-1-chi}, there must exist at least one prime in $P_x$, say $p_x$, such that 
    \begin{align}
        L(1, \chi_{\Delta p_x}) \sim \prod_{\ell \le (\log Q_x)^{14}} \lrp{1 - \frac{\chi_{\Delta p_x}(\ell)}{\ell}}^{-1}.
    \end{align}

    Now, note from \eqref{eqn:growth-of-Q^2/13} that $(\log Q_x)^{14} > x$ for sufficiently large $x$. Hence
    \begin{align}
        L(1,\chi_{\Delta p_x})^\epsilon 
        &\sim \prod_{\ell \le (\log Q_x)^{14}} \lrp{1 - \frac{\chi_{\Delta p_x}(\ell)}{\ell}}^{-\epsilon} \\
        &= 
        \prod_{\ell \le x} \lrp{1 - \frac{\chi_{\Delta p_x}(\ell)}{\ell}}^{-\epsilon} 
        \cdot 
        \prod_{x <\ell \le (\log Q_x)^{14}} \lrp{1 - \frac{\chi_{\Delta p_x}(\ell)}{\ell}}^{-\epsilon} 
        \hspace{-30mm}\\
        &\gg_m \prod_{\ell \le x} \lrp{1 - \frac{\epsilon}{\ell}}^{-\epsilon}
        \cdot 
        \prod_{x <\ell \le (\log Q_x)^{14}} \lrp{1 - \frac{1}{\ell}} \\
        & &\hspace{-70mm}
        \Big(\text{since $\chi_{\Delta p_x}(\ell) = \Big(\frac{\Delta}{\ell}\Big) \cdot \Big(\frac{p_x}{\ell}\Big) = \epsilon$ for $\ell \le x, \ell \nmid 4m$ by \eqref{eqn:residue-equivalences}}\Big)
        \\
        &\asymp_m \log x \cdot \frac{\log x}{\log\!\big((\log Q_x)^{14}\big)}
        &\hspace{-20mm}\text{(by Mertens' third theorem)}
        \\
        &\asymp_m \log x 
        &\text{(by \eqref{eqn:growth-of-Q^2/13})} \\
        &\asymp_m \log \log M_x^6 
        &\text{(by \eqref{eqn:temp-growth-log-Mx})} \\
        &\ge \log \log p_x,
    \end{align}
    verifying \eqref{eqn:temp-goal-L1chi}. This completes the proof.
\end{proof}


\bibliographystyle{plain}
\bibliography{bibliography.bib}

\end{document}